\documentclass[a4paper,10pt]{amsart}

% Use the option doublespacing or reviewcopy to obtain double line spacing
% \documentclass[doublespacing]{elsart}

% if you use PostScript figures in your article
% use the graphics package for simple commands
% \usepackage{graphics}
% or use the graphicx package for more complicated commands
\usepackage{graphicx}
% or use the epsfig package if you prefer to use the old commands
%\usepackage{epsfig}

% The amssymb package provides various useful mathematical symbols
\usepackage{amssymb,amsmath,amsthm}
\usepackage{color}
\usepackage{mathrsfs} 
\usepackage{amssymb}
\usepackage{amsfonts}
\usepackage{bm}
\usepackage{fullpage}
\usepackage{color}
\usepackage{tikz}
\usepackage{framed}
\usepackage{graphicx}
\usepackage{url}
\usepackage{xcolor}
\usepackage{ marvosym }
\usepackage[skip=1em,font=small]{caption}
\usepackage{subcaption}
\usepackage[all]{xy}
\usepackage{listings}
\usepackage{multicol}

%%%%%%%%%%%%%%%%%%%%%%%%%%%%%%
%%%%%%%%%%%%%%%%%%%%%%%%%%%%%%
%%%%%%%%%%%%%%%%%%%%%%%%%%%%%%

\hyphenation{pa-ra-me-tri-zed pa-ra-me-tri-za-tion pa-ra-me-tri-za-tions ho-mo-ge-ne-ous sy-zy-gy sy-zy-gies bi-ho-mo-ge-ne-ous pa-ra-me-tri-zes}

\newcommand\dto{\dashrightarrow}

\def\leq{\leqslant}
\def\geq{\geqslant}

\newcommand{\blue}[1]{#1}
\newcommand{\red}[1]{#1}

%%%%%%%%%%%%%%%%%%%%%%%%%%%%%%
%%%%%%%%%%%%%%%%%%%%%%%%%%%%%%
%%%%%%%%%%%%%%%%%%%%%%%%%%%%%%

%\theoremstyle{plain}
\newtheorem{thm}{Theorem}
\newtheorem{cor}[thm]{Corollary}
\newtheorem{lem}[thm]{Lemma}

\newtheorem{prop}[thm]{Proposition}

\newtheorem{defn}[thm]{Definition}
\newtheorem{nt}[thm]{Notation}
\newtheorem{rem}[thm]{Remark}

\newtheorem{exmp}{Example}
%%%%%%%%%%%%%%%%%%%%%%%%%%%%%%
%Renumbering of footnotes

%%%%%%%%%%%%%%%%%%%%%%%%%%%%%%
%%%%%%%%%%%%%%%%%%%%%%%%%%%%%%

\DeclareMathOperator\Proj{Proj}

\newcommand\ZZ{\mathbb Z}
\newcommand\NN{\mathbb N}
\newcommand\MM{\mathbb M}
\newcommand\PP{\mathbb P}
\newcommand\TT{\mathrm T}

\newcommand\Sc{\mathcal S}
\newcommand\Rc{\mathcal R}

\newcommand\Zc{\mathcal Z}
\newcommand\Cc{\mathcal C}
\newcommand\Oc{\mathcal O}
\newcommand\Fk{\mathfrak F}
\newcommand\Lk{\mathfrak L}

\def\EE{\mathbb{E}}

\def\QQ{\mathbb{Q}}
\def\RR{\mathbb{R}}
\def\PP{\mathbb{P}}

\def\ux{{\xi}}

\def\dg{{\mathbf{d}}}
\def\mug{{\bm{\mu}}}
\def\nug{{\bm{\nu}}}
\def\ag{{\bm{a}}}

\def\mg{{\bm{m}}}

\def\Bc{{\mathcal{B}}}
\def\t{{\overline{t}}}
\def\u{{\overline{u}}}
\def\v{{\overline{v}}}

\def\pp{{\mathfrak{p}}}

\def\coker{{\mathrm{coker}}}

\def\corank{{\mathrm{corank}}}

\def\Rees{{\mathrm{Rees}}}
\def\Sym{{\mathrm{Sym}}}

\def\coker{{\mathrm{coker}}}

\def\ext{{\mathrm{Ext}}}

\def\corank{{\mathrm{corank}}}

\def\Rees{{\mathrm{Rees}}}
\def\Sym{{\mathrm{Sym}}}

\def\Supp{{\mathrm{Supp}}}

\def\CcB{{\breve{\mathscr{C}}}_B}
\def\ker{{\mathrm{ker}}}
\def\coker{{\mathrm{coker}}}

\def\CcB{{\breve{\mathscr{C}}}_B}
\def\ker{{\mathrm{ker}}}
\def\coker{{\mathrm{coker}}}

\newcommand\quotient[2]{
        \mathchoice
            {% \displaystyle
                \text{\raise1ex\hbox{$#1$}\big/\lower1ex\hbox{$#2$}}%
            }
            {% \textstyle
                #1\,/\,#2
            }
            {% \scriptstyle
                #1\,/\,#2
            }
            {% \scriptscriptstyle  
                #1\,/\,#2
            }
    }

%%%%%%%%%%%%%%%%%%%%%%%%%%%%%%
%%%%%%%%%%%%%%%%%%%%%%%%%%%%%%
%%%%%%%%%%%%%%%%%%%%%%%%%%%%%%

\begin{document}

\title[Fibers of rational maps and orthogonal projection onto surfaces]{Fibers of multi-graded rational maps and orthogonal projection onto rational surfaces}

%\author{Nicol\`as Botbol, Laurent Bus\'e, Marc Chardin, Fatmanur Yildirim}

\author{Nicol\'{a}s Botbol}
\address{Departamento de Matem\'atica, FCEN, Universidad de Buenos Aires, Argentina }
\email{nbotbol@dm.uba.ar}

\author{Laurent Bus\'e}
\address{Universit\'e C\^{o}te d'Azur, Inria,  2004 route des Lucioles, 06902 Sophia Antipolis, France}
\email{laurent.buse@inria.fr }

\author{Marc Chardin}
\address{Institut de Math\'ematiques de Jussieu and Universit\'e Sorbonne,  4 place Jussieu,
75005 Paris, France}
\email{marc.chardin@imj-prg.fr}

\author{Fatmanur Yildirim}
\address{Universit\'e C\^{o}te d'Azur, Inria,  2004 route des Lucioles, 06902 Sophia Antipolis, France}
\email{fatma.yildirim@inria.fr}

\begin{abstract}
We contribute a new algebraic method for computing the orthogonal projections of a point onto a rational algebraic surface embedded in \blue{the} three dimensional projective space. This problem is first turned into the computation of the finite fibers of a generically finite dominant rational map: a congruence of normal lines to the rational surface. Then, an in-depth study of certain syzygy modules associated to such a congruence is presented and applied to build elimination matrices that provide universal representations of its finite fibers, under some genericity assumptions. These matrices depend linearly in the variables of the three dimensional space. They can be pre-computed so that the orthogonal projections of points are approximately computed by means of fast and robust numerical linear algebra calculations.
\end{abstract}

\keywords{multi-graded rational maps, syzygies, elimination matrices, orthogonal projection, rational surfaces.}
\subjclass[2010]{Primary: 14E05, Secondary: 13D02, 13P25, 13D45.}

\maketitle

\section{Introduction}\label{intro}

The computation of the distance between a 3D point and a parameterized algebraic surface has attracted a lot of interest, notably in the field of geometric modeling where B\'ezier surfaces, which are pieces of rational algebraic surfaces, are intensively used to describe geometric objects. This distance computation is very important in many applications such as surface construction, collision detection, simulation, or B-spline surface fitting (see for instance~\cite{NURBS,SJKW02,PoLe03}). It is commonly turned into a root-finding problem by considering the critical locus of the squared distance function from a test point $p$ to the parameterized surface. More precisely, if $p=(x_1,x_2,x_3)\in \RR^3$ and $\phi(u,v)=(\phi_1(u,v),\phi_2(u,v),\phi_3(u,v)) \in \RR^3$ is a  polynomial parameterization of a surface $\Sc$, then we need to find the common roots of the two partial derivatives with respect to $u$ and $v$ of the squared distance function
$$D_p(u,v):=\sum_{i=1}^3(\phi_i(u,v)-x_i)^2.$$
The roots of this polynomial system are called the orthogonal projections of the point $p$ onto the surface $\Sc$. Various types of practical algorithms to compute them have been proposed, most of them based on iterative subdivision methods, sometimes combined with Newton iterations for the computation of local extrema (see \cite{KS14} for a detailed survey). The number of roots of this polynomial system has also been recently studied with a larger spectrum of applications (after homogenization and counting properly multiplicities); it is known as the Euclidean distance degree of the surface $\Sc$ \cite{Draisma2015}. 

In this paper, we introduce a new approach, based on algebraic methods and elimination matrices, to compute the orthogonal projections of a point onto a rational algebraic surface in $\PP^3$. Starting with a bivariate parameterization $\phi$ of a rational surface $\Sc \subset \RR^3$, we consider a congruence of normal lines to $\Sc$, which is a trivariate parameterization. After homogenization of this congruence, denoted by $\Psi$, the syzygies of the ideal generated by the defining polynomials of $\Psi$ are used to build a family of matrices defined over $\PP^3$. These matrices have the property that their cokernels at a given point $p\in \PP^3$ are in close relation with the pre-images of $p$ via $\Psi$, hence with the orthogonal projections of $p$. Indeed, above a certain threshold, the dimension of the cokernels of these matrices evaluated at $p$ are equal to the number of pre-images of $p$ via $\Psi$, counted properly. Moreover the coordinates of those pre-images can be computed from a basis of these cokernels. 

\medskip

Algebraic methods to orthogonally project points onto rational algebraic surfaces already appeared in the literature \cite{La04,Draisma2015}, including by means of congruence of normal lines \cite{SJKW02,TJD05}, but they are facing computational efficiency problems very quickly as the defining degree of surface parameterizations is increasing, because of the intrinsic complexity of the problem. A good measure of this complexity is provided by the Euclidean distance degree introduced in \cite{Draisma2015} (see also \cite{JP14}); for instance, in general a point has 94 orthogonal projections onto a rational bi-cubic surface (a surface in $\PP^3$ parameterized over $\PP^1\times\PP^1$ by bi-homogeneous polynomials of bi-degree $(3,3)$). In order to push these limits, our approach introduces a preprocessing step in which an elimination matrix dedicated to a given rational surface, and depending linearly in the space coordinates, is built. The effective computation of the orthogonal projections of a point $p$ on this surface is then highly accelerated \blue{in comparison to other methods without preprocessing step (e.g.~\cite{TJD05}),} since it consists in the instantiation of this elimination matrix at $p$ and  the use of fast and robust numerical linear algebra methods, such as singular value decompositions, eigenvalue and eigenvector numerical calculations.

The methodology we develop in this paper is based on matrix representations of rational maps and their fibers. These representations have already been studied from a theoretical point of view in several contexts, see e.g.~\cite{BOTBOL2011381,BBC13,BD15,BDD,BL10,BJ,BDissac,BCD03},  and they have also been used for applications such as implicitization \cite{SC}, intersection problems and visualization via ray tracing \cite{Bus14}, isotropic triangular meshing of NURBS surfaces \cite{Shen:2016:LNS:3045889.3064444}, or more recently for isogeometric design and analysis of lattice-skin structures \cite{XiSaCi19,XiBuCi19}. \blue{Indeed, they provide an interesting alternative to the more classical resultant-based matrices that are very sensitive to the presence of base points and hence often inoperative, as for the computation of the orthogonal projections of a point on a rational surface we are considering in this paper.} Roughly, \blue{these representations} correspond to a presentation matrix of certain graded slices of the symmetric algebra of the ideal $I$ generated by the defining equations of the map under consideration. The determination of the appropriate graded slices is the main difficulty in this approach and it requires a thorough analysis of the syzygy modules of $I$. In this paper, guided by our application to orthogonal projection onto  rational surfaces, we consider trivariate maps whose source space is either equal to $\PP^2\times \PP^1$, a bi-graded algebraic structure, or $\PP^1\times \PP^1 \times \PP^1$, a tri-graded algebraic structure. In addition we also need to consider maps that have a one-dimensional base locus, because general congruences of normal lines to rational surfaces have positive dimensional base loci. This latter requirement is definitely the more challenging one. \blue{To the best of our knowledge}, all the previous published related works only considered rational maps with a base locus of dimension at most zero, i.e.~the base locus consists in finitely many points or is empty, with the exception of \cite{Cha06}. Our main results are  Theorem \ref{thm:regstabilization} and Theorem \ref{thm:mainresidual}. They provide the expected matrix representations under some assumptions on the curve component of the base locus, namely either being  globally generated by three linear combinations of the four defining equations of $I$  up to saturation, or either being a complete intersection. Coming back to our application, these theorems provide the theoretical foundations of a new methodology for computing orthogonal projections onto a rational algebraic surface (see e.g.~Example \ref{ex:segre}).

\medskip

The paper is organized as follows. In Section \ref{sec:surface} we introduce congruences of normal lines to a rational surface and explain why it is useful for computing orthogonal projections of points. Then, in Section \ref{sec:mrepsurface} the matrix representations of these congruences of normal lines are defined and the main results of this paper are stated. Their proofs require a fine analysis of the vanishing of certain local cohomology modules which is presented in Section \ref{sec:proofmainthm}.  In Section \ref{sec:secRing} we give some technical results on global sections of curves in a product of projective spaces in order to shed light on some assumptions that appear in  Theorem \ref{thm:regstabilization} and Theorem \ref{thm:mainresidual}. Finally, Section \ref{sec:Computation} is devoted to the description of an algorithm for computing the orthogonal projections of a point onto a rational surface which is parameterized by either $\PP^2$ (triangular surface) or $\PP^1\times\PP^1$ (tensor-product surface). 

\section{Congruence of normal lines to a rational surface}\label{sec:surface}

In this section, we introduce congruences of normal lines to a rational surface $\Sc$, i.e.~parameterizations of the 2-dimensional family of normal lines to $\Sc$. Given a point $p$ in space, it allows us to translate the computation of the orthogonal projections of $p$ onto $\Sc$ as the computation of the pre-images of $p$ via these congruences. In order to use algebraic methods, in particular elimination techniques, we first describe the homogenization of these congruence maps, as well as their base loci, for two classes of rational surfaces that are widely used in CAGD: triangular and tensor-product rational surfaces.  

\subsection{Congruences of normal lines}{}

\blue{We assume that we are given} the following affine parameterization of a rational surface $\Sc$ in the three dimensional space
\begin{equation}\label{eq:phiuvsurf}
\begin{array}{llll}
	\phi : & \RR^2 & \dto & \RR^3 \\
 	 & (u,v)  & \longmapsto & \left(\frac{f_1(u,v)}{f_0(u,v)}, \frac{f_2(u,v)}{f_0(u,v)},\frac{f_3(u,v)}{f_0(u,v)}\right)
\end{array}
\end{equation}
where $f_0,f_1,f_2,f_3$ are polynomials in the variables $u,v$. At each nonsingular point $p$ on $\Sc$ one can define a normal line which is the line through $p$ spanned by a normal vector $\nabla(p)$ to the tangent plane to $\Sc$ at $p$. The congruence of normal lines to $\Sc$ is then the rational map
\begin{equation}\label{eq:psiuvt}
\begin{array}{rcl}
	\psi: \RR^3 & \dto & \RR^3 \\
	(u,v,t) & \mapsto & \phi(u,v) +t\nabla(\phi(u,v)).
\end{array}	
\end{equation}
\blue{If the parameterization $\phi$ is given, then} there are many ways to formulate explicitly the above parameterization, depending on the choice of the expression of $\nabla(\phi(u,v))$. The more commonly used one is the cross product of the two  vectors $\partial {\phi}/\partial u$ and  $\partial {\phi}/\partial v$ that are linearly independent at almost all points in the image of $\phi$. Thus, we get 
$$\psi(u,v,t)=\phi(u,v)+t \cdot \frac{\partial \phi}{\partial u}\wedge \frac{\partial \phi}{\partial v}(u,v).$$
Nevertheless, with the following example we emphasize that, depending on $\phi$, a more specific and simple (for instance in terms of degree) expression of $\psi$ may be used. 

\begin{exmp} The unit sphere can be parameterized by 
	$$\phi(u,v)=\left(\frac{2u}{1+u^2+v^2},\frac{2v}{1+u^2+v^2},\frac{-1+u^2+v^2}{1+u^2+v^2}\right).$$ 
Since $\phi(u,v)$ is also a normal vector to the unit sphere at the point $\phi(u,v)$, a simpler expression than \eqref{eq:psiuvt} for the congruence of normal lines is  $\psi(u,v,t)=t\phi(u,v)$.\end{exmp}

\medskip

The main interest of the congruence of normal lines is that it allows to translate the computation of orthogonal projection onto the surface $\Sc$ as an inversion problem. More precisely, given a point $p \in \RR^3$, its orthogonal projection on $\Sc$ can be obtained from its pre-images via $\psi$. Indeed, if $\phi(u_0,v_0)$ is an orthogonal projection of $p$ on $\Sc$ for some parameters $(u_0,v_0) \in \RR^2$, then that means that $p$ belongs to the normal line to $\Sc$ at the point $\phi(u_0,v_0)$. Therefore, there exists $t_0$ such that the point $(u_0,v_0,t_0)\in \RR^3$ is a pre-image of $p$ via $\psi$.

\subsection{Homogenization to projective spaces}\label{subsec:homphi}
 
The geometric approach we propose for computing orthogonal projection of points onto a rational surface via the ``fibers'' of the corresponding congruence of normal lines relies on algebraic methods that require to work in an homogeneous setting. Thus, it is necessary to homogenize the parameterizations $\phi$ and $\psi$ defined by \eqref{eq:phiuvsurf} and \eqref{eq:psiuvt} respectively. 

\medskip

Regarding the homogenization of the map $\phi$, the canonical choice is to homogenize its source to the projective plane $\PP^2$, but there are other possible choices depending on the support of the polynomial $f_i$. We will focus on the two main classes of rational surfaces that are used in Computer-Aided Geometric Design (CAGD). The first class is called the class of  \textit{rational triangular surfaces}. It corresponds to polynomials of the form 
$$f_k(u,v)=\sum_{0\leq i,j, \ i+j\leq d} c_{k,i,j} u^iv^j, \ \ k=0,\ldots,3$$ 
where $d$ is a given positive integer, the degree of the polynomials $f_k(u,v)$. The canonical homogenization of the map $\phi$ is then of the form
$$\begin{array}{rcl}
\Phi : \PP^2 & \dto & \PP^3\\
(w:u:v) & \mapsto & (F_0:F_1:F_2:F_3)
\end{array}$$ 
where the $F_i$'s are homogeneous polynomials in $\RR[w,u,v]$ of degree $d$. If $F_0=w^d$, equivalently if $f_0(u,v)=1$ in \eqref{eq:phiuvsurf}, then one speaks of \emph{non-rational triangular surfaces}. This terminology refers to the fact that in this case that the parameterization $\phi$ is defined by polynomials and not by rational functions as in the general case.  

The second class of rational surfaces is called the class of \textit{rational tensor-product surfaces}. It corresponds to polynomials of the form 
$$f_k(u,v)=\sum_{0\leq i\leq d_1, \ 0\leq i\leq d_2} c_{k,i,j} u^iv^j, \ \ k=0,\ldots,3$$ 
where $(d_1,d_2)$ is a couple of positive integers, the bi-degree of the polynomials $f_k(u,v)$. In this case, the canonical homogenization of the map $\phi$ is of the form
$$\begin{array}{rcl}
\Phi : \PP^1\times \PP^1 & \dto & \PP^3\\
(\u:u)\times (\v:v) & \mapsto & (F_0:F_1:F_2:F_3)
\end{array}$$ 
where the $F_i$'s are here bi-homogeneous polynomials in $\RR[\u,u;\v,v]$ of bi-degree $(d_1,d_2)$. If $F_0=\u^{d_1}\v^{d_2}$, equivalently if $f_0(u,v)=1$ in \eqref{eq:phiuvsurf}, then one speaks of \emph{non-rational tensor-product surfaces}. 

\medskip

From now on we set the following notation. The map $\Phi$ is a rational map from the projective variety $X$ to $\PP^3$, where $X$ stands either for $\PP^2$ or $\PP^1\times \PP^1$. Thus, when we will use the notion of degree over $X$, it has to be understood with respect to these two possibilities, i.e.~either the single grading of $\PP^2$ or the bi-grading of $\PP^1\times\PP^1$. The homogeneous polynomials $F_0,F_1,F_2,F_3$ defining the map $\Phi$ are homogeneous polynomials in the coordinate ring of $X$ of degree $\dg$, this latter being either a positive integer or a pair of positive integers, depending on $X$. 

For the sake of completeness, in Table \ref{tab:EDdeg} we recall from \cite{Draisma2015}  the Euclidean distance degree of $\Phi$ corresponding to the four abovementioned classes of rational maps (see also \cite{JP14}). This is the number orthogonal projections of a general point not on the surface. It provides a measure of the complexity of computing the orthogonal projections of a point onto a rational surface.

\begin{table}[ht!]
	\centering	
	\renewcommand{\arraystretch}{1.5}
	\begin{tabular}{|c|c|c|}  
		\hline 
		&  Triangular surface &  Tensor-product surface \\ \hline
		Non-rational & $(2d-1)^2$ & $8d_1d_2-2(d_1+d_2)+1$  \\ \hline
		Rational & $7d^2-9d+3$ & $14d_1d_2-6(d_1+d_2)+4$ \\  \hline
	\end{tabular}
	\caption{Euclidean distance degree of non-rational and rational triangular surfaces, respectively tensor-product surfaces, of degree $d\geq1$, respectively $(d_1,d_2)\geq (1,1)$.}
	\label{tab:EDdeg}
\end{table}

\medskip

Finally, once the choice of homogenization of $\Phi$ from $X$ to $\PP^3$ is done, it is natural to homogenize $\psi$ as a rational map from $X\times \PP^1$ to $\PP^3$, which means geometrically that the congruence of normal lines to the surface is seen as a family of projective lines parameterized by $X$. This map is hence of the form 
\begin{eqnarray}\label{eq:Psi}
\Psi : X \times \PP^1 & \dto & \PP^3\\ \nonumber
\ux \times (\t:t) & \mapsto & (\Psi_0:\Psi_1:\Psi_2:\Psi_3)
\end{eqnarray} 
where the polynomials $\Psi_i$'s are bi-graded: they are graded with respect to $X$ and to $\PP^1$. Observe that these polynomials are actually linear forms with respect to $\PP^1$.

\subsection{Explicit homogeneous parameterizations}\label{subsec:homparam}

To describe the rational map $\Psi$ more explicitly, we need to consider projective tangent planes to the surface $\Sc$ and projective lines that are orthogonal to them. 

Let $\ux \in X$ and $p=\Phi(\ux)$ be a smooth point on $\Sc \subset \PP^3$. An equation of the projective tangent space to $\Sc$ at $p$, denoted $\TT_p \Sc$, can be obtained from the Jacobian matrix of the polynomials $F_0,F_1,F_2,F_3$ (see, for instance, \cite[\blue{Lecture} 14]{HarrisBook}). If $X=\PP^2$, then this equation is given by
$$
\left| 
\begin{array}{cccc}
\partial_u F_0 & \partial_u F_1 & \partial_u F_2 & \partial_u F_3  \\
\partial_v F_0 & \partial_v F_1 & \partial_v F_2 & \partial_v F_3  \\
\partial_w F_0 & \partial_w F_1 & \partial_w F_2 & \partial_w F_3  \\
x_0 & x_1 & x_2 & x_3
\end{array}
\right|
=
x_0\Delta_0(\ux)+x_1\Delta_1(\ux)+x_2\Delta_2(\ux)+x_3\Delta_3(\ux)=0.
$$
Observe that the signed minors $\Delta_i(w,u,v)$ are homogeneous polynomials of degree $D:=3(d-1)$, as the $F_i$'s are of degree $d$. Similarly, If $X=\PP^1\times \PP^1$, then an equation of $\TT_p \Sc$ is given by the vanishing of the determinant 
$$\left| 
\begin{array}{cccc}
	\partial_u F_0 & \partial_u F_1 & \partial_u F_2 & \partial_u F_3  \\
	\partial_\u F_0 & \partial_\u F_1 & \partial_\u F_2 & \partial_\u F_3  \\
	\partial_v F_0 & \partial_v F_1 & \partial_v F_2 & \partial_v F_3  \\
	x_0 & x_1 & x_2 & x_3
\end{array}
\right|=0.$$
Compared to the previous case, there is here a redundancy because two Euler equalities hold, one with respect to $(\u,u)$ and the other with respect to $(\v,v)$. Actually, this redundancy implies that the above determinant vanishes if $\v=0$. Therefore, an equation of $\TT_p \Sc$ is given by the formula
\begin{equation}\label{eq:Jactpsurf}
\left| 
\begin{array}{cccc}
\partial_u F_0 & \partial_u F_1 & \partial_u F_2 & \partial_u F_3  \\
\partial_\u F_0 & \partial_\u F_1 & \partial_\u F_2 & \partial_\u F_3  \\
\partial_v F_0 & \partial_v F_1 & \partial_v F_2 & \partial_v F_3  \\
x_0 & x_1 & x_2 & x_3
\end{array}
\right|
=
\v(x_0\Delta_0(\ux)+x_1\Delta_1(\ux)+x_2\Delta_2(\ux)+x_3\Delta_3(\ux))=0
\end{equation}
where the signed (and reduced) minors $\Delta_i(\u,u;\v,v)$ are bi-homogeneous polynomials of bi-degree $D:=(3d_1-2,3d_2-2)$.

Now, to characterize normal lines to $\Sc$ we will use the following property.  

\begin{lem} Let $H$ be a hyperplane in $\PP^3$ of equation $a_0x_0+a_1x_1+a_2x_2+a_3x_3=0$ and $L$ be a line in $\PP^3$, both not contained in the hyperplane at infinity $V(x_0) \subset \PP^3$. Then, $L$ is orthogonal to $H$, in the sense that their restrictions to the affine space $\PP^3 \setminus V(x_0)$ are orthogonal, if and only if the projective point $(0:a_1:a_2:a_3)$ belongs to $L$.
\end{lem}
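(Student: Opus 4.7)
The plan is to reduce the projective statement to an elementary affine computation by dehomogenizing with respect to $x_0$. Set the affine coordinates $y_i := x_i/x_0$ on the open set $\PP^3 \setminus V(x_0)$. Under this dehomogenization, the hyperplane $H$ restricts to the affine hyperplane with equation $a_1 y_1 + a_2 y_2 + a_3 y_3 + a_0 = 0$, whose normal direction is the vector $(a_1,a_2,a_3) \in \RR^3$; note that this normal direction is nonzero since otherwise $H$ would equal $V(x_0)$, which is excluded by hypothesis.

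Next, I would identify the direction vector of the affine line $L \cap (\PP^3 \setminus V(x_0))$ in terms of the point at infinity of $L$. Since $L$ is a projective line not contained in the hyperplane $V(x_0)$, the intersection $L \cap V(x_0)$ consists of exactly one projective point, necessarily of the form $(0:\ell_1:\ell_2:\ell_3)$ with $(\ell_1,\ell_2,\ell_3) \neq 0$. Parameterizing $L$ by a pencil through any of its affine points $(1:p_1:p_2:p_3)$ and this point at infinity shows that the affine restriction of $L$ has direction $(\ell_1,\ell_2,\ell_3)$.

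The affine line is then orthogonal to the affine hyperplane precisely when its direction is proportional to the normal to the hyperplane, that is, when there exists $\lambda \in \RR^*$ with $(\ell_1,\ell_2,\ell_3) = \lambda (a_1,a_2,a_3)$. In projective terms, this is the equality $(0:\ell_1:\ell_2:\ell_3) = (0:a_1:a_2:a_3)$, which is equivalent to the assertion that the point $(0:a_1:a_2:a_3)$ is the point at infinity of $L$, hence lies on $L$.

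There is no real obstacle here beyond making sure the setup is unambiguous: one must check that $(0:a_1:a_2:a_3)$ is a well-defined projective point (which follows from $H \neq V(x_0)$) and that $L \cap V(x_0)$ is a single point (which follows from $L \not\subset V(x_0)$). Once these two observations are in place, the equivalence is immediate from the standard identification between points at infinity of projective lines and direction vectors of their affine restrictions.
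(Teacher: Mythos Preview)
Your proof is correct. It differs from the paper's argument in how the direction of the affine restriction of $L$ is identified. You use the standard fact that the unique point $L\cap V(x_0)=(0:\ell_1:\ell_2:\ell_3)$ is precisely the point at infinity encoding the direction vector $(\ell_1,\ell_2,\ell_3)$ of the affine line, and then compare this directly with the normal $(a_1,a_2,a_3)$ of the affine hyperplane. The paper instead writes $L=H_1\cap H_2$ as the intersection of two hyperplanes $\sum \alpha_i x_i=0$ and $\sum \beta_i x_i=0$, obtains the affine direction as the cross product $(\alpha_1,\alpha_2,\alpha_3)\wedge(\beta_1,\beta_2,\beta_3)$, and concludes by noting that $(a_1,a_2,a_3)$ is parallel to this cross product iff it is orthogonal to both factors, i.e.\ iff $(0:a_1:a_2:a_3)\in H_1\cap H_2=L$. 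Your route is a bit more direct and avoids choosing auxiliary hyperplanes; the paper's route has the minor advantage of not requiring one to single out an affine point on $L$ for the parameterization. Either way the content is the same elementary observation.
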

\begin{proof} Let $H_1$, $H_2$ be two hyperplanes of equations $\sum_{i=0}^3 \alpha_i x_i=0$ and $\sum_{i=0}^3 \beta_ix_i=0$ respectively, and suppose that their intersection is exactly the line $L$. After restriction to the affine space $\PP^3 \setminus V(x_0)$, we have that the direction of $L$ is given by the cross product of the two vectors $(\alpha_1,\alpha_2,\alpha_3)$ and $(\beta_1,\beta_2,\beta_3)$. Therefore, we deduce that $L$ is orthogonal to $H$ if and only if the vector $(a_1,a_2,a_3)$ is orthogonal to both vectors   
$(\alpha_1,\alpha_2,\alpha_3)$ and $(\beta_1,\beta_2,\beta_3)$, which precisely means that the projective point $(0:a_1:a_2:a_3)$ belongs to the hyperplanes $H_1$ and $H_2$, hence to $L$.
\end{proof}

From the above property \blue{we deduce that there are two points that belong} to $\Sc$, namely the point $\Phi(\ux)=(F_0(\ux):F_1(\ux):F_2(\ux):F_3(\ux))$ and the point $(0:\Delta_1(\ux):\Delta_2(\ux):\Delta_3(\ux))$. Therefore, we can derive explicit rational parameterizations of the congruence of normal lines \eqref{eq:Psi} as follows.

If $X=\PP^2$ and $d\geq 2$, we get the following parameterization for the congruence of normal lines of a triangular rational surface
\begin{eqnarray}\label{eq:PsiHomP2}
\Psi : \PP^2 \times \PP^1 & \dto & \PP^3\\ \nonumber
(w:u:v) \times (\t:t) & \mapsto & (\Psi_0:\Psi_1:\Psi_2:\Psi_3)
\end{eqnarray} 
where
$$
\Psi_0 = \t w^{2d-3} F_0(w,u,v), \
\Psi_i = \t w^{2d-3} F_i(w,u,v) + t\Delta_i(w,u,v), \ i=1,2,3$$
are bi-homogeneous polynomials of bi-degree $(3d-3,1)$ over $\PP^2\times \PP^1$. 

If $X=\PP^1\times \PP^1$ and $d_1\geq 1$ and $d_2\geq 1$ we get the following parameterization for the congruence of normal lines of a tensor-product rational surface
\begin{eqnarray}\label{eq:PsiHomP1P1}
\Psi : \PP^1\times \PP^1 \times \PP^1 & \dto & \PP^3\\ \nonumber
(\u:u)\times (\v:v) \times (\t:t) & \mapsto & (\Psi_0:\Psi_1:\Psi_2:\Psi_3)
\end{eqnarray} 
where
\begin{align*}
 \Psi_0 &=\t \u^{2d_1-2}\v^{2d_2-2} F_0(\u,u;\v,v),\\
 \Psi_i&= \t  \u^{2d_1-2}\v^{2d_2-2} F_i(\u,u;\v,v) + t\Delta_i(\u,u;\v,v), \ i=1,2,3
\end{align*}
are tri-homogeneous polynomials of degree $(3d_1-2,3d_2-2,1)$ over $\PP^1\times \PP^1 \times \PP^1$. 

\medskip

We emphasize that the above parameterizations hold for general rational triangular and tensor-product surfaces, so that some simplifications may appear in some particular cases. For instance, such simplifications are obtained with non-rational triangular and tensor-product surfaces. Indeed, in those cases the polynomials $\Delta_1,\Delta_2$ and $\Delta_3$ have a common factor, namely $w^{d-1}$ if $X=\PP^2$, and $\u^{d_1-1}\v^{d_2-1}$ if $X=\PP^1\times \PP^1$. Therefore, this common factor propagates to the polynomials $\Psi_0,\ldots,\Psi_3$ and hence the corresponding  parameterization $\Psi$ of the congruence of normal lines is given by polynomials of bi-degree $(2d-2,1)$ if $X=\PP^2$ and of tri-degree $(2d_1-1,2d_2-1,1)$ if $X=\PP^1\times \PP^1$. These considerations are summarized in Table \ref{tab:degreePsi}.

\begin{table}[ht]
\centering	
\renewcommand{\arraystretch}{1.5}
\begin{tabular}{|c|c|c|}  
	\hline 
	$\deg(\Psi_i)$ &  Triangular surface &  Tensor-product surface \\ \hline
	Non-rational & $(2d-2,1)$ & $(2d_1-1,2d_2-1,1)$  \\ \hline 
	Rational & $(3d-3,1)$ & $(3d_1-2,3d_2-2,1)$ \\  \hline
\end{tabular}
\caption{Degree of the parameterizations of the congruence $\Psi$ of normal lines associated to  non-rational/rational triangular/tensor-product surfaces}
\label{tab:degreePsi}
\end{table}

\subsection{Base locus}\label{sec:baselocus} Consider the map $\Psi$ defined by \eqref{eq:Psi}. 
Its base locus $\Bc$ 
is the subscheme of $X\times \PP^1$ defined by the polynomials $\Psi_0,\Psi_1,\Psi_2,\Psi_3$. As we will see in the next section, this locus is of particular importance in our syzygy-based approach for studying the ``fibers'' of $\Psi$. 

Without loss of generality, $\Bc$ can be assumed to be of dimension at most one by simply removing the common factor of the polynomials $\Psi_0,\ldots,\Psi_3$, if any. It is clear from \eqref{eq:PsiHomP2} and \eqref{eq:PsiHomP1P1} that $\Bc$ is always one-dimensional. In the following lemma we describe the curve component of $\Bc$ when the corresponding surface parameterization $\Phi$ is sufficiently general. Below, inequalities between tuples of integers are understood component-wise.

\begin{lem}\label{lem:baselocus} For a general rational surface parameterization $\Phi$, the curve component of the base locus $\Bc$ of its corresponding congruence of normal lines $\Psi$ as defined in \eqref{eq:PsiHomP2} and \eqref{eq:PsiHomP1P1},  is given by 
	\begin{itemize}
		\item the ideal $(w^{2d-3},t)$ if $X=\PP^2$ and $d\geq 2$,
		\item the ideal $(\u^{2d_1-2}\v^{2d_2-2},t)$ if $X=\PP^1\times \PP^1\times \PP^1$ and $(d_1,d_2)\geq (1,1)$.
	\end{itemize}
Similarly, for a general non-rational surface parameterization $\Phi$, the base locus $\Bc$ of $\Psi$ is a one-dimensional subscheme of $X\times \PP^1$ whose curve component is defined by 
	\begin{itemize}
		\item the ideal $(w^{d-2},t)$ if $X=\PP^2$ and $d\geq 2$,
		\item the ideal $(\u^{d_1-1}\v^{d_2-1},t)$ if $X=\PP^1\times \PP^1\times \PP^1$ and $(d_1,d_2)\geq(1,1)$.
	\end{itemize}
\end{lem}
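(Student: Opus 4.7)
The plan is to pin down the curve component of $\Bc \subset X \times \PP^1$ in two stages: (i) identify the reduced support of the one-dimensional part of $V(J)$, where $J = (\Psi_0,\Psi_1,\Psi_2,\Psi_3)$, and (ii) compute the scheme structure by localizing at the generic point of each codimension-two component and extracting the corresponding primary component of $J$. I would give full details in the rational triangular case; the other three cases follow by the same template with minor adjustments.

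For the set-theoretic step (rational, $X = \PP^2$), I inspect $V(J)$ chart-by-chart in $\PP^1$. On $\t = 1, t = 0$ the equations reduce to $w^{2d-3} F_i = 0$, which forces $w = 0$ by genericity of the $F_i$'s and produces the curve $V(w,t)$. On $\t = 0, t = 1$ one gets $\Delta_1 = \Delta_2 = \Delta_3 = 0$, a zero-dimensional locus in $\PP^2$ for general $\Phi$. When both $\t, t$ are nonzero, $\Psi_0 = 0$ forces $w = 0$ or $F_0 = 0$, and both contribute only finitely many points (on $\{w = 0\}$ the system $\Delta_i(0,u,v) = 0$ for $i = 1,2,3$ is zero-dimensional, and $V(F_0,\Delta_1,\Delta_2,\Delta_3)$ is empty for generic $\Phi$). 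Thus $V(w,t)$ is the unique one-dimensional component of $V(J)$. To determine the scheme structure, I localize at $\pp = (w,t)$; at the generic point of $V(w,t)$ the elements $\t$, $F_i(0,u,v)$ and $\Delta_i(0,u,v)$ are all units. The elementary identity
$$F_2\Psi_1 - F_1\Psi_2 = t\,(F_2\Delta_1 - F_1\Delta_2),$$
combined with the generic non-vanishing of $F_2\Delta_1 - F_1\Delta_2$ along $V(w,t)$, yields $t \in J_\pp$. Then $\Psi_0 = \t w^{2d-3} F_0$ and the unit status of $\t F_0$ give $w^{2d-3} \in J_\pp$, while the reverse containment is immediate. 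Since $(w^{2d-3}, t)$ is $(w,t)$-primary, it is the sought $\pp$-primary component of $J$, which is exactly the curve component of $\Bc$.

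The remaining cases adapt this template. The non-rational triangular case is handled by applying the argument to the reduced parameterization $\tilde\Psi_i := \Psi_i / w^{d-1}$, yielding the primary component $(w^{d-2}, t)$; for $d = 2$ this collapses to the unit ideal, in agreement with the absence of a curve component. For $X = \PP^1 \times \PP^1$, two codimension-two primes $(\u,t)$ and $(\v,t)$ are potentially relevant, present respectively when $d_1 \geq 2$ and $d_2 \geq 2$. The same local analysis produces the primary components $(\u^{2d_1-2}, t)$ and $(\v^{2d_2-2}, t)$ in the rational case and $(\u^{d_1-1}, t)$, $(\v^{d_2-1}, t)$ in the non-rational case (after factoring the common divisor $\u^{d_1-1}\v^{d_2-1}$). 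The global ideal of the curve component is then obtained by intersecting these primary components via the identity $(\u^a, t) \cap (\v^b, t) = (\u^a\v^b, t)$, valid because $\u$ and $\v$ form a regular sequence in $R/(t)$. The main delicate point throughout is making the genericity hypotheses on $\Phi$ precise: non-vanishings of auxiliary expressions such as $F_2\Delta_1 - F_1\Delta_2$ along the curves, emptiness of intersections like $V(F_0,\Delta_1,\Delta_2,\Delta_3)$, and the absence of common roots of the $\Delta_i$'s restricted to the coordinate hyperplanes are all codimension-one conditions on the parameter space of surface parameterizations; verifying they hold simultaneously on a dense open set is the bulk of the careful bookkeeping.
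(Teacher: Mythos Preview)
Your argument is correct and in substance coincides with the paper's: both hinge on the factorization
\[
(\Psi_0\ \Psi_1\ \Psi_2\ \Psi_3)=(\t w^{2d-3}\ \ t)\cdot
\begin{pmatrix}F_0&F_1&F_2&F_3\\0&\Delta_1&\Delta_2&\Delta_3\end{pmatrix}
\]
and on the fact that the $2$-minors of the right-hand matrix vanish only in codimension~$\geq 2$ for general~$\Phi$. The paper states this in one stroke via the colon ideal $(\Psi_i):(\t w^{2d-3},t)$ and the $2$-minors; your identity $F_2\Psi_1-F_1\Psi_2=t(F_2\Delta_1-F_1\Delta_2)$ is exactly one such minor computation, and your localization at $(w,t)$ is the local translation of the same Fitting-ideal argument. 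The only difference is packaging: you separate the set-theoretic and scheme-theoretic steps and work one prime at a time, which makes the genericity hypotheses more visible, while the paper compresses everything into the matrix identity and a short dimension count on the minors. Neither approach gains or loses anything mathematically over the other.
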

\begin{proof} We only consider the case $X=\PP^2$ and $\Phi$ rational, the other cases are similar. By \eqref{eq:PsiHomP2}, we have the matrix equality
$$ \left(
\begin{array}{cccc}
	\Psi_0 & \Psi_1 & \Psi_2 & \Psi_3
\end{array}
\right)=
\left(
\begin{array}{cc}
	\t w^{2d-3} & t 
\end{array}
\right)\cdot 
\left(\begin{array}{cccc}
	F_0 & F_1 & F_2 & F_3 \\
	0 & \Delta_1 & \Delta_2 & \Delta_3
\end{array}
\right)
$$
so that the ideal $(\Psi_0,\ldots,\Psi_3):(\t w^{2d-3},t )$ is contained in the ideal generated by $\Psi_0,\ldots, \Psi_3$ and the 2-minors of the matrix 
$$\mathbb{F}:=\left(\begin{array}{cccc}
	F_0 & F_1 & F_2 & F_3 \\
	0 & \Delta_1 & \Delta_2 & \Delta_3
\end{array}
\right).$$
Then \blue{the first row of $\mathbb{F}$ vanishes at the base points} of the surface parameterization $\Phi$, which are assumed to be finitely many. The second row of $\mathbb{F}$ vanishes at the singular points of the image of $\Phi$ and at the points where the tangent plane is of equation $x_0=0$; if $\Phi$ is sufficiently general then these latter points are also finitely many. Finally, the two rows of $\mathbb{F}$ are proportional at finitely many points such that $F_0=0$, always assuming that $\Phi$ is sufficiently general. Therefore, we deduce that the curve component of the ideal defined by the $\Psi_i$'s is defined by the ideal $(w^{2d-3},t)$ in $\PP^2\times \PP^{1}$. 
\end{proof}

\begin{rem} 
If $X=\PP^1\times\PP^1$ and $(d_1,d_2)=(1,1)$ then there is no curve component in the base locus $\Bc$ for both non-rational and rational surface parameterizations. The same holds if $X=\PP^2$ and $d=2$ for non-rational surface parameterizations. 
\end{rem}

\section{Fibers and matrices of syzygies}\label{sec:mrepsurface}

In this section, we extend our framework and \blue{we assume that we are given a} homogeneous parameterization
\begin{eqnarray}\label{eq:defPsi}
	\Psi: X\times\PP^1 & \dto & \PP^3 \\ \nonumber
	       \ux \times(\t:t) & \mapsto &
		   \left(\Psi_0: \Psi_1: \Psi_2:  \Psi_3  \right),
\end{eqnarray}
where $X$ stands for the spaces $\PP^2$ or $\PP^1\times\PP^1$ over an algebraically closed field $k$, and the $\Psi_i$'s are homogeneous polynomials in the coordinate ring of $X\times \PP^1$ for all $i=0,1,2,3$. The coordinate ring $R_X$ of $X$ is equal to $k[w,u,v]$ or $k[\u,u;\v,v]$, respectively, depending on $X$. The coordinate ring of $\PP^1$ is denoted by $R_1=k[\t,t]$ and hence the coordinate ring of $X\times \PP^1$ is the polynomial ring 
$$R:=R_X\otimes_k R_1.$$
Thus, the polynomials $\Psi_0,\Psi_1,\Psi_2,\Psi_3$ are multi-homogeneous polynomials of degree $(\dg,e)$, where $\dg$ refers to the degree with respect to $X$, which can be either an integer if $X=\PP^2$, or either a couple of integers if $X=\PP^1\times\PP^1$. 

\medskip

In what follows, we assume that $\Phi$ is a dominant map. We denote by $I$ the ideal generated by $\Psi_0,\Psi_1,\Psi_2,\Psi_3$ in $R$. The base locus $\Bc$ of the map $\Psi$ is the subscheme of $X\times \PP^1$ defined by $I$. Without loss of generality, we assume that $\Bc$ is of dimension at most one. Our aim is to provide a matrix-based representation of the finite fibers of $\Psi$, by means of the syzygies of the ideal $I$. 

\subsection{Fiber of a point}
The map $\Psi$ being a rational map, its fibers are not well defined. To give a proper definition of the fiber of a point under $\Psi$, we need to consider the graph of $\Psi$ and its closure $\Gamma \subset X\times \PP^1\times \PP^3$. The defining equations of $\Gamma$ are the equations of the multi-graded Rees algebra of the ideal $I$ of $R$, denoted $\Rees_R(I)$, which is a domain. It has two canonical projections $\pi_1$ and $\pi_2$ onto $X\times \PP^1$ and $\PP^3$ respectively: 

\begin{equation}\label{eq:diagramGraph}
\xymatrix@1{\Gamma\ \ar[d]_{\pi_1}\ar[rrd]^{\pi_2}\ar@{^(->}[rr] & & X\times \PP^1 \times \PP^3 \\  X\times \PP^1 \ar@{-->}[rr]^{\Psi} &  & \PP^3}	
\end{equation}
Thus, the fiber of a point $p\in \PP^3$ is defined through the regular map $\pi_2$, i.e.~as $\pi_2^{-1}(p)$. More precisely, if $\kappa(p)$ denotes the residue field of $p$, \textit{the fiber of $p\in \PP^3$} is the subscheme 
\begin{equation}\label{eq:fiberRees}
	\Fk_p:=\Proj(\Rees_R(I)\otimes \kappa(p)) \subset X\times \PP^1.
\end{equation}
\blue{(We refer the reader to \cite[\S 6.5]{Eis95} for a standard reference on Rees algebras and to \cite[\S 3]{Cha06}, and the references therein, for more details on geometric considerations.)}

From a computational point of view, the equations of the Rees algebra $\Rees_R(I)$ are very difficult to get. Therefore, it is useful to approximate the Rees algebra $\Rees_R(I)$ by the corresponding symmetric algebra of the ideal $I$, that we denote by $\Sym_R(I)$. This approximation amounts to keep among defining equations of the Rees algebra only those that are \textit{linear} with respect to the third factor $\PP^3$. Thus, as a variation of the standard definition \eqref{eq:fiberRees} of the fiber of a point $p\in \PP^3$, we will consider the subscheme 
\begin{equation}\label{eq:linearfiber}
\Lk_p:=\Proj(\Sym_R(I)\otimes \kappa(p)) \subset X\times \PP^1
\end{equation}
that we call \textit{the linear fiber of $p$}. We emphasize that the fiber $\Fk_p$ is always contained in the linear fiber $\Lk_p$ of a point $p$, and that they coincide if the ideal $I$ is locally a complete intersection at $p$.

\subsection{Matrices built from syzygies}\label{subsec:matrixrep}

Given a point $p$ in $\PP^3$, the dimension and the degree of its linear fiber $\Lk_p$ can be read from its Hilbert polynomial. The evaluation of this polynomial, and more generally of its corresponding Hilbert function, can be done by computing the rank of a collection of matrices that we will introduce. They are built from the syzygies of the ideal $I$. An additional motivation to consider these matrices is that they also allow to compute effectively the points defined by the linear fiber $\Lk_p$, hence the pre-images of $p$ via $\Psi$, if $\Lk_p$ is finite. This property will be detailed in Section \ref{sec:Computation}.

\medskip

Let $k[x_0,x_1,x_2,x_3]$ be the coordinate ring of $\PP^3$. The symmetric algebra $\Sym_R(I)$ of the ideal $I=(\Psi_0,\ldots,\Psi_3)$ of $R$ is the quotient of the polynomial ring $R[x_0,x_1,x_2,x_3]$ by the ideal generated by the linear forms in the $x_i$'s whose coefficients are syzygies of the polynomials $\Psi_0,\ldots,\Psi_3$. More precisely,
consider the graded map 
\begin{eqnarray*}
	R(-\dg,-e)^4 & \rightarrow & R \\
	(g_0,g_1,g_2,g_3) & \mapsto & \sum_{i=0}^3 g_i\Psi_i 
\end{eqnarray*}
and denote its kernel by $Z_1$, which is nothing but the first module of syzygies of $I$. Setting $\Zc_1:=Z_1(\dg,e)\otimes R[x_0,\ldots,x_3]$ and $\Zc_0=R[x_0,\ldots,x_3]$, then the symmetric algebra $\Sym_R(I)$ admits the following multi-graded presentation
\begin{eqnarray}\label{eq:defvarphi}
	\Zc_1(-1) & \xrightarrow{\varphi} & \Zc_0 \rightarrow \Sym_R(I) \rightarrow 0 \\ \nonumber
	 	(g_0,g_1,g_2,g_3) & \mapsto & \sum_{i=0}^3 g_ix_i.
\end{eqnarray}
where the shift in the grading of $\Zc_1$ is with respect to the grading of $k[x_0,\ldots,x_3]$.

\begin{defn}\label{def:mrep} The graded component of the mapping $\varphi$ in any degree $(\mug,\nu)$ with respect to $R=R_X\otimes R_1$ gives a map of free $k[x_0,\ldots,x_3]$-modules. Its matrix, which depends on a choice of basis, is denoted by $\MM_{(\mug,\nu)}(\Psi)$, or simply by $\MM_{(\mug,\nu)}$. Its entries are linear forms in $k[x_0,\ldots,x_3]$.
\end{defn}

We notice that the computation of $\MM_{(\mug,\nu)}$ is rather simple from the parameterization $\Psi$. Indeed it amounts to compute a basis of the $k$-vector space of syzygies of degree $(\mug,\nu)$ of $I$, which can be done by solving a single linear system. As an illustration, here is the {\sc Macaulay2} \cite{M2} code for building such a matrix under the form $\MM_{(\mug,\nu)}=\sum_{i=0}^{3}x_i\MM_i$, the $\MM_i$'s being matrices with entries in $k$, 
 starting from a rational triangular surface parameterization $\Phi$. 
 
\begin{verbatim}
R=QQ[u,v,w]
d=2; f=random(R^{d},R^{0,0,0,0}) -- the surface parameterization
JM=jacobian f;
J=ideal(-determinant(JM_{1,2,3}), determinant(JM_{0,2,3}), 
  -determinant(JM_{0,1,3}),determinant(JM_{0,1,2}));
Sg=QQ[u,v,w,t,tt,Degrees=>{{1,0},{1,0},{1,0},{0,1},{0,1}}]
fg=sub(f,Sg);Jg=sub(J,Sg);
P0=tt*w^(2*d-3)*fg_(0,0)
P1=tt*w^(2*d-3)*fg_(0,1)+t*Jg_1
P2=tt*w^(2*d-3)*fg_(0,2)+t*Jg_2
P3=tt*w^(2*d-3)*fg_(0,3)+t*Jg_3 -- Pi's : congruence of normal lines 
D=3*d-3;E=1; -- degree of the Pi's
mu=4;nu=1; -- a particular matrix
Breg=basis({mu,nu},Sg); br=rank source Breg;
SM=Breg*P0|Breg*P1|Breg*P2|Breg*P3;
(sr,Sreg)=coefficients(SM,Variables=>{u_Sg,v_Sg,w_Sg,t_Sg,tt_Sg},
          Monomials=>basis({mu+D,nu+E},Sg));
MM=syz Sreg; 
M0=MM^{0..br-1};M1=MM^{br..2*br-1};
M2=MM^{2*br..3*br-1};M3=MM^{3*br..4*br-1};
\end{verbatim}	

\medskip

As a consequence of \eqref{eq:defvarphi}, for any point $p\in \PP^3$ the corank of the matrix $\MM_{(\mug,\nu)}$ evaluated at $p$, that we denote by $\MM_{(\mug,\nu)}(p)$, is equal to the Hilbert function of the linear fiber $\Lk_p$ in degree $(\mug,\nu)$. Because the Hilbert function is equal to its corresponding Hilbert polynomial for suitable degrees $(\mug,\nu)$, the corank of the matrix $\MM_{(\mug,\nu)}(p)$ is expected to stabilize for large values of $\mug$ and $\nu$ to a constant value if $\Lk_p$ is finite. In the next section we provide effective bounds for this stability property under suitable assumptions. 

{
\begin{exmp}\label{ex:segreMrep} 
	Consider the Segre embedding corresponding to bilinear polynomials, i.e.
	$$\begin{array}{rcl}
	\Phi : \PP^1\times \PP^1 & \red{\to} & \PP^3\\
	(\u:u)\times (\v:v) & \mapsto & (\u\v:u \v: \u v:uv).
	\end{array}$$
	Then, a parameterization $\Psi$ of the congruence of normal lines to \red{the Segre surface (image of $\Phi$)} is  \begin{eqnarray}\label{eq:paramCongruenceSegre}
	\Psi : \PP^1\times\PP^1\times\PP^1 & \rightarrow & \PP^3\nonumber \cr
	(\bar{u}:u)\times (\bar{v}:v)\times (\bar{t},t) & \mapsto & (\bar{t}\bar{u}\bar{v} : \bar{t}u\bar{v}+t\bar{u}v : \bar{t}v\bar{u} +t\bar{u}\bar{v} : \bar{t}uv -t\bar{u}\bar{v})
	\end{eqnarray} of degree $(1,1,1)$ on $\PP^1\times\PP^1\times\PP^1$. Choosing the integer $(\mug,\nu)=(2,2,0)$ (see   \textsection{\ref{subsec:MrepOfLinearFibers}} for an explanation of this particular choice), the matrix $\MM_{(\mug,\nu)}(\Psi)$ is easily computed with {\sc Macaulay2} \cite{M2}:   
	{\small 
		$$\MM_{(2,2,0)}:=\begin{pmatrix}
		0&{-{x}_{0}}&0&0&0\\
		0&0&{-{x}_{0}}&{x}_{1}&0\\
		{x}_{0}&{-{x}_{0}}&0&{-{x}_{3}}&0\\
		0&0&0&{-{x}_{2}}&{-{x}_{0}}\\
		0&{x}_{3}&0&0&0\\
		{-{x}_{1}}&{x}_{1}&{x}_{3}&{-{x}_{2}}&{-{x}_{0}}\\
		{-{x}_{0}}&0&0&{x}_{3}&0\\
		{x}_{2}&0&{-{x}_{0}}&{x}_{1}&{x}_{3}\\
		0&0&{x}_{2}&0&{x}_{1}\end{pmatrix}.$$
	}
	
	\noindent Its rank at a general point in $\PP^3$ is equal to 4, so that the dimension of its cokernel is equal to 5, which is nothing but the Euclidean distance degree of \red{the Segre surface in $\PP^3$}.	
\end{exmp}
}

\subsection{The main results}\label{subsec:mainres} We recall that $I$ is the ideal of $R=R_X\otimes R_1$ generated by the defining polynomials of the map $\Psi$, i.e. $I:=(\Psi_0,\Psi_1,\Psi_2,\Psi_3)$. The irrelevant ideal of $X\times\PP^1$ is denoted by $B$; it is equal to the product of ideals $(w,u,v)\cdot (\t,t)$ if $X=\PP^2$, or to the product $(\u,u)\cdot (\v,v)\cdot (\t,t)$  if $X=\PP^1\times \PP^1$. The notation $I^{\textrm{sat}}$ stands for the saturation of the ideal $I$ with respect to the ideal $B$, i.e.~$I^{\textrm{sat}}=(I:B^\infty)$. The homogeneous polynomials $\Psi_0,\Psi_1,\Psi_2,\Psi_3$ are of degree $(\dg,e)$, where $\dg$ denotes the degree with respect to $X$ and $e$ denotes the degree with respect to $\PP^1$. We recall that inequalities between tuples of integers are understood component-wise. 

\medskip

The base locus of $\Psi$ is the subscheme of $X\times \PP^1$ defined by the ideal $I$; it is denoted by $\mathcal{B}$. Without loss of generality, $\Bc$ is assumed to be of dimension at most one; if $\dim(\Bc)=1$ then we denote by $\Cc$ its top unmixed one-dimensional curve component. 

\begin{defn}\label{def:negsec} The curve $\Cc\subset X \times \PP^1$ has no section in degree $<(\ag,b)$ if for any $\bm{\alpha}<\ag$ and $\beta<b$, 
$H^0(\Cc,\mathcal{O}_\Cc(\bm{\alpha},\beta))=0$. 
\end{defn}

\blue{We are now ready to state our main results, but for the sake of simplicity we first need to introduce the following notation.}

\begin{nt}\label{not:E} Let $r$ be a positive integer. For any $\bm{\alpha}=(\alpha_1,\ldots,\alpha_r) \in (\ZZ \cup \{ -\infty\} )^r$ we set 
	$$\EE (\bm{\alpha }):=\{ \bm{\zeta} \in \ZZ^r \ \vert \ \zeta_i \geq \alpha_i \textrm{ for all } i=1,\ldots,r\}.$$ 
It follows that for any $\bm{\alpha}$ and $\bm{\beta}$ in $(\ZZ \cup \{ -\infty\} )^r$ we have that $\EE (\bm{\alpha})\cap \EE (\bm{\beta})=\EE (\bm{\gamma})$ where $\gamma_i =\max\{ \alpha_i ,\beta_i \}$ for all $i=1,\ldots,r$, i.e.~ $\bm{\gamma}$ is the maximum of $\bm{\alpha}$ and $\bm{\beta}$ component-wise.	
\end{nt}

\begin{thm}\label{thm:regstabilization} Assume that we are in one of the two following cases:
	\begin{itemize}
		\item[(a)] The base locus $\Bc$ is finite, possibly empty,
		\item[(b)] $\dim(\Bc)=1$, $\Cc$ has no section in degree $<(\bm{0},e)$ and $I^{\textrm{sat}}=I'^{\textrm{sat}}$ where $I'$ is an ideal generated by three general linear combinations of the polynomials $\Psi_0,\ldots,\Psi_3$.
  	\end{itemize}
	Let $p$ be a point in $\PP^3$ such that $\Lk_p$ is finite, then
	$$ \corank \, \MM_{(\mug,\nu)}(p) =\deg(\Lk_p)$$
	for any degree $(\mug,\nu)$ such that
	\begin{itemize}
		\item if $X=\PP^2$,
		\begin{equation}\label{eq:thmain1-P2}
			(\mu,\nu)\in \EE(3d-2,e-1) \cup \EE(2d-2,3e-1).
		\end{equation}
		\item if $X=\PP^1\times \PP^1$, 
		\begin{multline}\label{eq:thmain1-P1P1}
			(\mug,\nu) \in \EE(3d_1-1,2d_2-1,e-1) \cup \EE(2d_1-1,3d_2-1,e-1) \\
			\cup \EE(2d_1-1,2d_2-1,3e-1).
		\end{multline}
	\end{itemize}	
\end{thm}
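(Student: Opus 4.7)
The plan is to reduce the statement to a vanishing of local cohomology of the symmetric algebra. Since $\MM_{(\mug,\nu)}(p)$ is a presentation matrix in degree $(\mug,\nu)$ of $\Sym_R(I) \otimes_{k[x_0,\ldots,x_3]} \kappa(p)$, one has
\begin{equation*}
\corank \MM_{(\mug,\nu)}(p) = \dim_{\kappa(p)}\bigl( \Sym_R(I) \otimes \kappa(p) \bigr)_{(\mug,\nu)}.
\end{equation*}
Because $\Lk_p$ is finite, the right-hand side equals $\deg(\Lk_p) + \dim_{\kappa(p)} H^0_B\bigl(\Sym_R(I) \otimes \kappa(p)\bigr)_{(\mug,\nu)}$ as soon as $(\mug,\nu)$ lies past the multi-graded regularity of $\Lk_p$. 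The theorem therefore reduces to showing that, in the stated degree ranges, this $H^0_B$-module vanishes and the Hilbert function of $\Lk_p$ has already stabilized to its polynomial value $\deg(\Lk_p)$.

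The main technical tool I would use is the approximation complex $\Zc_\bullet$ attached to the sequence $\Psi_0,\ldots,\Psi_3$, a multi-graded complex of free $R[x_0,\ldots,x_3]$-modules whose zeroth homology is $\Sym_R(I)$ and whose terms $\Zc_i$ are built from the Koszul cycles $Z_i(\Psi;R)$ twisted by $i\cdot(\dg,e)$. In case (a), the finiteness of $\Bc$ forces acyclicity of $\Zc_\bullet$ through the standard depth criterion of Herzog--Simis--Vasconcelos, and the \v{C}ech--hypercohomology spectral sequence
\begin{equation*}
E_1^{p,q}=H^q_B(\Zc_p)\Rightarrow H^{p+q}_B(\Sym_R(I))
\end{equation*}
reduces the problem to explicit vanishings of $H^q_B(R)$, which are accessible by the K\"unneth formula on $X\times\PP^1$. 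Tracking the twists induced on each factor separately is what produces the asymmetric degree regions in \eqref{eq:thmain1-P2} and \eqref{eq:thmain1-P1P1}. Specialization to $\kappa(p)$ is then a base-change argument: in the stated range the higher $B$-local cohomology of $\Sym_R(I)$ already vanishes, so tensoring with $\kappa(p)$ over $k[x_0,\ldots,x_3]$ preserves the vanishing of $H^0_B$, and the finiteness of $\Lk_p$ places $(\mug,\nu)$ past its regularity threshold.

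Case (b) is the main obstacle, since $\Zc_\bullet$ is no longer acyclic in the presence of the one-dimensional component $\Cc$. I would follow a residual strategy: because $I^{\textrm{sat}}=I'^{\textrm{sat}}$, the symmetric algebras $\Sym_R(I)$ and $\Sym_R(I')$ coincide after $B$-saturation, so their higher $B$-local cohomology modules agree and their $H^0_B$-modules differ only by a controllable $B$-torsion piece. One then runs the spectral-sequence argument on the shorter approximation complex of $I'$. The curve $\Cc$ now contributes to $H^1_B$ through terms of the form $H^0(\Cc,\Oc_\Cc(\bm{\alpha},\beta))$, and the hypothesis that $\Cc$ has no section in degree $<(\bm{0},e)$ is exactly what kills these contributions in the target degree range, yielding once again the regions \eqref{eq:thmain1-P2}--\eqref{eq:thmain1-P1P1}. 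The delicate points will be to make the comparison $I\leftrightarrow I'$ sufficiently tight that the vanishings transfer back to $\Sym_R(I)$ without loss, and to match the numerical effect of the ``no sections'' assumption with the Koszul twists in order to obtain sharp thresholds.
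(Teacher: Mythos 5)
Your high-level plan is the same one the paper uses: reduce the statement to vanishing of $B$-local cohomology of $\Sym_R(I)\otimes\kappa(p)$ via Grothendieck--Serre, then control that local cohomology with the approximation complex $\Zc_\bullet$ and the two spectral sequences of the \v{C}ech double complex, with the ``no sections in low degree'' hypothesis on $\Cc$ killing the curve contributions. Two remarks on details, the second of which is a genuine gap.

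First, in case (a) you invoke acyclicity of $\Zc_\bullet$ ``through the standard depth criterion of Herzog--Simis--Vasconcelos.'' The paper never claims acyclicity and does not need it: $\Zc_\bullet$ can have nonzero higher homology supported where $I$ fails to be a local almost complete intersection. What it uses is the weaker fact that, because $\Bc$ is at most a curve that is a.c.i.\ at its generic points, $H^i_B(H_j(\Zc_\bullet))=0$ for $i>1$, $j>0$; from this alone the row-filtered spectral sequence degenerates enough to run the argument. Also, your reduction only isolates $H^0_B$; the Grothendieck--Serre formula involves all $H^i_B$, and for a zero-dimensional $\Lk_p$ you must in addition prove $H^1_B(\Sym_R(I)\otimes\kappa(p))_{(\mug,\nu)}=0$ in the stated range (it is not enough to say this happens ``past the regularity,'' since the regularity threshold is precisely what is being computed).

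Second, and more seriously, in case (b) your strategy of replacing $\Sym_R(I)$ by $\Sym_R(I')$ (``shorter approximation complex of $I'$'') on the grounds that $I^{\textrm{sat}}=I'^{\textrm{sat}}$ forces the symmetric algebras to ``coincide after $B$-saturation'' is not justified and I doubt it is true. The symmetric algebra depends on the module $I$ with its given presentation, not only on the ideal up to saturation: even though $I'\subseteq I$ induces a map $\Sym_R(I')\to\Sym_R(I)$ and the two sheaves $\widetilde{I}$ and $\widetilde{I'}$ agree, the graded algebras (and hence the linear fibers $\Lk_p$ and their degrees, and the matrices $\MM_{(\mug,\nu)}$ whose corank you are computing) need not match after applying $H^0_B$. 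The matrices in the theorem are built from syzygies of the four $\Psi_i$'s, so the object under study really is $\Sym_R(I)$ for the four-generator ideal. The paper keeps the four-generator approximation complex throughout and only brings in $I'$ at the level of Koszul homology: there is a four-term exact sequence $0\to M\to H_1'\to H_1\to H_0'(-\bm{\delta})\to N\to 0$ with $M,N$ supported on $V(B)$, and this (Lemma~\ref{lem:H2BH1}) is what lets one bound $H^2_B(H_1)$ in terms of $H^0(\Cc,\Oc_\Cc(\cdot))^\vee$ and $H^2_B(S/I)$. That comparison happens on the Koszul side and yields precise graded vanishing; a coarse ``symmetric algebras agree up to $B$-torsion'' argument does not recover the sharp degree thresholds in \eqref{eq:thmain1-P2} and \eqref{eq:thmain1-P1P1}, nor does it explain why the corank of the matrix built from the four $\Psi_i$'s equals $\deg(\Lk_p)$ rather than the degree of some fiber of $\Sym_R(I')$.

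So: right skeleton, but in case (b) you need the Koszul-homology comparison between $I$ and $I'$, not a saturation comparison between their symmetric algebras.
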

\begin{proof} By \eqref{eq:defvarphi}, the corank of the matrix $\MM_{(\mug,\nu)}(p)$ is equal to the Hilbert function of $\Sym_R(I)\otimes\kappa(p)$ at $(\mug,\nu)$, that we denote by $HF_{\Sym_R(I)\otimes\kappa(p)}(\mug,\nu)$. Moreover, since $\Lk_p$ is assumed to be finite, the Hilbert polynomial of $\Sym_R(I)\otimes\kappa(p)$, denoted $HP_{\Sym_R(I)\otimes\kappa(p)}(\mug,\nu)$, is a constant polynomial which is equal to the degree of $\Lk_p$. 
 Now, the Grothendieck-Serre formula shows that for any degree $(\mug,\nu)$ we have the equality (see for instance \cite[Proposition 4.26]{BotCh})
$$ HP_{\Sym_R(I)\otimes\kappa(p)}(\mug,\nu)=HF_{\Sym_R(I)\otimes\kappa(p)}(\mug,\nu) - \sum_{i\geq 0} (-1)^iHF_{H^i_B(\Sym_R(I)\otimes\kappa(p))}(\mug,\nu)$$
\blue{where $H^i_B(-)$ stands for local cohomology modules (we refer the reader to \cite[\S 4]{Cha06} for a brief introduction to these modules in a similar setting).}
Therefore, the theorem will be proved if we show that the Hilbert functions of the local cohomology modules $H^i_B(\Sym_R(I)\otimes\kappa(p))$ vanish for all integers $i$ and all degrees $(\mug,\nu)$ satisfying to the conditions stated in the theorem. This property is the content of Theorem \ref{thm:main} whose proof is postponed to Section \ref{sec:proofmainthm}.
\end{proof}

In the case where the base locus $\Bc$ has dimension one, the assumption $I^{\textrm{sat}}=I'^{\textrm{sat}}$ in Theorem \ref{thm:regstabilization}, item (b), can be a restrictive requirement, in particular in our targeted application for computing orthogonal projections onto rational surfaces. The next result allows us to remove this assumption in the case the curve component $\Cc$ of the base locus is a complete intersection.

\begin{thm}\label{thm:mainresidual} Assume that $\dim(\Bc)=1$ and that $\Cc$ has no section in degree $<(\bm{0},e)$. Moreover, assume that there exists an homogeneous ideal $J\subset R$ generated by a regular sequence $(g_1,g_2)$ such that $I\subset J$ and $(I:J)$ defines a finite subscheme in $X\times \PP^1$. Denote by $(\mg_1,n_1)$, resp.~$(\mg_2,n_2)$, the degree of $g_1$, resp.~$g_2$, set $\eta:=\max(e-n_1-n_2,0)$ and let $p$ be a point in $\PP^3$ such that $\Lk_p$ is finite. Then, 
	$$ \corank \, \MM_{(\mug,\nu)}(p) =\deg(\Lk_p)$$
	for any degree $(\mug,\nu)$ such that
	\begin{itemize}
		\item if $X=\PP^2$
		\begin{equation}\label{eq:thmain2-P2}
			(\mu,\nu)\in \EE (3d-2,e-1+\eta) \cup \EE (2d-2+d-\min\{m_1,m_2\},3e-1).
		\end{equation}
		\item if $X=\PP^1\times \PP^1$
		\begin{multline}\label{eq:thmain2-P1P1}
			(\mug,\nu)\in \EE(3d_1-1,2d_2-1+\tau_2,e-1+\eta) \cup \\
			\EE(2d_1-1+\tau_1,3d_2-1,e-1+\eta)\cup \\ \EE(2d_1-1+\tau_1,2d_2-1+\tau_2,3e-1),
		\end{multline}
		where $\tau_i := d_i-\min\{2m_{1,i}+m_{2,i},m_{i,1}+2m_{2,i},d_i\}\geq 0, \  i=1,2.$
	\end{itemize} 	
\end{thm}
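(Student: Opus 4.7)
The plan is to follow the same template as the proof of Theorem \ref{thm:regstabilization}: apply the Grothendieck-Serre formula to the specialized symmetric algebra $\Sym_R(I)\otimes\kappa(p)$ to reduce the assertion $\corank \MM_{(\mug,\nu)}(p)=\deg(\Lk_p)$ to the vanishing of the Hilbert functions of the local cohomology modules $H^i_B(\Sym_R(I)\otimes\kappa(p))$ in the prescribed degrees. All that changes is the tool used to produce those vanishings: the structural hypothesis $I^{\text{sat}}=I'^{\text{sat}}$ of Theorem \ref{thm:regstabilization}(b) is replaced by the existence of the complete intersection $J=(g_1,g_2)\supset I$ with residual $(I:J)$ finite.

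The inclusion $I\subset J$ lets us write $(\Psi_0,\Psi_1,\Psi_2,\Psi_3)=(g_1,g_2)\cdot N$ for some $2\times 4$ matrix $N$ of bi-homogeneous forms in $R$. From this factorization I would build an approximation complex of $\Sym_R(I)$ which maps to the Koszul complex of the regular sequence $(g_1,g_2)$; the mapping cone then fits $\Sym_R(I)$ into a two-sided comparison with $\Sym_R(J)$ (which is a polynomial ring over $R/J$ by regularity of $(g_1,g_2)$) and with the symmetric algebra, or rather the Koszul-type data, attached to the residual $(I:J)$. The standard spectral sequence associated to such a mapping cone expresses $H^\bullet_B(\Sym_R(I)\otimes\kappa(p))$ in terms of local cohomology of $R/J$, of the residual quotient $R/(I:J)$, and of the curve $\Cc$ embedded in $V(J)$.

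The three pieces are controlled separately. The local cohomology of $R/J$ is given by the Koszul complex of $(g_1,g_2)$: its $B$-local cohomology lives in degrees bounded by $(\mg_1+\mg_2,n_1+n_2)$ shifted by the Serre duality twist on $X\times\PP^1$, and this is precisely the source of the shifts $\tau_i$ (arising from the $2\times 2$ determinantal twists $2m_{1,i}+m_{2,i}$ and $m_{1,i}+2m_{2,i}$ that appear when one applies Cramer's rule inside the Koszul tail) and of $\eta=\max(e-n_1-n_2,0)$ (the corresponding $\PP^1$-slack). The residual quotient $R/(I:J)$ defines a finite subscheme so its local cohomology vanishes in any sufficiently large bidegree, and the assumption that $\Cc$ has no section in degrees $<(\bm{0},e)$ kills the contribution of $H^0(\Cc,\Oc_\Cc(\bm{\alpha},\beta))$ in those same degrees. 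Combining the three vanishing regions, taking the componentwise maximum as encoded by $\EE(\cdot)$, produces exactly the unions \eqref{eq:thmain2-P2} and \eqref{eq:thmain2-P1P1}.

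The hardest step will be the mapping cone / spectral sequence bookkeeping in the multigraded setting: one has to track two (for $X=\PP^2$) or three (for $X=\PP^1\times\PP^1$) independent graded shifts through the Koszul complex of $(g_1,g_2)$, through the approximation complex of $I$, and through the residual exact sequence relating $I$, $J$ and $(I:J)$, while simultaneously specializing at $p$. The fact that the three regions in \eqref{eq:thmain2-P1P1} are asymmetric with different threshold corrections $\tau_1,\tau_2,\eta$ in each coordinate strongly suggests that the argument splits along the three coordinate axes of $\PP^1\times\PP^1\times\PP^1$, each time privileging one direction and letting the cohomology of $\Cc$ absorb the vanishing in the other two. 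This bookkeeping is essentially the content of Theorem \ref{thm:main} in Section \ref{sec:proofmainthm}, to which the final step of the proof appeals.
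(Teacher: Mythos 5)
Your structural intuition is on the right track: the proof does follow the template of Theorem~\ref{thm:regstabilization}, the new ingredient is the complete intersection $J=(g_1,g_2)\supset I$, and the short exact sequence $0\to J/I\to R/I\to R/J\to 0$ is indeed the device that lets you control the three pieces separately. However, there are two places where your plan diverges from what actually works.

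First, you propose a wholesale reconstruction — a mapping cone between an approximation complex of $\Sym_R(I)$ and the Koszul complex of $(g_1,g_2)$, expressing all of $H^\bullet_B(\Sym_R(I)\otimes\kappa(p))$ in terms of $R/J$, $R/(I:J)$ and $\Cc$. The actual modification is much more surgical. In the proof of Theorem~\ref{thm:main}, everything reduces to the two-term piece~\eqref{eq:HOBHOexactseq}, the map $\phi_{(\mug,\nu)}\colon H^0_B(H_1)\to H^0_B(H_0)$, and vanishing of $H^1_B(\Sym_R(I)\otimes\kappa(p))$ is equivalent to surjectivity of $\phi_{(\mug,\nu)}$. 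The source is controlled by Lemma~\ref{lem:H2BH2} exactly as before (this is where the hypothesis that $\Cc$ has no section in degrees $<(\bm{0},e)$ enters; it is unchanged). The only change is how one kills the target $H^0_B(H_0)_{(\mug,\nu)}=H^0_B(R/I)_{(\mug,\nu)}$: instead of Lemma~\ref{lem:H2BH1} (which needs $I^{\text{sat}}=I'^{\text{sat}}$), one proves directly that $H^0_B(R/I)$ vanishes in the right degrees. So the approximation complex, the Cech double complexes, and their spectral sequences all carry over verbatim; the new content is confined to Proposition~\ref{prop:H0BH0}.

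Second, your attribution of the degree shifts is not accurate. The Koszul complex $K^J_\bullet$ of the regular sequence $(g_1,g_2)$ handles only $H^0_B(R/J)$, giving a vanishing region in terms of $H^2_B(R)_{(\mug,\nu)-\bm{\sigma}}$ with $\bm{\sigma}=(\mg_1+\mg_2,n_1+n_2)$ — this is the source of $\eta$. But the quantities $\tau_i$ (and the determinantal-looking twists $2m_{1,i}+m_{2,i}$, $m_{1,i}+2m_{2,i}$) do not come from Serre duality on $R/J$ or from ``Cramer's rule inside the Koszul tail'' of $(g_1,g_2)$. They come from the graded free modules in the \emph{Buchsbaum--Rim complex} of the $2\times5$ presentation matrix $\varphi\colon F_1'=F_2\oplus K_1\to F_1$ of $J/I$ that you obtain from the factorization $(\Psi_0\ \cdots\ \Psi_3)=(g_1\ g_2)\,H$. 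Since its homology is supported on $(I:J)$, which defines a finite subscheme, the row-filtered spectral sequence for $\CcB^\bullet(C_\bullet)$ degenerates at page two, and the vanishing of $H^0_B(J/I)_{(\mug,\nu)}$ is read off from the vanishing of $H^2_B(C_2)$, $H^3_B(C_3)$, $H^4_B(C_4)$ in degree $(\mug,\nu)$. The explicit graded decompositions of $C_2,C_3,C_4$ are what produce the twists $-4(\dg,e)+2(\mg_i,n_i)$, $-4(\dg,e)+\bm{\sigma}$, etc., and hence the numbers $\tau_i$. Without identifying the Buchsbaum--Rim complex and computing its terms with their multigraded shifts, the bookkeeping you describe in your last paragraph has nothing to work with, so the thresholds in~\eqref{eq:thmain2-P1P1} cannot actually be produced.

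So: keep the short exact sequence and the separate analysis of $R/J$ and $J/I$, but recognize that the argument only needs to replace the vanishing of $H^0_B(H_0)$, and that the $J/I$ piece requires the Buchsbaum--Rim complex of $\varphi$, not a Koszul complex of $(g_1,g_2)$ alone.
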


\begin{proof} The proof of Theorem \ref{thm:regstabilization} applies almost verbatim; the specific properties of this theorem are given in Proposition \ref{prop:H0BH0}, as a refinement of Theorem \ref{thm:main}. More details are given in \S \ref{subsec:residual}.
\end{proof}

Observe that the lower bounds on the degree $(\mug,\nu)$ given in Theorem \ref{thm:mainresidual} are similar to those given in Theorem \ref{thm:main} up to shifts in some partial degrees that depend on the defining degrees of the curve $\Cc$. In addition, we mention that in the case where the base locus $\Bc$ is finite (including empty), which is the case of a general map $\Psi$ of the form \eqref{eq:defPsi}, Theorem \ref{thm:regstabilization} gives a natural extension of results obtained in \cite{BBC13}. We also emphasize that our motivation to consider maps with one-dimensional base locus comes from congruences of normal lines to rational surfaces that have been introduced in Section \ref{sec:surface}.

\section{Vanishing of some local cohomology modules}\label{sec:proofmainthm}

The goal of this section is to provide results on the vanishing of particular graded components of some local cohomology modules. \blue{Indeed, as explained in Section \ref{subsec:mainres}, these vanishing results are necessary to complete the proofs of Theorem \ref{thm:regstabilization} and Theorem \ref{thm:mainresidual}}. We first recall and set some notation.  Let $k$ be a field and consider the parameterization
\begin{eqnarray}\label{eq:Psisecproof}
	\Psi: X\times\PP^1 & \dto & \PP^3 \\ \nonumber
	       \ux \times(\t:t) & \mapsto &
		   \left(\Psi_0: \Psi_1: \Psi_2:  \Psi_3  \right)(\xi;\t,t)
\end{eqnarray} 
The variety $X$ stands for $\PP^2$ or $\PP^1\times\PP^1$, so that $n=3$ or $n=4$. \blue{As in Section \ref{subsec:mainres}}, we denote by $R_X$ its coordinate ring, which is a standard graded polynomial ring, \blue{and by $B:=(\xi).(\t,t)$ its irrelevant ideal}. Thus, the polynomials $\Psi_0,\Psi_1,\Psi_2,\Psi_3$ are multi-homogeneous of multi-degree $(\dg,e)$ in the polynomial ring $R:=R_X\otimes_k R_1$ where $R_1=k[\t,t]$ is the coordinate ring of $\PP^1$. We assume that $(\dg,e)\geq (\bm{1},1)$ (otherwise the map is not dominant), where $\bm{1}:=(1,1)$ in the case $X=\PP^1\times \PP^1$. Let $I$ be the ideal generated by the coordinates of the map $\Psi$, i.e. $I:=(\Psi_0,\Psi_1,\Psi_2,\Psi_3)\subset R$. The base locus $\Bc$ of $\Psi$ is the subscheme of $X\times \PP^1$ defined by $I$. Without loss of generality, we assume that $\Bc$ is of dimension at most one.

\begin{thm}\label{thm:main} Take again the notation of \S \ref{subsec:matrixrep} and assume that one of the two following properties holds:
	\begin{itemize}
		\item[(a)] The base locus $\Bc$ is finite, possibly empty,
		\item[(b)] $\dim(\Bc)=1$, $\Cc$ has no section in degree $<(\bm{0},e)$ and $I^{\textrm{sat}}=I'^{\textrm{sat}}$ where $I'$ is an ideal generated by three general linear combinations of the polynomials $\Psi_0,\ldots,\Psi_3$.
	\end{itemize}
	Then, for any point $p$ in $Spec(R)$ such that $\Lk_p$ is finite, possibly empty, we have that 
$$H^i_B(\Sym_R(I)\otimes_R\kappa(p))_{(\mug,\nu)}=0$$	
for all integers $i$ and all degree $(\mug,\nu)$ satisfying to \eqref{eq:thmain1-P2} or \eqref{eq:thmain1-P1P1}.	
\end{thm}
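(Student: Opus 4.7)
The strategy is to exploit the approximation complex of cycles $\mathcal{Z}_\bullet$ associated to the generators $\Psi_0,\ldots,\Psi_3$ of $I$. Recall that $\mathcal{Z}_\bullet$ is a complex of $R[x_0,\ldots,x_3]$-modules whose $j$-th term is built from the module $Z_j$ of $j$-cycles of the Koszul complex $K_\bullet(\Psi_0,\ldots,\Psi_3;R)$ (with appropriate multi-degree shifts), and whose $0$-th homology is $\Sym_R(I)$. By forming the double complex obtained from $\mathcal{Z}_\bullet$ and the \v{C}ech complex $\check{C}^\bullet_B$, and passing to the specialization at $p\in\PP^3$ (tensoring over $k[x_0,\ldots,x_3]$ with $\kappa(p)$), one obtains two spectral sequences converging to the same hyper-local-cohomology: the first has $E_1$-page of the form $H^p_B(Z_q)\otimes\kappa(p)$ up to Tor corrections from the non-flatness of $\kappa(p)$ over $k[x_0,\ldots,x_3]$, while the second abuts along its main diagonal to $H^i_B(\Sym_R(I)\otimes\kappa(p))$.

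Thus the desired vanishing of $H^i_B(\Sym_R(I)\otimes\kappa(p))_{(\mug,\nu)}$ in the prescribed multi-degrees reduces to proving the vanishing of $H^p_B(Z_q)_{(\mug,\nu)}$ in suitably shifted multi-degrees, for every $q\geq 0$ with $Z_q\neq 0$. To handle $H^p_B(Z_q)$, I would chase the short exact sequences $0\to Z_j\to K_j\to B_{j-1}\to 0$ and $0\to B_{j-1}\to Z_{j-1}\to H_{j-1}(K_\bullet)\to 0$ extracted from the Koszul complex. The local cohomology of the free $R$-modules $K_j=\bigwedge^{j}R(-\dg,-e)^4$ is computable via a K\"unneth-type decomposition on $X\times\PP^1$, yielding explicit vanishing ranges in terms of $(\dg,e)$ and the dimensions of the two factors. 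The remaining ingredient is $H^p_B(H_j(K_\bullet))$, whose vanishing is controlled by the base locus.

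In case $(a)$ the base locus $\Bc$ is finite, so every Koszul homology $H_j(K_\bullet)$ with $j\geq 1$ is supported on $\Bc$, hence has zero-dimensional support, making $H^p_B(H_j(K_\bullet))=0$ for $p\geq 1$ and leaving only the $H^0_B$-terms which are bounded in degree. In case $(b)$, we replace $I$ by the ideal $I'$ generated by three general linear combinations of the $\Psi_i$'s; the assumption $I^{\text{sat}}=I'^{\text{sat}}$ ensures that the relevant $H^p_B$-computations are unchanged, so we may work with the shorter Koszul complex $K_\bullet(I';R)$ on three elements whose residual locus is (generically) a complete intersection containing the curve $\Cc$. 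The hypothesis that $\Cc$ has no section in degree $<(\bm{0},e)$ is then used exactly to kill $H^0_B(R/I')_{(\mug,\nu)}$ (equivalently, $H^0(\Cc,\mathcal{O}_\Cc(\mug,\nu))$ with $\nu<e$) in the relevant multi-degrees, preventing the curve component from contributing to the obstruction.

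The main obstacle will be case $(b)$: one must precisely track the multi-grading through the spectral sequences and the Koszul exact sequences while carefully incorporating the curve $\Cc$. In particular, the asymmetric unions $\EE(\cdots)\cup\EE(\cdots)$ in \eqref{eq:thmain1-P2} and the three-term unions in \eqref{eq:thmain1-P1P1} reflect that each $\EE$-region enforces vanishing by pushing one specific partial degree sufficiently large: the differing bounds across regions track each factor of $X\times\PP^1$ separately, and the $3e-1$ bound on the $\PP^1$-degree comes from the top Koszul homology sitting at homological degree $3$ along that direction. The interplay between these three vanishing regimes, combined with the section-vanishing hypothesis on $\Cc$, will require a delicate case-by-case degree chase, which I expect to be the technical heart of the argument.
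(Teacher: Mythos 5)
Your general framework is the same as the paper's: build the approximation complex $\mathcal{Z}_\bullet$ on $\Psi_0,\ldots,\Psi_3$, take the \v{C}ech--Koszul double complex, compare the two spectral sequences, and reduce the vanishing of $H^i_B(\Sym_R(I)\otimes\kappa(p))$ to vanishing of $H^p_B(Z_q)$ and $H^p_B(H_j)$ in suitable multi-degrees, the latter controlled by the explicit local cohomology of $R$ (via the $\Rc_i$ regions) and by the base locus. That much matches.

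The gap is in your treatment of case (b). You propose to ``replace $I$ by the ideal $I'$'' and work with the Koszul complex of the three general combinations, using $I^{\mathrm{sat}}=I'^{\mathrm{sat}}$ to argue the computations are unchanged. This cannot be done directly: the object you must control is $\Sym_R(I)\otimes\kappa(p)$, and $\Sym_R(I)\neq\Sym_R(I')$; the approximation complex whose $H_0$ is $\Sym_R(I)$ is built on all four $\Psi_i$'s, not on three. What the paper actually does is keep the four-term Koszul complex of $I$ throughout, show that after the spectral-sequence collapse everything reduces to a single map $\phi_{(\mu,\nu)}\colon H^0_B(H_1)\to H^0_B(H_0)$ which (for $(\mu,\nu)\in\Rc_{-1}$) identifies with a map $H^2_B(H_2)\to H^2_B(H_1)$, and then control these two modules by two separate lemmas: Lemma~\ref{lem:H2BH2} identifies $H^2_B(H_2)_\mu$ with $H^0(\Cc,\Oc_\Cc(-\mu+\sigma))^\vee$ by Serre duality, while Lemma~\ref{lem:H2BH1} produces, from the comparison exact sequence $0\to M\to H_1'\to H_1\to H_0'(-\delta)\to N\to 0$ of Bruns--Herzog between the Koszul homologies of $I$ and $I'$, a surjection onto $H^2_B(H_1)$ from $H^0(\Cc,\Oc_\Cc(-\mu-\delta+\sigma))^\vee$ plus a term $H^2_B(S/I)(-\delta)$. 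Both lemmas use the hypothesis $I^{\mathrm{sat}}=I'^{\mathrm{sat}}$ and the ``no sections in degree $<(\bm 0,e)$'' hypothesis, but the latter enters through Serre duality to kill dual section spaces at large $\mu$, not to kill $H^0_B(R/I')$ directly as you suggest.

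Two smaller slips: in case (a) you claim $H^p_B(H_j)=0$ for all $p\geq 1$ when $\Bc$ is finite, but zero-dimensional support only forces $H^p_B=0$ for $p\geq 2$; the treatment of $H^1_B$ and $H^0_B$ still requires the $\Rc_i$-region bookkeeping (see Remark~\ref{rem:dim0Lemmavanishing}). And the specialization at $p$ does not actually introduce Tor corrections in the way you flag: the terms $\mathcal{Z}_j$ are free over $k[x_0,\ldots,x_3]$, so the paper simply runs the same spectral-sequence argument on $\Zc^{\mathfrak p}_\bullet$, using that it is exact off $Y\cup\pi_2^{-1}(p)$; the delicate step is the acyclicity of the specialized complex away from a finite set, not flatness.
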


In order to prove this theorem we will use several notions and results from commutative algebra. We refer the reader to the books \cite{Eis95} and \cite{bruns1998cohen} for the Koszul and Cech complexes, local cohomology modules and their properties, as well as spectral sequences. We also refer the readers to the papers \cite{HSV82,HSV83} regarding the approximation complexes that we will use in this proof to analyze the symmetric algebra of $I$. 

We begin with some preliminary results on the control of the vanishing of the local cohomology of the cycles and homology of the Koszul complex associated to the sequence of homogeneous polynomials $\Psi_0,\ldots,\Psi_3$. 

\subsection{Some preliminaries on Koszul homology} The properties we prove below hold in a more general setting than the one of Theorem \ref{thm:mainresidual}. In order to state them in this generality we introduce the following notation. 

\medskip

Let $S$ be a standard $\ZZ^r$-graded polynomial ring; it is the Cox ring of a product of projective spaces $\PP :=\PP^{n_1}\times \cdots \times \PP^{n_r}$. We suppose given a sequence $\Psi_0,\ldots,\Psi_s$ of homogeneous polynomials in $S$ and we consider their associated Koszul complex $K_\bullet :=K_\bullet (\Psi_0,\ldots,\Psi_s ;S)$. We denote by $Z_i$ and $H_i$, respectively, the cycles and homology modules of $K_\bullet$. 

The irrelevant ideal $B$ of the Cox ring $S$ is the product of the $r$ ideals defined by the $r$ sets of variables. We set $I:=(\Psi_0,\ldots,\Psi_s )\subseteq S$ and  $\Bc :=\Proj (S/I)\subseteq \PP$. Recall that, for $i\geq 2$, and any $S/I$-module $M$ with associated sheaf ${\mathcal F}$, 
$$
H^i_B (M)_\mug \simeq H^{i-1}(\Bc ,{\mathcal F}(\mug ))
$$
and in particular $H^i_B (M)=0$ for $i>\dim (\Bc )+1$. We notice that in the setting of \eqref{eq:Psisecproof} we have $\dim \Bc \leq1$, $s=3$ and either $r=2$ and $(n_1,n_2)=(2,1)$ or $r=3$ and  $(n_1,n_2,n_3)=(1,1,1)$.

\medskip

As in the theory of multi-graded regularity, it is important to  provide regions in $\ZZ^r$ where some local cohomology modules of the ring $S$, or a direct sum of copies of $S$ like $K_i$, vanish. We first give a concrete application of this idea and then we will define regions in the specific case we will be working with. We recall that the support of a graded $S$-module $M$ is defined as 
$$ \Supp (M):=\{ \mug \in \ZZ^r \ \vert\ M_\mug \not= 0\} \subset \ZZ^r.$$

\begin{prop}\label{prop:vanishHiBHj} For any integer $i$, let $\Rc_i \subseteq \ZZ^r$ be a subset satisfying  
	\begin{equation*}
\forall j\in \ZZ : \Rc_i \cap \Supp (H^{j}_B(K_{i+j}))=\emptyset.		
	\end{equation*}
Then, if $\dim \Bc \leq 1$ the following properties hold for any integer $i$:
\begin{itemize}
	\item For all $\mug\in \Rc_{i-1}$, $H^1_B(H_i)_{\mug}=0$.
	\item There exists a natural graded map $\delta_i : H^0_B (H_i)\rightarrow H^2_B (H_{i+1})$ such that $(\delta_i)_\mug $ is surjective for all $\mug\in\Rc_{i-1}$ and is injective 
	for all $\mug\in\Rc_{i}$.
	\end{itemize}	
In particular, 
$$
H^0_B(H_i)_\mug \simeq H_B^2(H_{i+1})_\mug  \textrm{ for all } \mug\in \Rc_{i-1}\cap \Rc_{i}.
$$
\end{prop}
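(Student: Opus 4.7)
The plan is to deduce all three assertions simultaneously from the two convergent spectral sequences of the double complex $\check{C}^\bullet_B(K_\bullet)$, where $\check{C}^\bullet_B$ denotes the Cech complex with respect to the irrelevant ideal $B$. I adopt cohomological indexing throughout (writing $K^q:=K_{-q}$ for $q\leq 0$) and index the hypercohomology $\mathbb{H}^\bullet$ of the total complex so that a term of bidegree $(p,q)$ contributes to $\mathbb{H}^{p+q}$.

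The first step is to control the abutment via the spectral sequence obtained by taking Cech cohomology first, whose $E_1$ page is $E_1^{p,q}=H^p_B(K^q)=H^p_B(K_{-q})$. On the antidiagonal $p+q=-i$, the term in column $p$ is $H^p_B(K_{i+p})$, which vanishes at every $\mug\in\Rc_i$ by the hypothesis applied with $j=p$. Therefore $\mathbb{H}^{-i}_\mug=0$ for every $\mug\in\Rc_i$.

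The second step is to analyze the other spectral sequence, obtained by first taking the (vertical) Koszul cohomology. Its $E_2$ page is $E_2^{p,q}=H^p_B(H_{-q})$. Since each Koszul homology module $H_q$ is annihilated by $I$ and hence supported on $\Bc$, the hypothesis $\dim\Bc\leq 1$ together with the identification $H^i_B(-)\simeq H^{i-1}(\Bc,-)$ for $i\geq 2$ forces $H^p_B(H_q)=0$ for $p\geq 3$. Consequently, on the antidiagonal of total degree $-i$ only three $E_2$ entries can be nonzero, namely $H^0_B(H_i)$, $H^1_B(H_{i+1})$, and $H^2_B(H_{i+2})$ in columns $p=0,1,2$ respectively. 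The differential $d_2^{p,q}\colon E_2^{p,q}\to E_2^{p+2,q-1}$ leaves this antidiagonal nontrivially only via the map from $H^0_B(H_i)$ to $H^2_B(H_{i+1})$ (call it $\delta_i$), and reaches it nontrivially only via the map from $H^0_B(H_{i+1})$ to $H^2_B(H_{i+2})$ (which is $\delta_{i+1}$); all other $d_2$'s have source or target in $H^{\geq 3}_B$ and thus vanish. The same dimension bound shows that the higher differentials $d_r$ for $r\geq 3$ involving these three entries must vanish, so the sequence degenerates at $E_3$ on this antidiagonal. Hence $\mathbb{H}^{-i}$ carries a three-step filtration with associated graded pieces $\ker\delta_i$, $H^1_B(H_{i+1})$, and $\coker\delta_{i+1}$.

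Combining the two steps, the vanishing $\mathbb{H}^{-i}_\mug=0$ on $\Rc_i$ forces each of those three graded pieces to vanish on $\Rc_i$: thus $\delta_i$ is injective on $\Rc_i$, $H^1_B(H_{i+1})_\mug=0$ for $\mug\in\Rc_i$, and $\delta_{i+1}$ is surjective on $\Rc_i$. Replacing $i$ by $i-1$ in the latter two statements gives precisely the vanishing of $H^1_B(H_i)$ on $\Rc_{i-1}$ and the surjectivity of $\delta_i$ on $\Rc_{i-1}$, and the final isomorphism on $\Rc_{i-1}\cap\Rc_i$ is then immediate. The main obstacle I anticipate is simply the bookkeeping, chiefly verifying that the second spectral sequence collapses at $E_3$ along the relevant antidiagonal; this reduces to the vanishing $H^{\geq 3}_B=0$ for modules supported on $\Bc$, which in turn is exactly the content of the standing assumption $\dim\Bc\leq 1$.
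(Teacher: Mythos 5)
Your proof is correct and follows essentially the same strategy as the paper's: both arguments run the two spectral sequences of the double complex $\CcB^\bullet(K_\bullet)$, use $\dim\Bc\le 1$ to kill $H^{\ge 3}_B$ on Koszul homology, read off that the abutment on the relevant antidiagonal is filtered by $\ker\delta_i$, $H^1_B(H_{i+1})$ and $\coker\delta_{i+1}$, and force these to vanish on $\Rc_i$ by comparing with the Cech-first spectral sequence whose $E_1$ terms are $H^j_B(K_{i+j})$. Your more explicit bookkeeping (checking degeneration at $E_3$ and the vanishing of higher $d_r$) is a harmless expansion of what the paper leaves implicit.
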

\begin{proof} 
We consider the double complex obtained from the Koszul complex $K_\bullet$ by replacing each term by its associated Cech complex with respect to the ideal $B$: $\CcB^{\bullet}(K_\bullet)$. This double complex gives rise to two spectral sequences that both converge to the same limit. At the second step, the row-filtered spectral sequence is of the following form 
$$
\xymatrixrowsep{0.8em}
\xymatrixcolsep{1.2em}
\xymatrix{
\cdots  &H^0_B({H_{i+2}})  &H^0_B({H_{i+1}})\ar[ldd]_(.4){\delta_{i+1}} &H^0_B({H_i})  \ar[ldd]_(.4){\delta_i} &  H^0_B({H_{i-1}})  \ar[ldd]_(.4){\delta_{i-1}}&\cdots \\
\cdots  &H^1_B({H_{i+2}})&H^1_B({H_{i+1}}) &H^1_B({H_i})& H^1_B({H_{i-1}}) &\cdots   \\
\cdots  &H^2_B({H_{i+2}}) &H^2_B({H_{i+1}}) &H^2_B({H_i}) & H^2_B({H_{i-1}}) &\cdots  \\
\cdots  &0 & {0}  & 0 &  0 &  \cdots }.
$$

On the other hand, the terms of the column-filtered spectral sequence at the first stage on the diagonal whose total homology is filtered by $\ker(\delta_i)$, $\coker(\delta_{i+1})$ and $H^1_B(H_{i+1})$ are  $H^j_B(K_{i+j})$ for $j\in \NN$. It follows that :
$$
H^1_B(H_{i+1})_{\mug}=\ker(\delta_i)_{\mug}=\coker(\delta_{i+1})_{\mug}=0, \forall\mug\in \Rc_{i}.
$$
\end{proof}

\begin{rem}\label{rem:dim0Lemmavanishing} If $\dim(\Bc)=0$ then $H^2_B(H_i)=0$, for any $i$,  since $H_i$ is a $S/I$-module. Therefore, in this case Proposition \ref{prop:vanishHiBHj} shows that
$$
	H^0_B(H_i)_{\mug}=0,\ \forall \mug\in \Rc_{i}\quad \hbox{and}\quad H^1_B(H_i)_{\mug}=0,\ \forall \mug\in \Rc_{i-1}.
$$
\end{rem}

We now turn to properties on the cycles of the Koszul complex $K_\bullet (\Psi_0,\ldots,\Psi_s ;S)$.

\begin{prop}\label{prop:vanishHiBZj} Assume that $n_i>0$ for all integer $i$.  
Then, for any integer $p$ the following properties hold:
\begin{itemize}
	\item $H^0_B(Z_p)=H^1_B(Z_p)=0$,
	\item  $H^2_B(Z_p)_\mug \simeq H^0_B(H_{p-1})_\mug$ for all $\mug\in \Rc_{p-2}$,
	\item  $H^3_B(Z_p)_\mug=0$ for all $\mug\in \Rc_{p-2}\cap\Rc_{p-3}$,
	\item  $H^4_B(Z_p)_\mug = 0$ for all $\mug\in \Rc_{p-3}\cap\Rc_{p-4}$.
	\end{itemize}	
\end{prop}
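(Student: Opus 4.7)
My approach is to combine the two short exact sequences associated with the Koszul complex,
\begin{align*}
(A)&\quad 0 \to Z_p \to K_p \to B_{p-1} \to 0,\\
(B)&\quad 0 \to B_p \to Z_p \to H_p \to 0,
\end{align*}
with the results of Proposition \ref{prop:vanishHiBHj}. The basic observation used throughout is that, since every $n_i\geq 1$, the grade of $B$ on $S$ is at least two, whence $H^0_B(S)=H^1_B(S)=0$; as each $K_p$ is free over $S$, this gives $H^0_B(K_p)=H^1_B(K_p)=0$ for every $p$.

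The vanishings $H^0_B(Z_p)=H^1_B(Z_p)=0$ are then formal: from the LES of $(A)$, $H^0_B(Z_p)\hookrightarrow H^0_B(K_p)=0$, and $H^1_B(Z_p)\simeq H^0_B(B_{p-1})$ which embeds into $H^0_B(Z_{p-1})=0$ via the LES of $(B)$ at $p-1$. For the second assertion, the LES of $(A)$ combined with $H^1_B(K_p)=0$ and the hypothesis $H^2_B(K_p)_{\mug}=0$ on $\Rc_{p-2}$ gives $H^2_B(Z_p)_{\mug}\simeq H^1_B(B_{p-1})_{\mug}$; the LES of $(B)$ at $p-1$ then yields $H^1_B(B_{p-1})\simeq H^0_B(H_{p-1})$, using the vanishings just established.

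The last two assertions are the main obstacle, as they both require identifying the connecting map $\beta:H^2_B(Z_{p-1})\to H^2_B(H_{p-1})$ in the LES of $(B)$ at $p-1$ with the spectral-sequence differential $\delta_{p-2}:H^0_B(H_{p-2})\to H^2_B(H_{p-1})$ from Proposition \ref{prop:vanishHiBHj}. The relevant identification goes as follows: by the second assertion applied to $p-1$, on $\Rc_{p-3}$ one has an isomorphism $H^2_B(Z_{p-1})_{\mug}\simeq H^0_B(H_{p-2})_{\mug}$ built out of connecting maps $H^0_B(H_{p-2})\xrightarrow{\sim}H^1_B(B_{p-2})\xrightarrow{\sim}H^2_B(Z_{p-1})$, and the composition with $\beta$ is precisely the zigzag description of $\delta_{p-2}$ as a $d_2$-differential of the column-filtered spectral sequence of the double complex $\CcB^\bullet(K_\bullet)$, a standard piece of spectral-sequence yoga.

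Granting this, the third assertion follows: from $(A)$ we get $H^3_B(Z_p)_{\mug}\simeq H^2_B(B_{p-1})_{\mug}$ on $\Rc_{p-2}\cap\Rc_{p-3}$, and from $(B)$ at $p-1$ together with Proposition \ref{prop:vanishHiBHj} killing $H^1_B(H_{p-1})$ on $\Rc_{p-2}$, an injection $\alpha:H^2_B(B_{p-1})_{\mug}\hookrightarrow H^2_B(Z_{p-1})_{\mug}$; since $\mathrm{im}(\alpha)\subseteq \ker(\beta)$ by exactness and $\beta\simeq\delta_{p-2}$ is injective on $\Rc_{p-2}$, we conclude $H^2_B(B_{p-1})_{\mug}=0$ on $\Rc_{p-2}\cap\Rc_{p-3}$. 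The fourth assertion follows dually via surjectivity: using $H^3_B(H_{p-1})=0$ (since $\dim\Bc\leq 1$) and the third assertion applied to $p-1$ (yielding $H^3_B(Z_{p-1})_{\mug}=0$ on $\Rc_{p-3}\cap\Rc_{p-4}$), the LES of $(B)$ at $p-1$ gives $H^3_B(B_{p-1})_{\mug}\simeq\coker(\beta)_{\mug}$ on that region, which vanishes because $\beta\simeq\delta_{p-2}$ is surjective on $\Rc_{p-3}$ by Proposition \ref{prop:vanishHiBHj}; combining with the isomorphism $H^4_B(Z_p)_{\mug}\simeq H^3_B(B_{p-1})_{\mug}$ derived from $(A)$ concludes the proof.
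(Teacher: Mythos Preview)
Your proof is correct and reaches the same conclusions, but it proceeds along a genuinely different path from the paper. The paper argues with a single truncated complex
\[
\mathcal{C}_\bullet : 0\to Z_p \hookrightarrow K_p \to K_{p-1}\to \cdots \to K_0 \to 0
\]
and compares the two spectral sequences of the double complex $\CcB^\bullet(\mathcal{C}_\bullet)$: the column-filtered page carries the $H^i_B(Z_p)$ alongside the $H^i_B(K_j)$, the row-filtered page carries the $H^i_B(H_j)$ (with the same maps $\delta_i$ as in Proposition~\ref{prop:vanishHiBHj}), and all four assertions are read off from the equality of the two abutments, the third one using that $H^3_B(Z_p)_\mug$ is filtered by $H^1_B(H_{p-1})_\mug$ and $\ker(\delta_{p-2})_\mug$. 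By contrast, you unfold the Koszul complex into the two short exact sequences $(A)$ and $(B)$ and chase their long exact sequences in $H^\bullet_B$; the price is that the crucial step for the last two items is the identification of the connecting/induced map $\beta:H^2_B(Z_{p-1})\to H^2_B(H_{p-1})$ with the $d_2$-differential $\delta_{p-2}$ via the zigzag $H^0_B(H_{p-2})\xrightarrow{\sim}H^1_B(B_{p-2})\xrightarrow{\sim}H^2_B(Z_{p-1})\to H^2_B(H_{p-1})$. This identification is indeed the standard Cartan--Eilenberg description of $d_2$ in the column-filtered spectral sequence of $\CcB^\bullet(K_\bullet)$, so your argument is sound; but it would be worth saying explicitly that the two connecting maps you invert are precisely the $E_1$-edge isomorphisms coming from $H^0_B(K_\bullet)=H^1_B(K_\bullet)=0$, rather than leaving it as ``yoga''. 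The paper's route is shorter and more uniform (one double complex, one comparison), while yours is more elementary in that it avoids assembling $\mathcal{C}_\bullet$ and stays at the level of long exact sequences, at the cost of having to re-derive the meaning of $\delta_{p-2}$ by hand.
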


\begin{proof} We consider the complex $$\mathcal{C}_{\bullet}:= 0\rightarrow Z_p \hookrightarrow  K_{p} \rightarrow K_{p-1} \rightarrow \cdots  \rightarrow K_1 \rightarrow K_0 \rightarrow 0$$ 
which is built form the Koszul complex $K_\bullet$. This complex gives rise to the double ${{\breve{\mathscr{C}}}_B}^{\bullet}(\mathcal{C}_{\bullet})$, which itself gives rise to two spectral sequences that converge to the same limit. The column-filtered spectral sequence has a  first page is of the following form:

	$$
	\xymatrixrowsep{0.6em}
	\xymatrixcolsep{1em}
	\xymatrix{
	H^0_B(Z_p) &  0  & \cdots&  0 &  0 \\
	H^1_B(Z_p) &  0  & \cdots&  0 &  0 \\
	H^2_B(Z_p) \ar[r] & H^2_B(K_p)  \ar[r]&H^2_B(K_{p-1})  \ar[r]& H^2_B(K_{p-2})   \ar[r] & \cdots \\
	H^3_B(Z_p) \ar[r] & H^3_B(K_p)  \ar[r]&H^3_B(K_{p-1}) \ar[r] & H^3_B(K_{p-2})   \ar[r] & \cdots \\
	H^4_B(Z_p) \ar[r] & H^4_B(K_p)  \ar[r]&H^4_B(K_{p-1})  \ar[r]& H^4_B(K_{p-2})   \ar[r] & \cdots \\}
	$$
On the other hand, the row-filtered spectral sequence at the second step is of the following form
	\[
	\xymatrixrowsep{0.6em}
	\xymatrixcolsep{1em}
	\xymatrix{
	{0} & {0} & H^0_B({H_{p-1}})&H^0_B({H_{p-2}}) \ar[ldd]_(.4){\delta_{p-2}}&H^0_B({H_{p-3}}) \ar[ldd]_(.4){\delta_{p-3}}& \cdots \\
	{0} & {0} & H^1_B({H_{p-1}})&H^1_B({H_{p-2}})& H^1_B({H_{p-3}})&\cdots  \\
	{0} & {0} & H^2_B({H_{p-1}}) &H^2_B({H_{p-2}})& H^2_B({H_{p-3}}) &\cdots\\
	{0} & {0} & 0& 0 &0\\
	{0} & {0} & 0& 0 &0\\
	}
	\]
	Comparing these two, and using that they have same abutment, we get the claimed results for $H^0_B(Z_p)$, $H^1_B(Z_p)$, $H^2_B(Z_p)$ and $H^4_B(Z_p)$. For $H^3_B(Z_p)$, we get that $H^3_B(Z_p)_\mug$ is filtered by  $H^1_B(H_{p-1})_\mug$ and $\ker (\delta_{p-2})_\mug$ for all $\mug\in \Rc_{p-2}\cap\Rc_{p-3}$ and the conclusion follows from Proposition \ref{prop:vanishHiBHj}.
\end{proof}

When $s=3$ and the polynomials $\Psi_0,\ldots ,\Psi_3$ are of the same degree $\bm{\delta}$ (which is equal to $(\dg,e)$ in the setting of \eqref{eq:Psisecproof}), the corresponding approximation complex $\Zc_{\bullet}$ to these polynomials is of the form 
$$
0\rightarrow \mathcal{Z}_3\rightarrow \mathcal{Z}_2\rightarrow  \mathcal{Z}_1 \rightarrow \mathcal{Z}_0\rightarrow 0,
$$ 
with $\mathcal{Z}_i =Z_i[i\bm{\delta}]\otimes S(-i)$. 
Using the Cech complex construction, we can consider the double complex $\CcB^{\bullet}(\Zc_{\bullet})$ that gives rise to two canonical spectral sequences corresponding to the row and column filtrations of this double complex. 
The graded pieces of the spectral sequence at the first step for the column filtration are $H^p_B(Z_q)_{\mug +p\bm{\delta}}$. By Proposition \ref{prop:vanishHiBZj}, and under its assumptions, if $\mug \in \Rc_{-2}$ all these modules vanish except for $(p,q)=(2,2)$ and  for $(p,q)=(2,1)$. Hence, for $\mug \in \Rc_{-2}$, $\CcB^{\bullet}(\Zc_{\bullet})_\mug$ is quasi-isomorphic to the complex
$$
0\rightarrow H^0_B({H_{1}})_{\mug +2\bm{\delta}}\otimes S(-2)\rightarrow H^0_B({H_{0}})_{\mug +\bm{\delta}}\otimes S(-1)\rightarrow 0
$$
that is in turn isomorphic to
$$
0\rightarrow H^2_B({H_{2}})_{\mug +2\bm{\delta}}\otimes S(-2)\rightarrow H^2_B({H_{1}})_{\mug +\bm{\delta}}\otimes S(-1)\rightarrow 0
$$
for $\mug \in \Rc_{-1}\subseteq \Rc_{-2}$ by Proposition \ref{prop:vanishHiBHj}, assuming in addition that $\dim(\Bc)=1$. Consequently, our next goal is to control the vanishing of the graded components of $H_B^2(H_1)$ and $H_B^2(H_2)$. \blue{We recall that the notation $\EE(-)$ has been previously introduced in Notation \ref{not:E}.}

\begin{lem}\label{lem:H2BH2} Assume that $\dim(\Bc)=1$ and that the $s+1$ forms $\Psi_0,\ldots,\Psi_s$ are of the same degree $\bm{\delta}$. Let $\Cc$ be the unmixed curve component of $\Bc$ and set  $p:=s-\dim \PP +2$ and $\bm{\sigma}:=(s+1)\bm{\delta} -(n_1 +1,\cdots ,n_r +1)$.
Then, for all $\mug\in\ZZ^r$ we have 
$$H^2_B(H_p)_\mug \simeq H^0(\Cc,\Oc_\Cc(-\mug+\bm{\sigma}  ))^\vee.$$
In particular, if $\Cc$ has no section in degree $<\mug_0$, for some $\mug_0 \in \ZZ^r$,
	then $$H^2_B(H_p)_\mug =0 \textrm{ for all } \mug \in \EE ((s+1)\bm{\delta}-(n_1 ,\cdots ,n_r )-\mug_0).$$
\end{lem}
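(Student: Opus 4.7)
The plan is to identify $H^2_B(H_p)_\mug$ with a sheaf cohomology group on $\Cc$ via three successive dualities: first, interpret $H_p$ as an $\ext$ module using the self-duality of the Koszul complex; second, use Grothendieck duality on $\PP$ to match the restriction of $\widetilde{H_p}$ to $\Cc$ with the dualizing sheaf $\omega_\Cc$ up to an explicit twist; third, apply Serre duality on the curve. For the first step, the perfect pairing $K_i \otimes K_{s+1-i} \to K_{s+1} = S(-(s+1)\bm{\delta})$ yields $K_i^\vee \simeq K_{s+1-i}((s+1)\bm{\delta})$, whence $H^i(\mathrm{Hom}(K_\bullet,S)) \simeq H_{s+1-i}(K_\bullet)((s+1)\bm{\delta})$. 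The standard spectral sequence $E_2^{i,j}=\ext^j_S(H_i(K_\bullet),S) \Rightarrow H^{i+j}(\mathrm{Hom}(K_\bullet,S))$ collapses in our setting: with $n:=\dim\PP$, the Cohen-Macaulay ring $S$ together with $\dim\Bc\leq 1$ and $I$ of positive height force $\mathrm{grade}(I,S)=n-1$, so $H_i(K_\bullet)=0$ for $i>p=s-n+2$, while each $H_i$, being $I$-torsion, satisfies $\ext^j(H_i,S)=0$ for $j<n-1$. On the antidiagonal $i+j=n-1$ only $E_2^{0,n-1}=\ext^{n-1}_S(S/I,S)$ survives, and every $d_r$ ($r\geq 2$) entering or leaving this position has source or target in a zone ($i<0$ or $j<n-1$) where $E_r$ vanishes; hence $\ext^{n-1}_S(S/I,S)\simeq H_p(K_\bullet)((s+1)\bm{\delta})$.

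For the second step, sheafifying on $\PP$ and using $\omega_\PP=\Oc_\PP(-(n_1+1,\ldots,n_r+1))$ gives $\widetilde{H_p} \simeq \mathcal{E}xt^{n-1}_{\Oc_\PP}(\Oc_Y,\omega_\PP)((n_1+1,\ldots,n_r+1)-(s+1)\bm{\delta})$, with $Y=V(I)$. The sheaf $\mathcal{E}xt^{n-1}(\Oc_Y,\omega_\PP)$ is supported on the unmixed codim-$(n-1)$ part of $Y$, which by assumption is $\Cc$; embedded and lower-dimensional components of $Y$ contribute trivially. Since $\Cc$ is projective, $1$-dimensional and unmixed (hence Cohen-Macaulay), Grothendieck duality identifies this Ext sheaf on $\Cc$ with the dualizing sheaf $\omega_\Cc$, yielding the key intermediate fact $\widetilde{H_p}|_\Cc \simeq \omega_\Cc(-\bm{\sigma})$. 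As $H_p$ is $I$-torsion, its sheafification is supported on $\Bc$ and only the $1$-dimensional part contributes to $H^1$, so
$$H^2_B(H_p)_\mug \simeq H^1(\PP,\widetilde{H_p}(\mug)) = H^1(\Cc,\omega_\Cc(\mug-\bm{\sigma})),$$
and Serre duality on the Cohen-Macaulay curve $\Cc$ gives $H^0(\Cc,\Oc_\Cc(\bm{\sigma}-\mug))^\vee$, as claimed. The ``in particular'' statement follows by translating the no-section hypothesis: $H^0(\Cc,\Oc_\Cc(\bm{\sigma}-\mug))=0$ whenever $\bm{\sigma}-\mug<\mug_0$ componentwise, i.e.\ when $\mug\in\EE((s+1)\bm{\delta}-(n_1,\ldots,n_r)-\mug_0)$.

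The main obstacle I expect is the careful control of the spectral sequence in the first step, namely ruling out every $d_r$-differential ($r\geq 2$) into or out of the unique surviving term $\ext^{n-1}_S(S/I,S)$; this hinges on combining the grade computation with the Koszul vanishing $H_i=0$ for $i>p$. A secondary concern is the cleanness of the Grothendieck duality identification when $Y=V(I)$ has embedded or lower-dimensional components, which is resolved by restricting to the unmixed $1$-dimensional component $\Cc$ and exploiting that the relevant Ext sheaf is locally computed.
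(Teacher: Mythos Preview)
Your proof is correct and follows essentially the same strategy as the paper's: identify $\widetilde{H_p}$ (up to the twist by $\bm{\sigma}$) with the dualizing sheaf $\omega_\Cc$, pass from $H^2_B$ to $H^1$ on the curve, and finish with Serre duality. The only difference is presentational: where the paper invokes \cite[\S 1--3]{bruns1998cohen} and the local depth count (``the $\Psi_i$'s contain a regular sequence of length $s-1$, and of length $s$ unless $x\in\Cc$'') to obtain $\widetilde{H_p(\bm{\sigma})}\simeq\widetilde{\ext^{n-1}_S(S/I,\omega_S)}\simeq\widetilde{\ext^{n-1}_S(S/I_\Cc,\omega_S)}\simeq\omega_\Cc$ in one line, you unpack this via Koszul self-duality and the hyperext spectral sequence, and you make explicit the step $\mathcal{E}xt^{n-1}(\Oc_Y,\omega_\PP)\simeq\mathcal{E}xt^{n-1}(\Oc_\Cc,\omega_\PP)$. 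Your spectral-sequence bookkeeping is fine (the grade bound kills $E_2^{i,j}$ for $j<n-1$, the Koszul acyclicity kills $H_i$ for $i>p$, and the remaining differentials land in or come from zero), and your module-level identification $H_p((s+1)\bm{\delta})\simeq\ext^{n-1}_S(S/I,S)$ is in fact slightly stronger than what the paper states before sheafifying.
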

\begin{proof}  As locally at a closed point $x\in \PP$, the $\Psi_i$'s contain a regular sequence of length $s-1$, and of length $s$ unless $x\in \Cc$, by \cite[\S 1-3]{bruns1998cohen} we have the isomorphisms 
	$$\widetilde{H_p(\bm{\sigma}) }\simeq \widetilde{\ext^{s-1}_S (S/I,\omega_S)}\simeq \widetilde{\ext^{s-1}_S (S/I_\Cc ,\omega_S)}\simeq\omega_\Cc $$
from which we deduce that
\begin{equation}\label{eq:H2BwC}
	H^2_B(H_p)\simeq \bigoplus_{\mug} H^1( \Cc  ,\omega_\Cc (\mug -\bm{\sigma})).
\end{equation}
Now, applying Serre's duality Theorem \cite[Corollary 7.7]{Hart77} we get
$$H^1(\Cc ,\omega_\Cc (\mug -\bm{\sigma}))\simeq H^0(\Cc,\Oc_\Cc(-\mug +\bm{\sigma}))^\vee,$$
which concludes the proof.
\end{proof}

\begin{lem}\label{lem:H2BH1} In the setting of Lemma \ref{lem:H2BH2},  let $s=\dim \PP$ and let $I'$ be an ideal generated by $s$ general linear combinations of the $\Psi_i$'s. If $I^{\textrm{sat}}=I'^{\textrm{sat}}$ then for all $\mug \in \ZZ^r$ there exists an exact sequence 
$$
 H^0(\Cc,\Oc_\Cc(-\mug -\bm{\delta}+\bm{\sigma}))^\vee\rightarrow H^2_B(H_{1})_\mug \rightarrow H^2_B(S/I)_{\mug -\bm{\delta}}\rightarrow 0.
 $$
In particular,  if $\Cc$ has no section in degree $<\mug_0$, for some $\mug_0 \in \ZZ^r$, we have that 
$H^2_B(H_{1})_\mug =0$ for all $\mug$ such that $\mug \in \EE (s\bm{\delta}-(n_1 ,\cdots ,n_r )-\mug_0)$ and $\mug-\bm{\delta} \in \Rc_{-2}.$
\end{lem}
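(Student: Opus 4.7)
The plan is to compare the Koszul complex of the full sequence $\Psi_0,\ldots,\Psi_s$ with that of the sub-sequence $\ell_1,\ldots,\ell_s$ generating $I'$, and to exploit the hypothesis $I^{\mathrm{sat}}=I'^{\mathrm{sat}}$ to relate the relevant local cohomology modules. After completing $\ell_1,\ldots,\ell_s$ to a basis $\ell_1,\ldots,\ell_{s+1}$ of $\mathrm{span}_k(\Psi_0,\ldots,\Psi_s)$ and using that a change of basis induces an isomorphism of Koszul complexes, I would record the standard short exact sequence of complexes
$$0\to K^{(s)}_\bullet \to K_\bullet \to K^{(s)}_\bullet[-1](-\bm{\delta})\to 0,$$
where $K^{(s)}_\bullet:=K_\bullet(\ell_1,\ldots,\ell_s;S)$, whose connecting map on homology is multiplication by $\pm\ell_{s+1}$. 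Writing $A:=H_1(K^{(s)}_\bullet)$ and $N:=\mathrm{Ann}_{S/I'}(\ell_{s+1})$, the segment of the resulting long exact sequence around $H_1$ yields after direct identification of kernels and cokernels
$$0 \to A/\ell_{s+1}A \to H_1 \to N(-\bm{\delta}) \to 0. \qquad(\star)$$

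The hypothesis $I^{\mathrm{sat}}=I'^{\mathrm{sat}}$ forces $I/I'=\ell_{s+1}(S/I')$ to be $B$-torsion, so $H^i_B(I/I')=0$ for $i\geq 1$. Splitting the obvious sequences $0\to N\to S/I'\to I/I'\to 0$ and $0\to I/I'\to S/I'\to S/I\to 0$ then gives $H^2_B(N)\cong H^2_B(S/I')\cong H^2_B(S/I)$. For the computation of $H^2_B(A)$, observe that $(I')_x=I_x$ at every closed point $x\in\PP$ (the saturations agreeing), so the sub-sequence $\ell_1,\ldots,\ell_s$ satisfies the same local depth conditions as $\Psi_0,\ldots,\Psi_s$: grade $s$ outside $\Cc$ and grade $s-1$ on $\Cc$. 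For general linear combinations, a prime-avoidance argument in the Cox ring ensures that these $s$ forms contain a regular sub-sequence of length $s-1$ locally at every point, whence the proof of Lemma \ref{lem:H2BH2} transfers verbatim to $\ell_1,\ldots,\ell_s$ (the top Koszul homology shifts from $H_2$ to $H_1$, and $\bm{\sigma}$ becomes $\bm{\sigma}-\bm{\delta}$), yielding
$$H^2_B(A)_\mug \cong H^0(\Cc,\Oc_\Cc(-\mug+\bm{\sigma}-\bm{\delta}))^\vee.$$

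Finally, I apply $H^*_B$ to $(\star)$. Since $A$ (and hence its submodule $\ell_{s+1}A$ and its quotient $A/\ell_{s+1}A$) is supported on $\Cc$, the projective cohomology interpretation used throughout Section \ref{sec:proofmainthm} forces $H^i_B$ of all three to vanish for $i\geq 3$. The long exact sequence of $(\star)$ then gives the surjection $H^2_B(H_1)\twoheadrightarrow H^2_B(N)(-\bm{\delta})\cong H^2_B(S/I)(-\bm{\delta})$ with kernel equal to the image of $H^2_B(A/\ell_{s+1}A)$, and the long exact sequence of $0\to\ell_{s+1}A\to A\to A/\ell_{s+1}A\to 0$ yields the surjection $H^2_B(A)\twoheadrightarrow H^2_B(A/\ell_{s+1}A)$. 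Composing these two produces the claimed three-term exact sequence. The ``in particular'' assertion then follows from Definition \ref{def:negsec} applied to $H^0(\Cc,\Oc_\Cc(-\mug-\bm{\delta}+\bm{\sigma}))$, combined with the $i=-1$ case of Proposition \ref{prop:vanishHiBHj} (noting $H_{-1}=0$), which gives $H^2_B(S/I)_\mug=0$ whenever $\mug\in\Rc_{-2}$.

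The main technical obstacle I anticipate is the rigorous verification that the proof of Lemma \ref{lem:H2BH2} transfers to the sub-sequence $\ell_1,\ldots,\ell_s$, and in particular the Bertini-type prime-avoidance claim that general linear combinations yield a regular sub-sequence of length $s-1$ locally at every point of $\Cc$ in the multi-graded Cox setting. Beyond that, the argument is a routine diagram chase through several long exact sequences of local cohomology, relying uniformly on the vanishing $H^i_B(M)=0$ for $i\geq 3$ for modules supported on the one-dimensional scheme $\Cc$.
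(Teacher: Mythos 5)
Your proof is correct and follows essentially the same route as the paper. Both arguments rest on the long exact sequence comparing the Koszul homology of the full sequence $\Psi_0,\ldots,\Psi_s$ with that of the subsequence $\ell_1,\ldots,\ell_s$, on the fact that $I^{\mathrm{sat}}=I'^{\mathrm{sat}}$ forces the relevant ``error'' modules to be $B$-torsion, and on an application of Lemma \ref{lem:H2BH2} to the subsequence with the degree shift $\bm{\sigma}\mapsto\bm{\sigma}-\bm{\delta}$. The differences are organizational: the paper invokes Bruns--Herzog directly for a five-term exact sequence $0\to M\to H_1'\to H_1\to H_0'(-\bm{\delta})\to N\to 0$ with $M,N$ supported on $V(B)$, and extracts the degree-two row via a single spectral sequence of the associated Cech double complex; you instead split the long exact sequence into the short exact sequence $(\star)$ with $N=\mathrm{Ann}_{S/I'}(\ell_{s+1})$, then recover $H^2_B(N)\cong H^2_B(S/I')\cong H^2_B(S/I)$ through two auxiliary short exact sequences, and obtain the leftmost arrow by composing with the surjection $H^2_B(A)\twoheadrightarrow H^2_B(A/\ell_{s+1}A)$. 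Both routes are valid; the paper's is a little more compact, while yours makes the diagram chase fully explicit. Two small remarks: the Bertini/prime-avoidance digression is not really needed for the transfer of Lemma \ref{lem:H2BH2}, since the key input there is the grade of the local ideal $I'_x=I_x$ (an intrinsic invariant), not the existence of a regular subsequence among the specific given generators; and in the ``in particular'' part the vanishing provided by the $i=-1$ case of Proposition \ref{prop:vanishHiBHj} should be applied in degree $\mug-\bm{\delta}$, matching the shift $H^2_B(S/I)_{\mug-\bm{\delta}}$ in the statement, rather than in degree $\mug$.
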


\begin{proof} We will denote by $H_i'$ the $i$th homology module of the Koszul complex associated to $I'\subset R$. By \cite[Corollary 1.6.13]{bruns1998cohen} and \cite[Corollary 1.6.21]{bruns1998cohen} we have the following graded exact sequence
\begin{equation}\label{eq:exactseq}
0 \rightarrow M \rightarrow H_{1}' \rightarrow H_{1} \rightarrow H_{0}'(-\bm{\delta}) \rightarrow N \rightarrow 0
\end{equation}
with the property that the modules $M$ and $N$ are supported on $V(B)$, which implies that $H^i_B(M)=H^i_B(N)=0$ for $i\geq 1$.

Now, the column-filtered spectral sequence associated to the double complex obtained by replacing each term in the exact sequence \eqref{eq:exactseq} by its corresponding Cech complex converges to 0. The first step of this spectral sequence has three non zero lines and is of the following form 
	$$\begin{array}{ccccccccc}
	H^0_B(M) & \rightarrow & H^0_B(H_1') &\rightarrow & H^0_B(H_1) & \rightarrow & H^0_B(H_0')(-\bm{\delta}) & \rightarrow & H^0_B(N) \\
	0 & \rightarrow & H^1_B(H_1') &\rightarrow & H^1_B(H_1) & \rightarrow & H^1_B(H_0')(-\bm{\delta}) & \rightarrow & 0 \\
	0 & \rightarrow & H^2_B(H_1') &\rightarrow & H^2_B(H_1) & \rightarrow & H^2_B(H_0')(-\bm{\delta}) & \rightarrow & 0 \\
	\end{array}$$
which implies that the right part of the bottom line 
$$
 H^2_B(H_1')\rightarrow H^2_B(H_1) \rightarrow H^2_B(H_0')(-\bm{\delta}) \rightarrow  0 
 $$ is exact. 
 By Lemma \ref{lem:H2BH2}, $H^2_B(H_1')_\mug \simeq  H^0(\Cc,\Oc_\Cc(-\mug -\bm{\delta}+\bm{\sigma}))^\vee$ and,  by Proposition \ref{prop:vanishHiBHj}, $H^2_B(H_0')(-\bm{\delta})_\mug =H^2_B(S/I)(-\bm{\delta})_\mug=0$ for $\mug -\bm{\delta} \in \Rc_{-2}$. 
\end{proof}

Before closing this paragraph, we come back to the setting of Theorem \ref{thm:main} and provide explicit subsets $\Rc_i$, as these subsets are key ingredients for proving Theorem \ref{thm:main}. They can be derived from the known explicit description of the local cohomology of polynomial rings. 
More precisely, we have $s=3$ and the multi-homogeneous polynomials $\Psi_0,\ldots,\Psi_3$ are defined in the polynomial ring $S:=R=R_X\otimes R_1$ and are of degree $\bm{\delta}:=(\dg,e)\geq (\bm{1},1)$.

\begin{lem}\label{lem:locCohmoR} We have that $H^i_B(R)=0$ for all $i\neq 2,3,4$. In addition, if $X=\PP^2$ then $R_X=k[w,u,v]$ and we have that
$$H^2_B(R)\simeq R_X\otimes\check{R_1}, \ \ H^3_B(R)\simeq \check{R_X}\otimes{R_1}, \ \ H^4_B(R)\simeq \check{R_X}\otimes\check{R_1}$$
where $\check{R_1}=\frac{1}{\t t}k[\t^{-1},t^{-1}]$ and $\check{R_X}=\frac{1}{wuv}k[w^{-1},u^{-1},v^{-1}]$.

If $X=\PP^1\times\PP^1$ then $R_X=R_2\otimes R_3$, where $R_2=k[\u,u]$, $R_3=k[\v,v]$, and we have that
$$H^2_B(R)\simeq\bigoplus_{\substack{i=1\ldots 3, \\ \{i,j,k\}=\{1,2,3\}}} \check{R_i}\otimes R_j\otimes R_k,$$ 
$$H^3_B(R)\simeq\bigoplus_{\substack{i=1\ldots 3, \\ \{i,j,k\}=\{1,2,3\}}} R_i\otimes \check{R_j}\otimes \check{R_k},\ \ H^4_B(R)\simeq \check{R_1}\otimes \check{R_2}\otimes \check{R_3}$$
where $\check{R_2}$ and $\check{R_3}$ are defined similarly to $\check{R_1}$.
\end{lem}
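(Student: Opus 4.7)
The plan is to compute $H^i_B(R)$ by exploiting the product structure of the ambient space. With $r=2$ for $X=\PP^2$ and $r=3$ for $X=\PP^1\times\PP^1$, write $R=R^{(1)}\otimes_k \cdots \otimes_k R^{(r)}$ where each $R^{(j)}$ is a polynomial ring in $n_j+1$ variables (so the tuples are $(3,2)$ or $(2,2,2)$ respectively). Correspondingly $B = B^{(1)}\cdots B^{(r)}$ with $B^{(j)}$ the irrelevant ideal of $R^{(j)}$, and the generators of distinct $B^{(j)}$'s involve pairwise disjoint sets of variables. I would reduce the problem to the cohomology of the open complement and then apply a K\"unneth formula.

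Set $U := \spec(R)\setminus V(B)$. Because the ideals $B^{(j)}$ live in disjoint variable sets,
$$U \;\cong\; U_1\times_k\cdots\times_k U_r, \qquad U_j := \spec(R^{(j)})\setminus V(B^{(j)}),$$
each factor being a punctured affine space of dimension $n_j+1\geq 2$. The long exact sequence of local cohomology, together with the vanishing of higher quasi-coherent cohomology on the affine scheme $\spec R$, identifies $H^i_B(R)\cong H^{i-1}(U,\mathcal{O}_U)$ for all $i\geq 2$ and gives $H^0_B(R)=H^1_B(R)=0$ since each $B^{(j)}$ has height at least $2$. Applying the K\"unneth formula to $U$ (verified via the product of the finite $\check C$ech covers of the $U_j$'s by principal open sets $D(x)$, $x$ a variable of $R^{(j)}$) yields the multi-graded isomorphism
$$H^\ell(U,\mathcal{O}_U) \;\cong\; \bigoplus_{i_1+\cdots+i_r=\ell} H^{i_1}(U_1,\mathcal{O}_{U_1})\otimes_k\cdots\otimes_k H^{i_r}(U_r,\mathcal{O}_{U_r}).$$
Each factor carries the classical punctured-affine-space cohomology, namely $H^0(U_j,\mathcal{O}_{U_j})=R^{(j)}$, $H^{n_j}(U_j,\mathcal{O}_{U_j})=\check R^{(j)}$, and all other cohomology groups vanish (this is a restatement of the computation of $H^\bullet_{B^{(j)}}(R^{(j)})$ via the standard \v Cech complex on the irrelevant cover).

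The conclusion is then a matter of assembling the summands. For $X=\PP^2$ (so $r=2$, $(n_1,n_2)=(2,1)$) the nonzero K\"unneth pieces occur only in total degrees $\ell=0,1,2,3$, and via the shift $H^{i}_B(R)=H^{i-1}(U,\mathcal{O}_U)$ they match the three claimed descriptions of $H^2_B$, $H^3_B$, $H^4_B$, with $H^i_B(R)=0$ outside $\{2,3,4\}$. For $X=\PP^1\times\PP^1$ the same bookkeeping with $r=3$ and all $n_j=1$ produces exactly the three sums displayed in the statement: a single nonzero factor per summand in $H^2_B$, two nonzero factors in $H^3_B$, and all three nonzero in $H^4_B$. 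The only technical subtlety is the justification of the K\"unneth isomorphism in this form; the cleanest route is the product \v Cech complex mentioned above, which reduces the verification to an elementary localization calculation over a field.
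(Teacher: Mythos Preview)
The paper does not actually prove this lemma; it simply cites \cite[\S 6]{BOTBOL2011381}. Your argument is correct and is one of the standard ways to establish the result: factor the complement $U=\spec(R)\setminus V(B)$ as a product of punctured affine spaces, pass from $H^i_B(R)$ to $H^{i-1}(U,\mathcal{O}_U)$ via the long exact sequence (using that $\spec R$ is affine), and apply the K\"unneth formula together with the classical computation of the cohomology of a punctured affine space. The bookkeeping you describe matches the stated decompositions exactly in both cases $X=\PP^2$ and $X=\PP^1\times\PP^1$, and the approach in the cited reference is of the same flavor (a \v Cech/tensor-product decomposition of local cohomology).

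One minor remark: the phrase ``since each $B^{(j)}$ has height at least $2$'' as a justification for $H^0_B(R)=H^1_B(R)=0$ is a little elliptical, because what is really required is $\depth_B(R)\geq 2$ for the product ideal $B=B^{(1)}\cdots B^{(r)}$, not merely a height condition on the factors. This is nonetheless true here (for instance, the two monomials obtained by taking the product of one variable from each block, with distinct choices in each block, form a regular sequence in $B$), and in any case it also drops out of your own K\"unneth computation in degree $0$, which gives $H^0(U,\mathcal{O}_U)\cong R$ and hence forces both $H^0_B(R)$ and $H^1_B(R)$ to vanish via the exact sequence you invoke.
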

\begin{proof} See for instance \cite[\S 6]{BOTBOL2011381}.
\end{proof}

Using the above lemma, we define the following subsets $\Rc_i$.
\begin{defn}\label{lem:vanishHiKj} 
	With notations as above, 

$\bullet$ If $X =\PP^2$ we set 
\begin{itemize}
	\item[-] $\Rc_i:=\ZZ^2 \textrm{ for all } i \notin [-4;2],$ 
	\item[-] $\Rc_{-4}:=\EE(-\infty,-1)\cup \EE(-2,-\infty),$
	\item[-] $\Rc_{-3}:=\EE(-2,e-1)\cup \EE(d-2,-\infty),$
	\item[-] for $i=-2,-1,0,$
	$$\Rc_i:=\EE ((i+3)d-2,(i+4)e-1)\cup 
		\EE ((i+4)d-2,(i+2)e-1)\subseteq \ZZ^2,$$
	\item[-] $ \Rc_1:=\EE(4d-2,3e-1),$ 
	\item[-] $\Rc_2:=\EE(-\infty,4e-1).$
\end{itemize}	

$\bullet$ If $X =\PP^1\times \PP^1$ we set
\begin{itemize}
	\item[-] $\Rc_i:=\ZZ^3 \textrm{ for all } i \notin [-4;2],$ 
	\item[-] $\Rc_{-4}:=\EE(-1,-\infty,-\infty)\cup \EE(-\infty,-1,-\infty)\cup \EE(-\infty,-\infty,-1),$
	\item[-] $\Rc_{-3}:=\EE(d_1-1,-1,-1)\cup \EE(-1,d_2-1,-1)\cup \EE(-1,-1,e-1),$
	\item[-] for $i=-2,-1,0,$
	$$\Rc_i:=\bigcup_{(a,b,c)\vert \{ a,b,c\} =\{ 2,3,4\} }\EE ((a+i)d_1-1,(b+i)d_2-1,(c+i)e-1)\subseteq \ZZ^3,$$
	\item[-] $\Rc_1:=\EE(4d_1-1,4d_2-1,3e-1)\cup\EE(4d_1-1,3d_2-1,4e-1)\cup\EE(3d_1-1,4d_2-1,4e-1),$
	\item[-] $ \Rc_2:=\EE(4d_1-1,4d_2-1,4e-1).$
\end{itemize}
\end{defn}

It is straightforward to check that these subsets satisfy to the properties required in Proposition \ref{prop:vanishHiBHj}. We notice that $\Rc_p\subseteq \Rc_{p-1}$ for all $p\leq 1$, but we also emphasize that not all these subsets are the largest possible ones in view of Lemma \ref{lem:locCohmoR}, as we have restricted ourselves to subsets that fit our needs to prove Theorem \ref{thm:main}.

\subsection{Proof of Theorem \ref{thm:main}}
We focus on the difficult case of this theorem, namely the case where the base locus $\Bc$ is not finite, which corresponds to the item (b) in its statement. If $\Bc$ is finite, then the proof simplifies as explained in Remark \ref{rem:dim0Lemmavanishing} and gives the same conclusion. In what follows, we take again the notation of \eqref{eq:Psisecproof} and Theorem \ref{thm:main}.

\medskip

We consider the approximation complex $\Zc_{\bullet}$ associated to the sequences of homogeneous polynomials $\Psi_0$, $\Psi_1$, $\Psi_2$ and $\Psi_3$. It inherits the multi-graded structure of $R$ and it has an additional grading with respect to $\PP^3=\Proj(k[x_0,x_1,x_2,x_3])$.

	 Let $\mathfrak{p}$ be a point in $\mathrm{Spec}(k[x_0,x_1,x_2,x_3])$. %such that  $\dim(\Sym_R(I)\otimes\kappa(\pp))\leq 2$ 
The specialization of the approximation complex $\Zc_{\bullet}$ at the point $\mathfrak{p}$ yields the complex $\Zc^\pp_\bullet:=\Zc_{\bullet}\otimes\kappa(\mathfrak{p})$ which is of the form 
$$0\rightarrow \mathcal{Z}_3\otimes\kappa(\mathfrak{p})\rightarrow \mathcal{Z}_2\otimes\kappa(\mathfrak{p})\rightarrow  \mathcal{Z}_1 \otimes\kappa(\mathfrak{p})\rightarrow \mathcal{Z}_0\otimes\kappa(\mathfrak{p})\rightarrow 0.$$ 
Notice that $H_0(\Zc_{\bullet})=\Sym_R(I)$ and $H_0(\Zc^\pp_{\bullet})=\Sym_R(I)\otimes\kappa(\pp)$.

Now, using the Cech complex construction, we can consider the double complexes $\CcB^{\bullet}(\Zc_{\bullet})$ and  $\CcB^{\bullet}(\Zc^\pp_{\bullet})$ that both give rise to two canonical spectral sequences corresponding to the row and column filtrations of these double complexes. 
As $\Cc$ (the curve component of the base locus $\Bc$) is almost a complete intersection at its generic points, $H^i_B(H_j(\Zc_{\bullet}))=0$ for $i>1$ and $j>0$. 
Let $Y\subset \Proj (R/I)$ be the locus where $I$ is not locally an almost complete intersection. Then $Y$ is either empty or of dimension zero. As $\Zc_{\bullet}$ is exact off $Y$, $\Zc^\pp_{\bullet}$ is exact off $Y\cup \pi^{-1}(\mathfrak{p})$. Therefore, it follows from our hypothesis that one also has $H^i_B(H_j(\Zc^\pp_{\bullet}))=0$ for $i>1$ and $j>0$.
From these considerations, we deduce that the row-filtered spectral sequence of the double complex $\CcB^{\bullet}(\Zc_{\bullet})$ converges at the second step and is equal to
$$	
\xymatrixrowsep{0.8em}
\xymatrixcolsep{0.8em}
\xymatrix{
{0} & H^0_B{H_3(\Zc_{\bullet})} & H^0_B(H_2(\Zc_{\bullet})) & H^0_B(H_1(\Zc_{\bullet})) & H^0_B(\Sym_R(I))\\
{0} & H^1_B{H_3(\Zc_{\bullet})} & H^1_B(H_2(\Zc_{\bullet})) & H^1_B(H_1(\Zc_{\bullet})) & H^1_B(\Sym_R(I))\\
{0} & 0 & 0 & 0 & H^2_B(\Sym_R(I))\\
{0} & 0 & 0 & 0 & 0}
$$

On the other hand, the column-filtered spectral sequence of the double complex $\CcB^{\bullet}(\Zc_{\bullet})$ at the first step is  
$$
\xymatrixrowsep{0.8em}
\xymatrixcolsep{1em}
\xymatrix{
0 & 0 & 0 & 0 & 0&0\\
{0} & 0 & 0 & 0 & 0&0\\
{0} \ar[r]& H^2_B({\Zc_3}) \ar[r]& H^2_B({\Zc_2})\ar[r]& H^2_B({\Zc_1})\ar[r]& H^2_B({\Zc_0})\ar[r]&0\\
{0} \ar[r]& H^3_B({\Zc_3}) \ar[r]& H^3_B({\Zc_2})\ar[r]& H^3_B({\Zc_1})\ar[r]& H^3_B({\Zc_0})\ar[r]&0\\
{0} \ar[r]& H^4_B({\Zc_3}) \ar[r]& H^4_B({\Zc_2})\ar[r]& H^4_B({\Zc_1})\ar[r]& H^4_B({\Zc_0})\ar[r]&0\\}
$$
It follows that $H_3(\Zc_\bullet )=0$ and $ H^0_B(H_2(\Zc_\bullet))=0$. Moreover, for all degree $(\mug,\nu) \in \Rc_{-2}$ we have $H^i_B({\Zc_j})_{(\mug,\nu)} =0$ unless $i=2$ and $j\in \{ 1,2\}$. 
Let 
\begin{equation}\label{eq:HOBHOexactseq}
\xymatrix{H^0_B({H_1})_{(\mug,\nu) +2(\dg,e)}\otimes S[-2]\ar^{\phi_{(\mug,\nu)}}[r]&  H^0_B({H_0})_{(\mug,\nu) +(\dg,e)}\otimes S[-1]\\}	
\end{equation}
be the degree $(\mug,\nu)$ component of the only potentially non zero map of this graded piece of the double complex. Then, 
\begin{itemize}
	\item $H^1_B(\Sym_R(I))_{(\mug,\nu)} =0$ if and only if $\phi_{(\mug,\nu)}$ is surjective,
	\item  $H^0_B(\Sym_R(I))_{(\mug,\nu)} =H^1_B(H_1(\Zc_{\bullet}))_{(\mug,\nu)}=0$ if and only if $\phi_{(\mug,\nu)}$ is injective.	
\end{itemize}

The same arguments apply to  $\CcB^{\bullet}(\Zc^\pp_{\bullet})$ and show that for ${(\mug,\nu)} \in \Rc_{-2}$ :
\begin{itemize}
	\item $H^1_B(\Sym_R(I)\otimes \kappa(\mathfrak{p}))_{(\mug,\nu)} =0$ if and only if $\phi_{(\mug,\nu)} \otimes \kappa(\mathfrak{p})$ is surjective,
	\item  $H^0_B(\Sym_R(I)\otimes \kappa(\mathfrak{p}))_{(\mug,\nu)} =H^1_B(H_1(\Zc^\pp_{\bullet}))_{(\mug,\nu)}=0$ if and only if $\phi_{(\mug,\nu)}\otimes \kappa(\mathfrak{p})$ is injective.	
\end{itemize}	
Furthermore, for all ${(\mug,\nu)} \in \Rc_{-1}$ the map $\phi_{(\mug,\nu)}$ identifies with the map
$$
\xymatrix{H^2_B({H_2})_{{(\mug,\nu)} +2(\dg,e)}\otimes S[-2]\ar[r]&  H^2_B({H_1})_{{(\mug,\nu)} +(\dg,e)}\otimes S[-1]\\}
$$
and
the conclusion follows from Lemma \ref{lem:H2BH2} and Lemma \ref{lem:H2BH1}. Indeed, assume first that $X=\PP^2$, then we need to have $(\mug,\nu) \in \Rc_{-1}$ and Lemma \ref{lem:H2BH2} and Lemma \ref{lem:H2BH1} both require additionally that 
$$(\mug,\nu) \in \EE(2d-2-\mu_X,2e-1-\mu_1),$$
where $\mug_0=(\mu_X,\mu_1)$ is such that the unmixed curve component $\Cc$ of the base locus has no section in degree $<\mug_0$. Observing that $\Rc_{-1}$ is precisely the expected region \eqref{eq:thmain1-P2} to prove our theorem, we must have that 
$$ \Rc_{-1} \subseteq \EE(2d-2-\mu_X,2e-1-\mu_1).$$
This latter inclusion holds if $\mu_X\geq 0$ and $\mu_1\geq e$. From here, the theorem follows as we assumed that $\Cc$ has no section in degree $<(\mu_X,\mu_1)=(0,e)$. 

Now, consider the case $X=\PP^1\times\PP^1$. Similarly, we need to have $(\mug,\nu) \in \Rc_{-1}$ and both Lemma \ref{lem:H2BH2} and Lemma \ref{lem:H2BH1} require additionally that 
$$(\mug,\nu) \in \EE(2d_1-1-{\mu_1},2d_2-1-{\mu_2},2e-1-\mu_3),$$
where $\mug_0=(\mu_1,\mu_2,\mu_3)$. If we impose, as in the case $X=\PP^2$, that $\Rc_{-1}$ is contained in the above region, then we must have $\mu_1\geq d_1$, $\mu_2\geq d_2$ and $\mu_3\geq e$. In order to have a weaker assumption on the global sections on the curve $\Cc$, we preferably choose to set $\mu_1=\mu_2=0$ and then restrict $\Rc_{-1}$ accordingly, which gives the claimed region \eqref{eq:thmain1-P1P1}. 

\subsection{Residual of a complete intersection curve}\label{subsec:residual} 
The proof of Theorem \ref{thm:main} completes the proof of Theorem \ref{thm:regstabilization}, but it still remains 
to prove Theorem \ref{thm:mainresidual}. For that purpose, we proceed as in the proof of Theorem \ref{thm:main} but with a different argument to control the vanishing of some graded components of the module $H^0_B(H_0)$ that appears in \eqref{eq:HOBHOexactseq}. 
We maintain the notation of the previous paragraph, \blue{including Notation \ref{not:E}.}

 \begin{prop}\label{prop:H0BH0} Let $J$ be an homogeneous ideal in $R$ generated by a regular sequence $(g_1,g_2)$ such that $I\subset J$ and $(I:J)$ defines a finite subscheme in $X\times \PP^1$. Denote by $(\mg_1,n_1)$, resp.~$(\mg_2,n_2)$, the degree of $g_1$, resp.~$g_2$, and define the integer $\kappa:=\min(e,n_1+n_2)$. Then,  
 $H^0_B(H_0(K_\bullet))_{(\mug,\nu)}=0$ for all $(\mug,\nu)$ satisfying to the following conditions:
 \begin{itemize}
 	\item if $X=\PP^2$, 
$$(\mu,\nu)\in \EE(4d-2-\min\{m_1+m_2,2m_1,2m_2\},3e-1-\kappa).$$
\end{itemize}

\begin{itemize}
 	\item if $X=\PP^1\times \PP^1$, 
	$$	(\mug,\nu)\in \EE(4d_1-1-\varepsilon_1,4d_2-1-\varepsilon_2',3e-1-\kappa)\cup 
		\EE(4d_1-1-\varepsilon_1',4d_2-1-\varepsilon_2,3e-1-\kappa)$$
	where for $i=1,2,$ 
	\begin{align*}
		\varepsilon_i &:=\min \{ 2m_{1,i},m_{1,i}+m_{2,i},2m_{2,i}\} , \\
		\varepsilon'_i &:= \min \{ 2m_{1,i}+m_{2,i},m_{1,i}+2m_{2,i},d_i+m_{1,i},d_i+m_{2,i}\}. 
	\end{align*}
 \end{itemize} 
 \end{prop}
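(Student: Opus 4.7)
The plan is to use the inclusion $I\subset J$ through the short exact sequence
$$0 \to J/I \to R/I \to R/J \to 0,$$
noting that $H_0(K_\bullet)=R/I$, and to control the two flanking terms in the induced long exact sequence in $B$-local cohomology
$$\cdots \to H^0_B(J/I) \to H^0_B(R/I) \to H^0_B(R/J) \to H^1_B(J/I) \to \cdots.$$
Vanishing of $H^0_B(R/I)_{(\mug,\nu)}$ will follow if both $H^0_B(R/J)_{(\mug,\nu)}$ and $H^0_B(J/I)_{(\mug,\nu)}$ vanish.

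\textbf{Step 1 (Koszul resolution of $R/J$).} Since $(g_1,g_2)$ is a regular sequence in $R$, it resolves $R/J$ via the Koszul complex
$$0 \to R(-\mg_1-\mg_2,-n_1-n_2) \to R(-\mg_1,-n_1)\oplus R(-\mg_2,-n_2) \to R \to R/J \to 0.$$
Applying the \v{C}ech complex $\CcB^\bullet$ and analyzing the column-filtered spectral sequence, together with the explicit description of $H^i_B(R)$ from Lemma \ref{lem:locCohmoR} (in particular $H^0_B(R)=H^1_B(R)=0$), one identifies $H^0_B(R/J)_{(\mug,\nu)}$ with a subquotient of $H^2_B(R)(-\mg_1-\mg_2,-n_1-n_2)_{(\mug,\nu)}$. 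For $X=\PP^2$ this forces $\nu \geq n_1+n_2-1$; combined with the baseline bound $\nu \geq 3e-1$ inherited from Theorem \ref{thm:main}, this produces the shift $\kappa=\min(e,n_1+n_2)$ in the $\nu$-direction, and an analogous analysis handles the three symmetric sub-regions in the $X=\PP^1\times\PP^1$ case.

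\textbf{Step 2 (presentation of $J/I$).} Writing each defining form of $I$ as $\Psi_k = a_{k1}g_1 + a_{k2}g_2$, the module $J/I$ fits into the presentation
$$R(-\mg_1-\mg_2,-n_1-n_2) \oplus \bigoplus_{k=0}^{3}R(-\dg,-e) \xrightarrow{M} R(-\mg_1,-n_1)\oplus R(-\mg_2,-n_2) \to J/I \to 0,$$
whose first column encodes the Koszul syzygy $(g_2,-g_1)^T$ and whose four other columns are $(a_{k1},a_{k2})^T$. The annihilator of $J/I$ is exactly $(I:J)$, which by hypothesis cuts out a zero-dimensional subscheme of $X\times\PP^1$. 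A spectral-sequence argument on the \v{C}ech bicomplex attached to this presentation, modeled on Lemmas \ref{lem:H2BH2} and \ref{lem:H2BH1} but with the role of the residual curve $\Cc$ played here by the complete intersection $V(J)$, bounds $H^0_B(J/I)$ in the asserted bidegrees. The shifts $\min\{m_1+m_2,2m_1,2m_2\}$ (and $\varepsilon_i,\varepsilon_i'$ in the $\PP^1\times\PP^1$ case) arise from taking $2\times 2$ minors of $M$, whose multi-degrees are the three possible combinations of pairs from $\{(\mg_1,n_1),(\mg_2,n_2)\}$; one then optimizes over these choices.

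\textbf{Main obstacle.} The hard part is Step 2: because the $a_{ij}$'s are general polynomials, the higher syzygies of $J/I$ are not presented by a Koszul-type complex, and the spectral-sequence bookkeeping for the $B$-local cohomology of the associated bicomplex is delicate. In the $\PP^1\times\PP^1$ case this difficulty is amplified since each projective factor receives its own vanishing shift via $\varepsilon_i$ and $\varepsilon_i'$, giving rise to the asymmetric union of regions $\EE(\cdots)$ in the statement. Intersecting the two vanishing regions from Steps 1 and 2 then yields the claimed bounds.
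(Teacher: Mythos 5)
Your Step 1 matches the paper exactly: same short exact sequence $0\to J/I\to R/I\to R/J\to 0$, same long exact sequence, and the same reduction of $H^0_B(R/J)$ to the vanishing of $H^2_B(R)$ in degree $(\mug,\nu)-(\mg_1+\mg_2,n_1+n_2)$ via the Koszul resolution of $R/J$. You also correctly identify the relevant presentation matrix $\varphi: R(-\mg_1-\mg_2,-n_1-n_2)\oplus\bigoplus_{k=0}^3 R(-\dg,-e)\to R(-\mg_1,-n_1)\oplus R(-\mg_2,-n_2)$ of $J/I$.

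However, you leave a genuine gap in Step 2, and you say so yourself: you do not actually produce the complex on which the spectral-sequence argument runs. The paper's key missing ingredient is the \emph{Buchsbaum-Rim complex} $C_\bullet$ attached to $\varphi$. This is a concrete finite complex of free graded modules (with terms $C_i=S_{i-2}(F_1^*)\otimes\wedge^{i+1}(F_1')(\bm{\sigma})$ for $i\geq 2$, and $C_1=F_1'$, $C_0=F_1$) whose $H_0$ is $\coker\varphi = J/I$ and whose higher homology is supported on the annihilator of $J/I$, namely $(I:J)$, which is assumed to cut out a zero-dimensional subscheme. Because the free terms $C_i$ have explicitly known twists — built from the bidegrees $(\mg_1,n_1)$, $(\mg_2,n_2)$, $(\dg,e)$ — one reads off from Lemma \ref{lem:locCohmoR} precisely which $(\mug,\nu)$ kill $H^2_B(C_2)$, $H^3_B(C_3)$, $H^4_B(C_4)$, and the row-filtered spectral sequence of $\CcB^\bullet(C_\bullet)$ (which collapses on the second page because all higher homology of $C_\bullet$ has zero-dimensional support) then gives $H^0_B(J/I)_{(\mug,\nu)}=0$. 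Your instinct about ``$2\times 2$ minors of $M$'' is pointing in the right direction — the Buchsbaum-Rim complex is exactly the structured way to exploit those minors — but invoking Lemmas \ref{lem:H2BH2} and \ref{lem:H2BH1} ``with $\Cc$ replaced by $V(J)$'' is not a valid substitute: those lemmas rely on $\Cc$ being a curve that is generically an almost complete intersection inside the Koszul homology of $I$, a completely different mechanism that does not transport to $J/I$. Without naming and using the Buchsbaum-Rim complex (or an equivalent explicit finite free complex whose homology away from $H_0$ has zero-dimensional support), Step 2 does not close, and in particular the specific shifts $\varepsilon_i, \varepsilon_i'$ in the $\PP^1\times\PP^1$ case cannot be derived.
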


\begin{proof} Since $I\subset J$, we have the canonical exact sequence
	$$ 0 \rightarrow J/I \rightarrow R/I \rightarrow R/J \rightarrow 0$$
and hence, by the associated long exact sequence of local cohomology we deduce that 
$$H^0_B(H_0(K_\bullet))_{(\mug,\nu)}=H^0_B(R/I)_{(\mug,\nu)}=0$$ 
if both 
	$H^0_B(R/J)_{(\mug,\nu)}=0$ and $H^0_B(J/I)_{(\mug,\nu)}=0$. Our objective is to analyze these two latter conditions. 
		
	Set $\bm{\sigma}:=(\mg_1+\mg_2,n_1+n_2)$. Since $J=(g_1,g_2)$ is generated by a regular sequence, its associated Koszul complex $K^J_\bullet$, 
	$$ 0 \rightarrow F_2=R(-\bm{\sigma}) \rightarrow F_1=\oplus_{i=1}^2 R(-(\mg_i,n_i)) \rightarrow F_0=R,$$	
	is acyclic. Therefore, the two classical spectral sequences associated to the double complex \blue{$\CcB^\bullet(K^J_\bullet)$} shows that $H^0_B(R/J)_{(\mug,\nu)}=0$ for all $(\mug,\nu)$ such that 
	\begin{equation}\label{eq:H0NH0-cond1}
		H^2_B(R)_{(\mug,\nu)-\bm{\sigma}}=0.
	\end{equation}
	
	Now, from the inclusion $I\subset J$ the decomposition of the $\Psi_j's$ on the $g_1,g_2$ gives the $2\times 4$-matrix $H$ such that
	$$ \left( \Psi_0 \ \Psi_1 \ \Psi_2 \ \Psi_3 \right) = \left( g_1 \ g_2 \right) 
	\left(
	\begin{array}{cccc}
	h_{0,1} & h_{1,1} & h_{2,1} & h_{3,1} \\
	h_{0,2} & h_{1,2} & h_{2,2} & h_{3,2} \\
	\end{array}
	\right)= \left( g_1 \ g_2 \right) H$$ 
	and which corresponds to the homogeneous map 
	$$K_1=R(-(\dg,e))^4 \xrightarrow{H} F_1=R(-(\mg_1,n_1))\oplus R(-(\mg_2,n_2)).$$
	From here, we obtain a finite free graded presentation of $J/I$, namely
	$$ F_1'=F_2 \oplus K_1 \xrightarrow{\varphi} F_1  
	\xrightarrow{(g_1,g_2)} J/I \rightarrow 0$$
	where the map $\varphi:F_1'\rightarrow F_1$ is defined by the matrix
	   $$\left(\begin{array}{ccccc}
	   -g_2 & h_{0,1} & h_{1,1} & h_{2,1} & h_{3,1} \\
	   g_1 & h_{0,2} & h_{1,2} & h_{2,2} & h_{3,2} \\
	   \end{array}\right).$$ 
	Consider the Buchsbaum-Rim complex $C_\bullet$ associated to $\varphi$; it is of the form (see \cite[Appendix 2.6]{Eis95} and \cite[\S 2]{CFN08} for the graded version with the appropriate shifting in degrees)
	\begin{multline*}
	 C_4= S_2(F_1^*)\otimes \wedge^5(F_1')(\bm{\sigma}) \rightarrow C_3= S_1(F_1^*)\otimes \wedge^4(F_1')(\bm{\sigma}) \rightarrow \\
C_2= S_0(F_1^*)\otimes \wedge^3(F_1')(\bm{\sigma}) \rightarrow C_1=F_1' \xrightarrow{\varphi} C_0=F_1.
	\end{multline*}
	The homology of $C_\bullet$ is supported on $\mathrm{ann}_R(J/I)=(I:_R J)$, which is assumed to define a finite subscheme in $X\times \PP^1$. Therefore, the spectral sequence corresponding to the row filtration of the double complex \blue{$\CcB^\bullet(K_\bullet)$} abuts at the second step, with the term $H^0_B(J/I)$ on the principal diagonal. Comparing with the spectral sequence corresponding to the column filtration, we deduce that  $H^0_B(J/I)_{(\mug,\nu)}=0$ for all $(\mug,\nu)$ such that 
	\begin{equation}\label{eq:H0NH0-cond2}
	H^2_B(C_2)_{(\mug,\nu)}= H^3_B(C_3)_{(\mug,\nu)}= H^4_B(C_4)_{(\mug,\nu)}=0.
	\end{equation}
	Moreover, from the definition of the free graded $R$-modules $C_i$, $i=2,3,4$, we have the graded isomorphisms
	$$ C_2 \simeq R(-3(\dg,e)+\bm{\sigma})^4 \oplus R(-2(\dg,e))^6,$$
	$$C_3 \simeq \oplus_{i=1}^2 R(-4(\dg,e)+(\mg_i,n_i)+\bm{\sigma}) \oplus_{i=1}^2 R(-3(\dg,e)+(\mg_i,n_i))^4,$$
	and 
	$$C_4 \simeq R(-4(\dg,e)+2(\mg_1,n_1))\oplus R(-4(\dg,e)+\bm{\sigma}) \oplus R(-4(\dg,e)+2(\mg_2,n_2)).$$

	From here, using the explicit computation of the local cohomology modules of the polynomial ring $R$, see Lemma \ref{lem:locCohmoR}, it is straightforward to check that the claimed regions satisfy both conditions \eqref{eq:H0NH0-cond1} and \eqref{eq:H0NH0-cond2}. Notice that it is assumed that $\bm{d}\geq \bm{m_i}\geq 0$ (component-wise if $X=\PP^1\times \PP^1$) and $e\geq 1$.
\end{proof}

From the proof of Proposition \ref{prop:H0BH0} it is clear that it is possible to list more valid regions than those that are given in the statement where we only provided the regions that are of interest for our application to the computation of orthogonal projection on rational surfaces. It turns out that listing all the possible regions can be a rather technical and cumbersome task. For instance, in the case $X=\PP^2$, we obtain the following list of eight quadrants $Q_{i.j.k}$ for which the expected property hold:
	\begin{enumerate}
		\item[(i)] $Q_{1.1.1}=\EE(4d-2-\min\{ 2m_1,m_1+m_2,2m_2\},3e-1 -\min \{ n_1+n_2,e\})$,
		\item[(ii)] $Q_{1.1.2}=\EE(4d-2-\min\{ 2m_1,m_1+m_2\},4e-1 -\min\{2n_2,n_1+n_2+e,2e\})$,
		\item[(iii)] $Q_{1.2.2}=\EE(4d-2-\min\{2m_1,m_1+2m_2,d+m_2\},4e-1 -\min\{n_1+n_2,2n_2\})$,
		\item[(iv)] $Q_{1.2.1}=\EE(4d-2-\min\{ 2m_1,2m_2\},4e-1 -(n_1+n_2))$,
		\item[(v)] $Q_{2.1.1}=\EE(4d-2-\min\{ m_1+m_2,2m_2\},4e-1 -2n_1)$,
		\item[(vi)] $Q_{2.1.2}=\EE(4d-2-(m_1+m_2),4e-1 -\min\{2n_1,2n_2\})$,
		\item[(vii)] $Q_{2.2.1}=\EE(4d-2-\min\{2m_2,,2m_1+m_2,d+m_1\},4e-1 -\min\{2n_1,n_1+n_2\})$,
		\item[(viii)] $Q_{2.2.2}=\EE(4d-2 -\min\{ m_1+2m_2,2m_1+m_2,d+m_1,d+m_2\},$\\
		         \mbox{ } \hfill $4e-1 -\min\{2n_1,n_1+n_2,2n_2\})$.
	\end{enumerate}
For the sake of completeness, we also mention that in the special case where $g_1$ has degree $(m,0)$ and $g_2$ has degree $(0,1)$, which is important for our targeted application, the union of these eight quadrants is equal to the union of two quadrants $Q_{1.1.1}$ and $Q_{1.2.1}$, which is equal to
	$$
	\EE (4d-2,3e-2)\cup \EE (4d-m-2,4e-2).
	$$	
The case $X=\PP^1\times\PP^1$ give rise to too many regions so that they can be listed exhaustively in this paper.

\medskip

\noindent {\it Proof of Theorem \ref{thm:mainresidual}.}  The proofs of Theorem \ref{thm:regstabilization}, and hence Theorem \ref{thm:main}, apply verbatim with the exception of the control of the vanishing of the module $H^0_B(H_0)_{(\mug,\nu)}$ that appears in \eqref{eq:HOBHOexactseq}. Indeed, instead of relying on Lemma \ref{lem:H2BH1} (and Proposition \ref{prop:vanishHiBHj}) to obtain regions where this module vanishes, we apply Proposition \ref{prop:H0BH0}. As we are still using Lemma \ref{lem:H2BH2}, it follows that the claimed region are obtained by intersecting the regions obtained in Theorem \ref{thm:mainresidual} with the additional constrained given by Proposition \ref{prop:H0BH0}. 
To be more precise, if $X=\PP^2$ then $(\mu,\nu)$ must satisfy to \eqref{eq:thmain1-P2} but also must satisfy the additional condition
$$ (\mu,\nu)+(d,e) \in \EE(4d-2-\min\{m_1+m_2,2m_1,2m_2\},3e-1-\kappa).$$
From here one can easily check that the claimed region satisfies these two conditions. The case $X=\PP^1\times\PP^1$ follows exactly in the same way.
\hfill $\qed$

\section{Rings of sections in a product of projective spaces}\label{sec:secRing}

In order to apply Theorem \ref{thm:regstabilization} and Theorem \ref{thm:mainresidual} in the case of a positive dimensional base locus $\Bc$, it is necessary to analyze when the curve component of $\Bc$ has no section in bounded above degrees (see Definition \ref{def:negsec}). In this section, we provide classes of curves that satisfy to this property. Hereafter, a curve is understood to be a scheme purely of dimension one.

\medskip

To begin with, if $Z$ is a product (over the field $k$) of two schemes we  recall the following classical property which is a consequence of the K\"unneth formula.

\begin{lem}\label{lem:secProd} 
	Assume $Z$ is the product of two schemes $Z_1\subseteq \PP_1$ and $Z_2\subseteq \PP_2$, where $\PP_1$ and $\PP_2$ are two products of projective spaces. Then, setting $\PP :=\PP_1\times \PP_2$, for any degree $\mug$ and any degree $\nug$ we have
	$$
	H^0(Z,\mathcal{O}_Z (\mug ,\nug))=H^0(Z_1,\mathcal{O}_{Z_1} (\mug ))\otimes_k H^0(Z_2,\mathcal{O}_{Z_2} (\nug)).
	$$
\end{lem}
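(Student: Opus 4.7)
The plan is to recognize this lemma as the degree zero case of the Künneth formula for the external tensor product of invertible sheaves on the product $Z = Z_1 \times_k Z_2$, and to give a short self-contained argument using only the projection formula and flat base change (since higher cohomology does not appear).

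First I would unwind how the bigraded line bundle $\mathcal{O}_Z(\mug,\nug)$ is defined on the product. Writing $\mathrm{pr}_i : Z \to Z_i$ for the two projections ($i=1,2$), one has by construction the identification
$$
\mathcal{O}_Z(\mug,\nug) \;=\; \mathrm{pr}_1^*\,\mathcal{O}_{Z_1}(\mug)\otimes_{\mathcal{O}_Z} \mathrm{pr}_2^*\,\mathcal{O}_{Z_2}(\nug),
$$
because the Picard group of the ambient product $\PP = \PP_1\times \PP_2$ decomposes as a direct sum induced by the two projections, and the restriction to $Z_1\times Z_2$ is compatible with pullback along the two projections.

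Next I would apply the projection formula to $\mathrm{pr}_1$: since $\mathrm{pr}_1^*\mathcal{O}_{Z_1}(\mug)$ is locally free,
$$
\mathrm{pr}_{1*}\bigl(\mathrm{pr}_1^*\mathcal{O}_{Z_1}(\mug)\otimes \mathrm{pr}_2^*\mathcal{O}_{Z_2}(\nug)\bigr) \;=\; \mathcal{O}_{Z_1}(\mug)\otimes_{\mathcal{O}_{Z_1}} \mathrm{pr}_{1*}\mathrm{pr}_2^*\mathcal{O}_{Z_2}(\nug).
$$
Now $Z_2$ is flat over $\Spec k$, so $\mathrm{pr}_1 : Z_1\times_k Z_2 \to Z_1$ is flat, and the cartesian square formed with $Z_2 \to \Spec k$ gives, by flat base change,
$$
\mathrm{pr}_{1*}\,\mathrm{pr}_2^*\mathcal{O}_{Z_2}(\nug) \;=\; H^0(Z_2,\mathcal{O}_{Z_2}(\nug))\otimes_k \mathcal{O}_{Z_1}.
$$
Combining and taking global sections (and using that $H^0(Z_2,\mathcal{O}_{Z_2}(\nug))$ is a $k$-vector space so it commutes with $H^0(Z_1,-)$) yields the claimed identity
$$
H^0(Z,\mathcal{O}_Z(\mug,\nug)) \;=\; H^0(Z_1,\mathcal{O}_{Z_1}(\mug))\otimes_k H^0(Z_2,\mathcal{O}_{Z_2}(\nug)).
$$

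The only mild obstacle is checking the hypotheses that make flat base change legitimate in the form above, namely that $Z_2 \to \Spec k$ is quasi-compact and separated with the relevant flatness (automatic over a field, and harmless because $Z_2$ is a subscheme of a product of projective spaces). Everything else is bookkeeping. Alternatively, one may simply invoke the Künneth theorem of Kempf (or \cite[III, (6.7.8)]{EGA} type statements) directly and read off the $H^0$ component; I would state this as a one-line alternative for readers who prefer that route.
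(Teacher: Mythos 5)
Your argument is correct and matches the paper's approach: the paper simply cites this as a consequence of the K\"unneth formula, and your projection formula plus flat base change computation is exactly the standard derivation of the $H^0$ part of K\"unneth over a field, so you are proving the same fact by the same route (with the small extra care you rightly note of identifying $\mathcal{O}_Z(\mug,\nug)$ with $\mathrm{pr}_1^*\mathcal{O}_{Z_1}(\mug)\otimes\mathrm{pr}_2^*\mathcal{O}_{Z_2}(\nug)$).
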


From this property, it is interesting to identify classes of subschemes in a single projective space that have no section in negative degrees, because then this provides new classes of subschemes in product of projective spaces with no section in negative degrees by product extensions. 

\medskip

 If $Z$ is a reduced irreducible subscheme in a projective space $\PP^n$ (over a field) of positive dimension, then it is well known that  $\Oc_{Z}(\mu)$ has no non-zero global sections for any integer $\mu<0$. We show that this result extends to a subscheme of a product of projective spaces.

Let $Z$ be a subscheme in a product of projective spaces $\PP:=\PP^{n_1}\times\cdots\times \PP^{n_r}$, $n_i\geq 1$ for all $i$. 
Let $R$ be the standard multi-graded ring defining $\PP$ and let 
$I\subset R$ be the multi-homogeneous defining ideal of $Z$. The ring of sections of $Z$ sits in the exact sequence 
$$
\xymatrix{
	0\ar[r]&H^0_B(R/I)\ar[r]&R/I\ar[r]&\bigoplus_{\mug \in \ZZ^r}H^0(Z,\mathcal{O}_Z (\mug ))\ar[r]&H^1_B(R/I)\ar[r]&0\\
}
$$
where $B$ stands for the ideal generated by $R_{(1,\ldots ,1)}$. 
It shows in particular that $Z$ determines $I_Z=I+H^0_B(R/I)$, the unique ideal saturated with respect to $B$ that defines $Z$, giving a one to one correspondence between $B$-saturated multi-graded ideals and subschemes of $\PP$. 

\begin{prop}\label{prop:vanishred-negsec} 
	With the above notation, assume that $Z$ is reduced with only components of positive dimension, then $H^0(Z,\mathcal{O}_Z (\mug ))=0$ for all $\mug <\bm{0}$. 
\end{prop}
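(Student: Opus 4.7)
The plan is to reduce to the irreducible case and then exploit the basic fact that $H^0(Y,\mathcal{O}_Y)=k$ for an irreducible reduced projective scheme $Y$, combined with the abundance of sections of $\mathcal{O}_\PP(\nug)$ for $\nug>0$.

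\emph{Step 1 (reduction to irreducible).} Write $Z=\bigcup_{i=1}^m Y_i$ where the $Y_i$ are the irreducible components, each endowed with its reduced induced structure. Since $Z$ is reduced, the nilradical is zero and on each affine open the map $\mathcal{O}_Z \to \prod_i \mathcal{O}_{Y_i}$ is injective, as it corresponds locally to the injection $A\hookrightarrow \prod_i A/\mathfrak{p}_i$ into a product over minimal primes. Twisting by $\mathcal{O}_\PP (\mug)$ preserves injectivity and taking global sections is left-exact, so it suffices to prove $H^0(Y_i,\mathcal{O}_{Y_i}(\mug))=0$ for each $i$. Thus I may assume $Z=Y$ is irreducible reduced and of positive dimension.

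\emph{Step 2 (locating a positive-dimensional projection).} Since $Y$ is positive-dimensional, not every coordinate projection $\pi_j\colon Y\to \PP^{n_j}$ can have zero-dimensional image: otherwise $Y$ would be contained in a product of points, contradicting $\dim Y\geq 1$. Reordering, assume $\dim \pi_1(Y)\geq 1$. Then I can choose two linear forms $\ell_1,\ell_1'\in H^0(\PP^{n_1},\mathcal{O}(1))$ such that neither vanishes identically on $Y$ and such that the ratio $\ell_1/\ell_1'$ is a non-constant rational function on $Y$ (taking $\ell_1'$ general with respect to $\ell_1$ works, as the function field of $\pi_1(Y)$ has positive transcendence degree). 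For each $i\geq 2$ pick any $\ell_i\in H^0(\PP^{n_i},\mathcal{O}(1))$ not vanishing identically on $Y$.

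\emph{Step 3 (deriving the contradiction).} Suppose, for contradiction, that $s\in H^0(Y,\mathcal{O}_Y(\mug))$ is nonzero with $\mug<\bm{0}$. Since $-\mu_i>0$ for each $i$, the product
\[
F := s\cdot \ell_1^{-\mu_1}\cdot \prod_{i\geq 2} \ell_i^{-\mu_i}
\]
is a well-defined section of $\mathcal{O}_Y(\mug+(-\mug))=\mathcal{O}_Y$. Similarly
\[
F':= s\cdot (\ell_1')^{-\mu_1}\cdot \prod_{i\geq 2} \ell_i^{-\mu_i}
\]
lies in $H^0(Y,\mathcal{O}_Y)$. Because $Y$ is irreducible reduced and projective, $H^0(Y,\mathcal{O}_Y)=k$, so both $F$ and $F'$ are constants. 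Viewed in the function field $K(Y)$, $s$, $\ell_1$, $\ell_1'$, and the $\ell_i$ are all nonzero, so $F,F'\neq 0$ as elements of $k$. Thus the ratio $F/F'=(\ell_1/\ell_1')^{-\mu_1}\in K(Y)$ is a nonzero constant; but by the choice of $\ell_1,\ell_1'$ and the fact that $-\mu_1>0$, this is a nontrivial power of a non-constant rational function on $Y$, hence non-constant, a contradiction.

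\emph{Main obstacle.} The one place where care is needed is the existence of the projection with positive-dimensional image and the simultaneous choice of two linear forms $\ell_1,\ell_1'$ whose ratio restricts non-trivially to $Y$; this is a mild but essential genericity statement that rests on $\dim Y\geq 1$ and the irreducibility of $Y$ after Step 1. Once that is secured, the rest is just reading off that the multi-graded section ring of $Y$ is an integral domain with degree-zero part equal to $k$.
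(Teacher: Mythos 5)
Your proof is correct, and it takes a genuinely different route from the paper. Both you and the paper begin by reducing to the case where $Z$ is irreducible (your Step 1 is essentially the paper's canonical inclusion into the direct sum of the rings of sections of the components). From there, the paper proceeds cohomologically: it observes that the multi-graded ring of sections is a domain, passes to the $S_2$-ification to invoke Serre duality and re-express $H^0(Z,\mathcal{O}_Z(\mug))$ inside $H^d(Z,\omega_Z(-\mug))$, and then derives a contradiction with Serre's vanishing theorem for the very ample class $-\mug$, using that powers $a^k$ of a nonzero degree-$\mug$ element remain nonzero in a domain. Your argument instead stays at the level of global sections of $\mathcal{O}_Y$: you multiply a hypothetical nonzero $s\in H^0(Y,\mathcal{O}_Y(\mug))$ by products of linear forms to land in $H^0(Y,\mathcal{O}_Y)=k$, and contrast the two products $F,F'$ to show that $(\ell_1/\ell_1')^{-\mu_1}$ would have to be constant. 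What your approach buys is elementarity: no Serre duality, no $S_2$-ification, no Serre vanishing — only the facts that a reduced irreducible projective scheme over an algebraically closed field has $H^0=k$, and that a proper positive power of a non-constant element of $K(Y)$ is non-constant (this last point uses $k$ algebraically closed, which matches the paper's Section 3 hypothesis). The cost is the extra geometric step of locating a projection $\pi_1$ with positive-dimensional image and two linear forms with non-constant ratio on $Y$; the paper's proof avoids any such explicit choices by working entirely in terms of local cohomology and duality, at the price of invoking heavier machinery.
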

\begin{proof}
First, there is a canonical inclusion 
$$H^0(Z,\mathcal{O}_Z (\mug ))\hookrightarrow \bigoplus_{i=1}^t \bigoplus_{\mug \in \ZZ^r}H^0(Z_i,\mathcal{O}_{Z_i} (\mug ))$$
where $Z_i$, for $i=1,\ldots ,t$, are the irreducible components of $Z$. Hence we can, and will, assume that $Z$ is reduced and irreducible.

Notice then that the multi-graded ring  $A=\oplus_{\mug \in \ZZ^r}H^0(Z,\mathcal{O}_Z (\mu ))$ is a domain, as it sits in the fraction field of $A$. Let $d:=\dim Z$. By Serre duality, for any $\mug$ we have  
$$H^0(Z,\mathcal{O}_Z (\mug ))\subseteq H^0(Z,\mathcal{O}_{\tilde Z} (\mug ))\simeq H^d(Z,\omega_Z (-\mug )),$$
where $\tilde Z$ is the $S_2$-ification of $Z$, namely the scheme defined by $\tilde A= {\rm End}(\omega_A)$, which is itself sitting in the integral closure of $A$. In fact $\tilde A=\oplus_\mug H^0(Z,\mathcal{O}_{\tilde Z} (\mug ))$ as $\tilde A$ satisfies $S_2$.

Now, if $H^0(Z,\mathcal{O}_Z (\mug ))\not= 0$ for some $\mug <0$, let $0\not= a\in A_\mug$. As $A$ is a domain, $a^k\not= 0$ for all $k\geq 1$. This implies that $H^d(Z,\omega_Z (-k\mug ))\not= 0$ for any $k$, contradicting Serre vanishing theorem (as $-\mu_1H_1...-\mu_r H_r$ is very ample since $\mu_i<0$ for all $i$).
\end{proof}

\begin{rem} For the sake of completeness, we mention that 
	there exist other classes of projective schemes $Z\subseteq \PP^n$ of positive dimension that have no global section in negative degrees. This is for instance the case in the following situations 
\begin{itemize}
\item  $R/I_Z$ is has depth at least 2, which in turn holds if $I_Z$ is a complete intersection or the power of such an ideal. This condition is also satisfied for large classes of ideal, in particular for determinantal ideals of many types.
\item  $Z$ is scheme defined by an intersection of symbolic powers of prime ideals (with no inclusion between any pair of these). This includes powers of locally complete intersection reduced schemes and so called fat structures on reduced equidimensional schemes.
\end{itemize}
\end{rem}

\section{Computing orthogonal projection of points onto a rational surface }\label{sec:Computation}

In this section, based on our previous results we derive a new method for computing the orthogonal projections of a point $p\in \RR^3$ onto a rational surface $\Sc\in\RR^3$. Suppose that $\Sc$ is parameterized by a map $\Phi:X \dasharrow \PP^3$, as defined in Section \ref{subsec:homphi}, with polynomials of degree $\dg$  and 
consider its associated congruence of normal lines $\Psi: X\times \PP^1 \dashrightarrow \PP^3$, as described in Section \ref{subsec:homparam}. We will compute the orthogonal projections $q_i \in\Sc$, $i=1,\ldots,r_p$, of $p$ by means of the matrices $\MM_{(\mug,\nu)}(\Psi)$, defined in Section \ref{subsec:matrixrep}. The degree of the defining polynomials of $\Psi$  will be denoted by $(\bm{\delta},1)$; their values in terms of the type of $\Phi$ are given in  Table \ref{tab:degreePsi}. 

In what follows, we assume that $d\geq 2$ if $X=\PP^2$ and that $d_1\geq 1$ and $d_2\geq 1$ if $X=\PP^1\times \PP^1$.

\subsection{Matrix representations of linear fibers}\label{subsec:MrepOfLinearFibers}
Consider the family of matrices $\MM_{(\mug,\nu)}$ associated to $\Psi$ that we introduced in Section \ref{subsec:matrixrep}. In Section \ref{sec:mrepsurface} it is proved under suitable assumptions that the corank of $\MM_{(\mug,\nu)}(p)$, $p$ a point in $\PP^3$, gives a computational representation of the linear fiber $\Lk_p$ of $p$, providing that $\Lk_p$ is finite and $(\mug,\nu)$ satisfies to some conditions. Thus, the matrix $\MM_{(\mug,\nu)}$ provides a universal matrix-based  representation of the finite linear fibers of $\Psi$.

\subsubsection{Admissible degrees}
%In terms of computational efficiency it is important to choose the degree $(\mug,\nu)$ such that the matrix $\MM_{(\mug,\nu)}$ is as small as possible. 
Recall that $\Phi$ is defined by polynomials of degree $\bm{d}$ over $X$ and that its congruence of normal lines $\Psi$ is defined by polynomials of degree $(\bm{\delta},1)$ over $X\times \PP^1$. \blue{We also recall that the notation $\EE(-)$ has been previously introduced in Notation \ref{not:E}.}

\begin{cor}\label{cor:adminssibledegree} For a general parameterization $\Phi$ and a degree $(\mug,\nu)$, the corresponding matrix $\MM_{(\bm{\mu},\nu)}(\Psi)$ yields a matrix representation of the finite linear fibers of $\Psi$ providing
	\begin{align}\label{eq:mu0}
	  \bullet \ &  (\mu,\nu)\in \EE(3\delta-2,0) \textrm{ if } X=\PP^2,  \\ \nonumber
	   \bullet \ & (\mu_1,\mu_2,\nu)\in \EE(3\delta_1-1,2\delta_2-1+d_2,0) \textrm{ or } \\ \nonumber
	   & (\mu_1,\mu_2,\nu)\in \EE(2\delta_1-1+d_1, 3\delta_2-1,0) \textrm{ if } X=\PP^1\times\PP^1.  
	\end{align}
\end{cor}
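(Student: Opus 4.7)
The plan is to derive the statement as a direct specialization of Theorem~\ref{thm:mainresidual} to the congruence of normal lines $\Psi$ described in Section~\ref{subsec:homparam}, using Lemma~\ref{lem:baselocus} to identify the curve component $\Cc$ of the base locus as an explicit complete intersection. Concretely, I would set $J=(g_1,g_2)$ with $g_1$ the monomial in the $X$-variables provided by Lemma~\ref{lem:baselocus} and $g_2=t$; since these use disjoint variables, $(g_1,g_2)$ is a regular sequence and $\Cc=V(J)$ is a complete intersection in $X\times\PP^1$. Inspecting formulas \eqref{eq:PsiHomP2} and \eqref{eq:PsiHomP1P1} shows $I\subset J$, and the matrix argument in the proof of Lemma~\ref{lem:baselocus} implies that the residual $(I:J)$ defines a zero-dimensional subscheme, as required by Theorem~\ref{thm:mainresidual}.

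Next, I would verify the ``no section in degree $<(\bm{0},1)$'' hypothesis. Since $g_1$ and $g_2$ use disjoint variable sets, $\Cc$ decomposes as a product $V(g_1)\times V(g_2)$, so by Lemma~\ref{lem:secProd} it suffices to show $H^0(V(g_1),\Oc(\bm{\alpha}))=0$ for $\bm{\alpha}<\bm{0}$. As $(g_1)$ is principal in the Cohen--Macaulay ring $R_X$, the quotient $R_X/(g_1)$ has depth at least two with respect to $B_X$; the long exact sequence in local cohomology associated to $0\to R_X(-\mg_1)\to R_X\to R_X/(g_1)\to 0$, together with the explicit description of $H^\bullet_{B_X}(R_X)$ from Lemma~\ref{lem:locCohmoR}, shows that $H^0_{B_X}(R_X/(g_1))=H^1_{B_X}(R_X/(g_1))=0$ in the relevant degrees, so sections coincide with the coordinate ring and vanish in strictly negative multi-degree.

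Finally, I would substitute the specific parameters into Theorem~\ref{thm:mainresidual}. With $e=1$, $n_1=0$, $n_2=1$ one gets $\eta=0$. For $X=\PP^2$ one has $\min(m_1,m_2)=0$, so \eqref{eq:thmain2-P2} collapses to $\EE(3\delta-2,0)\cup\EE(3\delta-2,2)=\EE(3\delta-2,0)$, matching \eqref{eq:mu0}. For $X=\PP^1\times\PP^1$, a short computation using $m_{2,i}=0$ and the explicit $m_{1,i}$ from Lemma~\ref{lem:baselocus} gives $\tau_i=d_i$ uniformly in both the rational and non-rational cases; the first two of the three regions in \eqref{eq:thmain2-P1P1} then match the two regions in \eqref{eq:mu0} (the third, requiring $\nu\geq 2$, is simply omitted).

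The main technical obstacle will be the second step in the bi-projective case, where $V(\u^{\alpha_1}\v^{\alpha_2})$ is non-reduced and supported on a cross of two lines of different bi-degrees, so Proposition~\ref{prop:vanishred-negsec} does not apply directly; handling it uniformly as a principal hypersurface in the Cohen--Macaulay ring $R_X$ and invoking Lemma~\ref{lem:locCohmoR} sidesteps the issue. The rest is bookkeeping of multi-degrees.
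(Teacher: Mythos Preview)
Your proposal is correct and follows essentially the same route as the paper: identify $\Cc$ as the explicit complete intersection of Lemma~\ref{lem:baselocus}, verify the no-section hypothesis via Section~\ref{sec:secRing}, and then substitute $e=1$, $(\mg_1,n_1)=(\bm{\delta}-\bm{d},0)$, $(\mg_2,n_2)=(\bm{0},1)$ into Theorem~\ref{thm:mainresidual} to obtain $\eta=0$ and $\tau_i=d_i$. The paper's proof is terser than yours, simply citing Section~\ref{sec:secRing} for the no-section step; your treatment of that step via the long exact sequence of $0\to R_X(-\mg_1)\to R_X\to R_X/(g_1)\to 0$ is the right way to make this precise in the biprojective case where $V(g_1)$ is non-reduced and Proposition~\ref{prop:vanishred-negsec} does not directly apply (note only that Lemma~\ref{lem:locCohmoR} is stated for $R=R_X\otimes R_1$, so strictly speaking you need the analogous, simpler, description of $H^\bullet_{B_X}(R_X)$).
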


\begin{proof} 
The base locus $\Bc$ of $\Psi$ is of positive dimension and we denote by $\Cc$ its unmixed curve component. Since $\Phi$ is a general parameterization, Lemma \ref{lem:baselocus} shows that $\Cc$ is a complete intersection curve defined by an ideal $J=(g_1,g_2)$ such that $\deg(g_1)=(\bm{\delta}-\bm{d},0)$ and $\deg(g_2)=(\bm{0},1)$. Then, from the results given in Section \ref{sec:secRing} we deduce that $\Cc$ has no section in degree $< (\bm{0},1)$. Therefore, Theorem \ref{thm:mainresidual} applies under our assumption. To recover the claimed bounds for the integers $(\bm{\mu},\nu)$, we observe that in our setting we have $e-n_1-n_2=1-0-1=0$, which proves the case $X=\PP^2$. If $X=\PP^1\times \PP^1$, then we have $\tau_i=\delta_i-(\delta_i-d_i)=d_i$, which concludes the proof. 
\end{proof}

Notice that we actually proved that the above corollary holds for any parameterization $\Phi$ such that its corresponding congruence of normal lines $\Psi$ satisfies the assumptions of Theorem \ref{thm:regstabilization} or Theorem \ref{thm:mainresidual}. 

\medskip

From a computational point of view, the fact that $\nu$ can be chosen to be equal to zero is extremely important and justifies the theoretical developments in Section \ref{sec:proofmainthm}. Indeed, since $\nu=0$ the cokernel of the corresponding matrix is defined solely on $X$, and not on $X\times \PP^1$. As a consequence, the orthogonal projections can be computed directly on the surface without computing their positions on normal lines (see \S \ref{subsec:comporthproj}). In Table \ref{tab:admissibledegree} we give the precise value of the lowest admissible degree, denoted $\bm{\mu}_0$, in terms of the type of surface parameterized by $\Phi$.

\begin{table}[ht]
\centering	
\renewcommand{\arraystretch}{1.5}
\begin{tabular}{|c|c|c|}  
	\hline 
	$\bm{\mu}_0$ &  Triangular surface &  Tensor-product surface \\ \hline
	Non-rational & $6d-8$ & $(6d_1-4,5d_2-3)$ or $(5d_1-3,6d_2-4)$ \\ \hline 
	Rational & $9d-11$ & $(9d_1-7,7d_2-5)$ or $(7d_1-5,9d_2-7)$ \\  \hline
\end{tabular}
\caption{Lowest admissible degrees $\bm{\mu}_0$ for building matrix representations depending on the type of surfaces, namely non-rational/rational triangular/tensor-product surfaces.}
\label{tab:admissibledegree}
\end{table}
The above theoretical results lead us to use in practice the matrix $\MM_{(\mug_0,0)}$ as a matrix representation of the congruence of normal lines. In what follows we will denote this matrix by $\MM$ for simplicity. We recall that its entries are linear forms in $k[x_0,\ldots,x_3]$, so that 
$\MM=\sum_{i=0}^3 x_i\MM_i$ where $\MM_i$ are matrices with entries in $k$.

\subsubsection{Computational aspects} The computation of $\MM$, equivalently of the matrices $\MM_0,\ldots,\MM_3$, amounts to solve the linear system formed by the syzygies of $\Psi_0,\ldots,\Psi_3$ of degree $(\bm{\mu}_0,0)$ (see Table \ref{tab:admissibledegree}). If the parameterization $\Phi$ is given with exact coefficients, i.e.~if $k=\QQ$, then the computations can be performed over $\QQ$, otherwise they are done with floating point numbers (using the MPFR library, with a precision of 53 bits) which means that there is no certification on the results as we are dealing with numerical approximations in the computations; in what follows we summarize this setting but writing $k=\RR$. \blue{Nevertheless, we emphasize that in the case $\Phi$ is given with exact coefficients, the computation of $\MM$ with floating numbers, using the powerful singular value decomposition, provides an accurate and robust numerical approximation of its exact counterpart; we refer the interested reader to \cite[\S 5.2]{Bus14} for precise statements and more details.}

For a general rational cubic surface, $\MM$ can be computed in about 28s over $\QQ$ and about 0.9s over $\RR$. We notice that this gap in the computation time is expected because of the growth of the heights of matrix entries over $\QQ$ (see for instance \cite[\S 1]{krick2001} for more details on this topic). For a general rational bi-quadratic surface, $\MM$ is computed in about 31s over $\QQ$ whereas it is computed in about 0.16s over $\RR$. More results are given in Table \ref{tab:MrepTriSurf} and Table \ref{tab:MrepTensorProSurf}. These computations have been made with the software {\sc Macaulay2} for the computations over $\QQ$, and the software {\sc SageMath} for the computations over $\RR$.
\begin{table}[ht]
	\centering	
	\renewcommand{\arraystretch}{1}
\begin{tabular}{|c|crr|crr|}
	\hline
 & \multicolumn{3}{c|}{non-rational} & \multicolumn{3}{c|}{rational} \\
\cline{2-7} 
   & \small{matrix} &\small{time (ms)} &  \small{time (ms)} & \small{matrix}  &\small{time (ms)} & \small{time (ms)}   \\      
  \small{$\deg(\Phi)$}&\small{size} & \small{over $\RR$} &  \small{over $\QQ$}  & \small{size} &\small{over $\RR$} & \small{over $\QQ$}  \\ 
  \hline \hline 
 $2$ & $15\times 7$  & 3 & 19 &  $36\times 29$ & 15 & 267   \\
 $3$ & $66\times 51$ & 42 & 887  &  $153 \times 150$ & 301 & 28090  \\
 $4$ & $153\times 132$ & 315 & 32473 & $351 \times 363$ & 2952 &   \\
\hline \hline
\end{tabular}
	\caption{Size and computation time in milliseconds (rounded to the millisecond) of $\MM$ over $\RR$ and $\QQ$ of non-rational and rational triangular surfaces.}
	\label{tab:MrepTriSurf}
\end{table}
\begin{table}[ht]
	\centering	
	\renewcommand{\arraystretch}{1}
\begin{tabular}{|c|crr|crr|}
			\hline
		& \multicolumn{3}{c|}{non-rational} & \multicolumn{3}{c|}{rational} \\
		\cline{2-7} 
		& \small{matrix} &\small{time (ms)} &  \small{time (ms)} & \small{matrix}  &\small{time (ms)} & \small{time (ms)}   \\      
		\small{$\deg(\Phi)$}&\small{size} & \small{over $\RR$} &  \small{over $\QQ$}  & \small{size} &\small{over $\RR$} & \small{over $\QQ$}  \\ 
		\hline \hline 
		$(1,1)$ & $9\times 5$  & 1 & 6    & $9\times 4$ & 1 & 8  \\
		$(1,2)$ & $24\times 16$  & 4 &  32  &  $30\times 20$ & 7 & 125 \\
		$(1,3)$ & $39\times 27$ & 12 &  136  & $51 \times 36$ & 21 &  1082 \\
		$(2,2)$ & $72\times 59$ & 44 &  1460  & $120 \times 108$ & 157&  31182 \\
		$(2,3)$ & $117\times 98$ & 142 &  10867  &  $204 \times 188$& 663 &   \\
		$(3,3)$ & $195\times 169$ & 575 & 96704  & $357 \times 340$ & 3353 &  \\
		\hline \hline
	\end{tabular}
	\caption{Size and computation time in milliseconds (rounded to the millisecond) of $\MM$ over $\RR$ and $\QQ$ of non-rational and rational tensor\blue{-}product surfaces in given degrees.}
	\label{tab:MrepTensorProSurf}
\end{table}

Finally, we emphasize that the corank of $\MM(p)$ for a general point $p$ and a general surface parameterization $\Phi$, in which case the linear fiber $\Lk_p$ and the fiber $\Fk_p$ coincide, is equal to the Euclidean distance degree of $\Phi$, as already mentioned in \S \ref{subsec:homphi}. Thus, our method provides a numerical approach for computing the Euclidean distance degree of the algebraic rational surface parameterized by $\Phi$ (see Table \ref{tab:TimingInversionTri} and Table \ref{tab:TimingInversionTP}).

\subsection{Computation of the orthogonal projections}\label{subsec:comporthproj} 
Given a surface parameterization $\Phi$, the matrix $\MM$ is computed only once and stored. It provides a universal representation of the finite linear fibers of the corresponding congruence of normal lines $\Psi$. More precisely, given a point $p=(p_0:p_1:p_2:p_3)\in \RR^3$, the cokernel of $\MM(p)=\sum_{i=0}^3 p_i\MM_i$ gives a linear representation of the linear fiber $\Lk_p$. Thus, if this fiber is finite then classical methods allow to compute the pre-images of $p$ via $\Psi$ by means of numerical linear algebra techniques such as singular value decompositions and eigen-computations (see for instance
 \cite{CLO98,Cox05,BKM05,DREESEN20121203}, or \cite{MC,Bus14,Shen:2016:LNS:3045889.3064444} in a setting which is similar to ours). In what follows, we describe the main steps of an algorithm for computing these orthogonal projections. It is essentially an adaptation to our context of the inversion algorithm \cite[\S 2.4]{Shen:2016:LNS:3045889.3064444} where similar matrices are used for the computation of the intersection points between 3D lines and trimmed NURBS surfaces.  

\vspace{1em}
\hrule
\vspace{.5em}
\noindent{\bf Input:} A point $p\in \RR^3$, a matrix representation $\MM$ of $\Psi$ (over $\RR$) and a numerical tolerance $\epsilon$ (default value is equal to $10^{-8}$).

\medskip

\noindent{\bf Output:} The affine parameters $(u_i,v_i)$ of $\phi$, $i=1,\ldots,l$, of the points $q_i$ that are the orthogonal projections of $p$ onto $\Sc$.
\vspace{.5em}
\hrule
\vspace{.5em}

\noindent{\bf Step 1:} Evaluate $\MM$ at the point $p\in\RR^3$ and denote by $r$, respectively $c$, its number of rows, respectively columns. 

\medskip

\noindent{\bf Step 2:} Compute the approximate cokernel of $\MM(p)$ via a singular value decomposition $\MM(p)=USV^T$: inspecting the singular values, the numerical rank  $s$ of $\MM(p)$ is obtained within the tolerance $\epsilon$. Then, the submatrix $K$ of $U$ corresponding to the last $k:=r-s$ rows is a $k\times r$-matrix whose rows form a basis of an approximate cokernel of $\MM(p)$. By construction, the columns of $K$ are indexed by the polynomial basis used in the computation the $\MM$. For simplicity, we assume that this basis, denoted $B$, is the following one:
\begin{align*}
& \textrm{if } X=\PP^2,  B:=\{ 1,u,u^2,\ldots,u^{\mu},v,vu,vu^2,\ldots,vu^{\mu},\ldots,v^{\mu} \}, \\
& \textrm{if } X=\PP^1\times\PP^1, B:=\{1,u,u^2,\ldots,u^{\mu_1},v,vu\ldots,vu^{\mu_1}\ldots,v^{\mu_2},v^{\mu^2}u,\ldots,v^{\mu_2} u^{\mu_1}\},
\end{align*}
where $\mu$ or $(\mu_1,\mu_2)$ are the chosen degrees to build $\MM$. 

\medskip

\noindent{\bf Step 3:} The matrix $K^T$ being of rank $k$, we extract a full-rank $k\times k$ - submatrix $M_1$ from $K^T$ (for instance by means of a LU-decomposition). Its rows are indexed by an ordered subset $B\textprime$ of $B$. Then, we choose another submatrix $M_2$ of $K^T$ corresponding to the rows indexed by the ordered set $v\times B\textprime$ (multiplication is member-wise). In case $v.B\textprime$ is not contained in $B$ then $\MM$ has to be rebuilt by increasing by one the degree with respect to $u$ used to build $\MM$ and we go back to Step 1. 

\medskip

\noindent{\bf Step 4:} Compute the generalized eigenvalues and eigenvectors of the pencil of matrices $M_1-vM_2$; among these generalized eigenvalues the $v$-coordinates of the orthogonal projections of $p$ are found. Then we filter them to keep those eigenvalues $v_1,\ldots,v_l$ that are real numbers and that are contained in the parameter domain of interest, typically $[0;1]$ (within a given tolerance). 

\medskip

\noindent{\bf Step 5:} For each generalized eigenvalue $v_i$, $i=1,\ldots,l$, extract from its corresponding generalized eigenvector the $u$-coordinate $u_i$ of a pre-image point of $p$ under $\Psi$, which is done by  computing the ratio of the two first coordinates of this eigenvector (so that the corresponding ratio of monomials in $B$ is equal to $u$). Finally, if $u_i \in [0;1]$, check\footnote{This last check is necessary because of the existence of ``ghost points" in the cases where the linear fiber is not the fiber.} 
if the point $\phi(u_i,v_i)$ in an orthogonal projection of $p$ on $\Sc$ within the tolerance $\epsilon$. If this is the case, then return this point as an orthogonal projection of $p$ on the surface $\Sc$. 

{
\begin{exmp}\label{ex:segre} We take again Example \ref{ex:segreMrep} and compute the orthogonal projections of the point  $p=(0,0,2)\in \RR^3$ onto \red{the Segre surface in $\PP^3$ paramterized by $\Phi$}. At the point $p$, we have $\corank(\MM_{(2,2)}) (p)=5$, which is equal to \red{the EDdegree of a non-rational bilinear tensor-product surface (see Table \ref{tab:EDdeg})}. The computation of the cokernel of $\MM_{(2,2)}(p)$ returns the following matrix

{\small
	  $$K:=\begin{pmatrix}
      2&0&0&0&1&0&0&0&0\\
      0&2&0&{-1}&0&1&0&0&0\\
      {-1}&0&1&0&0&0&1&0&0\\
      0&{-1}&0&2&0&0&0&1&0\\
      0&0&0&0&0&0&0&0&1\end{pmatrix}$$
}

\noindent whose columns are indexed by the monomial basis $B:=\{u^2v^2,u^2v,u^2,uv^2,uv,u,v^2,v,1\}.$ 
According to the shape of $B$, we define $M_1$ as the {first} 5 rows of $K^T$ and $M_2$ as the last 5 rows, except the very last one, of $K^T$. Then solving for the eigenvectors of the pencil $(M_2,M_1)$ with {\sc Macaulay2} (via the command \verb|eigenvectors(M2)| since $M_1$ is the identity matrix in this case) we get the following list of eigenvalues 
$$\{ 0, {1.73205}\,\mathbf{i}, {-{1.73205}\,\mathbf{i}}, -{1}, {1} \}$$
and their corresponding eigenvectors sorted by columns
{\small
$$\begin{pmatrix}
      {0}&{.6}&{.6}&{.447214}&{.447214}\\
      {0}&-{2.04112}\cdot 10^{-16}+{.34641}\,\mathbf{i}&-{2.04112}\cdot 10^{-16}-{.34641}\,\mathbf{i}&{-{.447214}}&{.447214}\\
      {0}&-{.6}-{5.98286}\cdot 10^{-16}\,\mathbf{i}&-{.6}+{5.98286}\cdot 10^{-16}\,\mathbf{i}&{.447214}&{.447214}\\
      {0}&{8.63551}\cdot 10^{-17}-{.34641}\,\mathbf{i}&{8.63551}\cdot 10^{-17}+{.34641}\,\mathbf{i}&{-{.447214}}&{.447214}\\
      1&{.2}+{1.17844}\cdot 10^{-16}\,\mathbf{i}&{.2}-{1.17844}\cdot 10^{-16}\,\mathbf{i}&{.447214}&{.447214}\end{pmatrix}.$$
}

\noindent From here, we see that there are three real orthogonal points whose $u$-parameter values are $u_1=0.0, u_2=-1.0$ and $u_3=1.0$. Then, using the two last rows of the eigenvectors matrix, we deduce their $v$-parameter values: $v_1=0/1=0.0$, $v_2=-.447214/.447214=-1.0$ and $v_3=.447214/.447214=1.0$. The real orthogonal projections of $p$ on the Segre surface are then obtained as $\Phi(u_i,v_i)$, $i=1,2,3$. These computations are illustrated with Figure \ref{fig:segrePicture}.
\begin{figure}[!h]
    \centering
	\includegraphics[width=.25\linewidth]{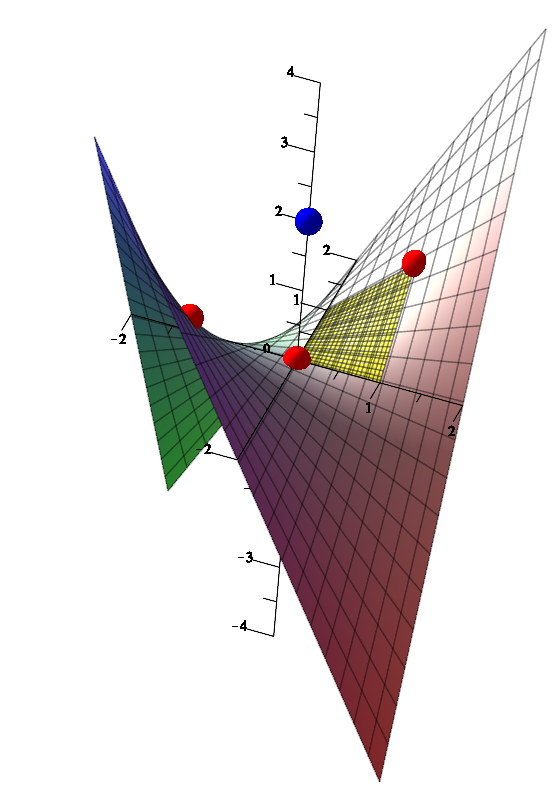}
	\caption{Orthogonal projections (red color) of the affine point $(0,0,2)$ (blue color) on the Segre variety. The surface patch corresponding to the parameter domain $(u,v)\in [0;1]^2$ is highlighted in yellow.}
	\label{fig:segrePicture}
\end{figure}
\end{exmp}
}

\subsection{Experiments}
The algorithm described in Section \ref{subsec:comporthproj} has been implemented in the software {\sc SageMath}. In Table \ref{tab:TimingInversionTP}, respectively Table \ref{tab:TimingInversionTri}, we report on the computation time to inverse a general point of a general non-rational and rational tensor\blue{-}product, respectively triangular surfaces. All the computations are done approximately and it is assumed that the matrix representation $\MM$ has already been computed and stored. 
\begin{table}[ht]
	\centering	
	\renewcommand{\arraystretch}{1}
\begin{tabular}{|c|crr|crr|}
	\hline
	& \multicolumn{3}{c|}{non-rational} & \multicolumn{3}{c|}{rational} \\
	\cline{2-7} 
	$\deg(\Phi)$ & \small{size} &\small{EDdeg} &  \small{time (ms)} & \small{size}  &\small{EDdeg} & \small{time (ms)}   \\      
	\hline \hline 
		$2$ & $15\times 7$ & 9 & 3 &  $36\times 29$ &13 & 4  \\
		$3$ & $66\times 51$ & 25 & 5 &  $153 \times 150$ & 39 & 13   \\
		$4$ & $153\times 132$ & 49 & 18 & $351 \times 363$ &79 & 113  \\
		\hline \hline
	\end{tabular}
	\caption{Size and computation time in \blue{milliseconds (rounded to ms)} of the inversion of $\MM(p)$ of non-rational and rational triangular surfaces.}
	\label{tab:TimingInversionTri}
\end{table}
\begin{table}[ht]
	\centering	
	\renewcommand{\arraystretch}{1}
\begin{tabular}{|c|crr|crr|}
	\hline
	& \multicolumn{3}{c|}{non-rational} & \multicolumn{3}{c|}{rational} \\
	\cline{2-7} 
	$\deg(\Phi)$ & \small{size} &\small{EDdeg} &  \small{time (ms)} & \small{size}  &\small{EDdeg} & \small{time (ms)}   \\      
	\hline \hline 
 $(1,1)$ & $9\times 5$  & 5 & 2& $9\times 4$  &6 & 1 \\
 $(1,2)$ & $24\times 16$  &11 & 2 &  $30\times 20$  &14 & 2 \\
 $(1,3)$ & $39\times 27$ &17 & 3 & $51 \times 36$ & 22 & 3  \\
 $(2,2)$ & $72\times 59$ &25 & 4 & $120 \times 108$ & 36 & 10 \\
 $(2,3)$ & $117\times 98$ &39 & 14&   $204 \times 188$ & 58 & 29  \\
 $(3,3)$ & $195\times 169$ &61 & 76& $357 \times 340$ &  94 &  137  \\
\hline \hline
\end{tabular}
	\caption{Size and computation time in \blue{millisecond (rounded to ms)} of the inversion of $\MM(p)$ of non-rational and rational tensor{\color{red}-}product surfaces.}
	\label{tab:TimingInversionTP}
\end{table}

The method we introduced is particularly well adapted to problems where intensive orthogonal projection computations have to be performed on the same geometric model \blue{(e.g.~surface intersections, medial axis constructions, etc; we refer to \cite[Chapter 7]{PaMa02} and the references therein)}, because one can take a great advantage of the pre-computation of the matrix representation $\MM$. Indeed this matrix allows to rely on powerful and robust numerical tools of linear algebra; see for instance Figure \ref{fig:singSurface} where orthogonal projections are close to the self-intersection locus of the surface. 

Finally, we mention that the method proposed by Thomassen et al.~\cite{TJD05} is also based on the use of congruences of normal lines, but on the algebraic side they use high degree equations of the Rees algebra associated to the defining polynomials of the congruence map (called \emph{moving surfaces}) (see \cite[\S 3]{TJD05}), which make the computations heavy in terms of time and memory. In our approach, \blue{We overcome this difficulty using the results} in Section \ref{sec:proofmainthm} that allows us to use low degree syzygies, i.e.~equations of the above Rees algebra that are \emph{linear} in the space variables (i.e.~\emph{moving planes}, and not moving surfaces of high degree in the space variables).

\begin{figure}[!htb]
    % \centering
    % \begin{minipage}{.5\textwidth}
        \centering
	     \includegraphics[width=.3\linewidth]{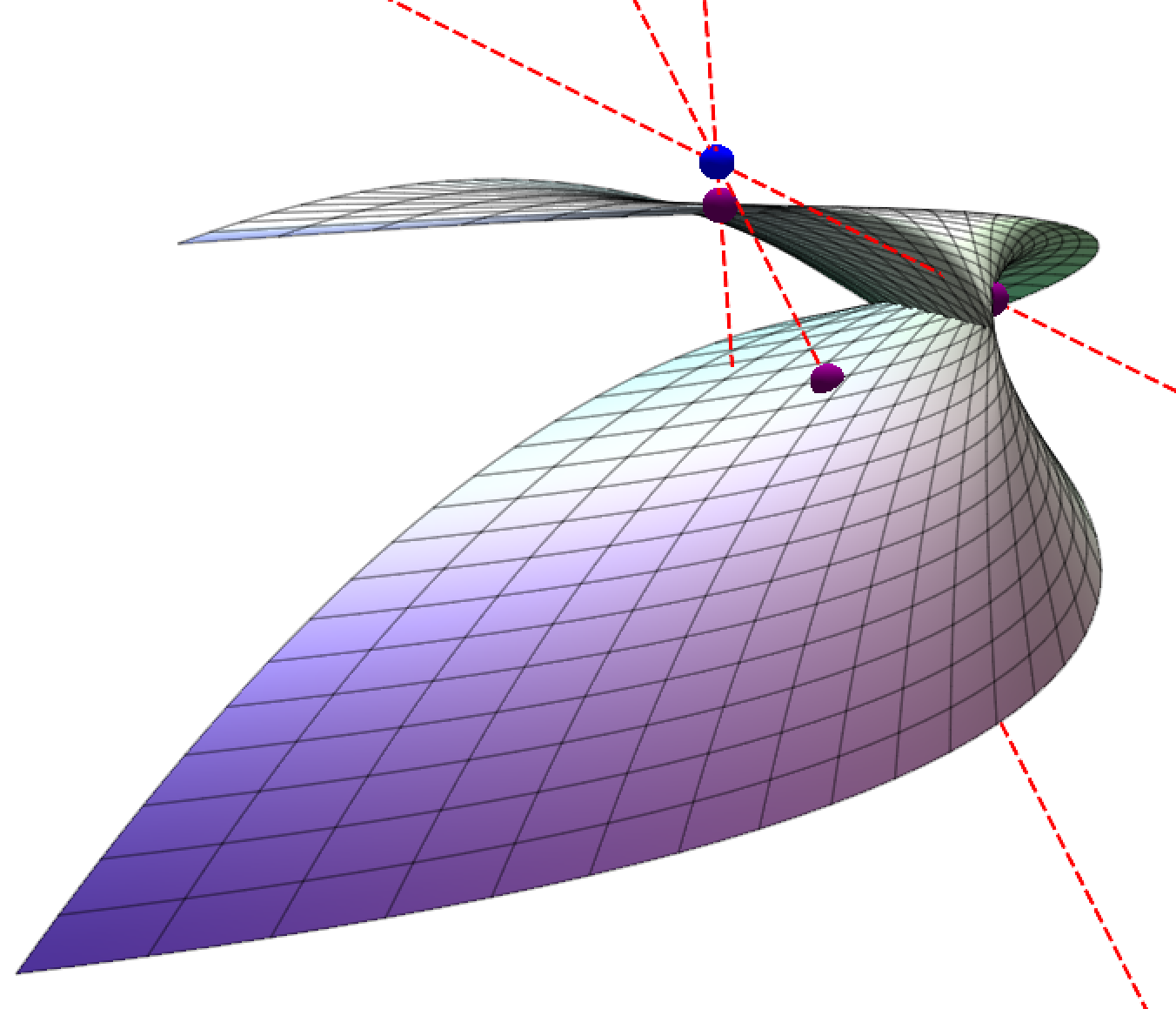}
	     \caption{Orthogonal projections of a point onto a non-rational bi-quadratic surface patch close to its self-intersection locus.}
	   \label{fig:singSurface}
    % \end{minipage}%
    % \begin{minipage}{0.5\textwidth}
	   %         \centering
	   %   \includegraphics[width=.8\linewidth]{img/bidegree22R.png}
	   %   \caption{Orthogonal projections of a point onto a bi-quadratic rational tensor-product B\'ezier surface patch.}
	   % \label{fig:biquad}
	   %     \end{minipage}
\end{figure}

\section*{Acknowledgement}
This project has received funding from the European Union’s Horizon $2020$ research and innovation programme under the Marie Sk\l odowska-Curie grant agreement No $675789$. Thanks also go to the authors and contributors of the software {\sc Macaulay2} \cite{M2} that has been very helpful to compute many \red{enlightening} examples in this work.

\end{document}